\newtheorem{thm}{Theorem}[section]
\newtheorem{prop}[thm]{Proposition}
\newtheorem{lem}[thm]{Lemma}
\newtheorem{cor}[thm]{Corollary}
\newtheorem{definition}[thm]{Definition}
\newtheorem{remark}[thm]{Remark}
\newcommand{\R}{\mathbb{R}}
\newcommand{\Z}{\mathbb{Z}}
\newcommand{\C}{\mathbb{C}}
\newcommand{\op}{\mathcal{O}}
\newcommand{\cl}{\mathcal{C}}
\newcommand{\open}{\mathcal{O}}
\newcommand{\LSO}{\mathcal{L}\mathrm{Sim}_0}
\newcommand{\bbS}{\mathbb{S}}
\newcommand{\M}{\mathcal{M}}
\newcommand{\Gr}{\mathrm{Gr}_2(\mathcal{V})}
\newcommand{\St}{\mathrm{St}_2(\mathcal{V})}
\newcommand{\lSt}{\mathrm{St}_2(\mathcal{L}\C)}
\newcommand{\lGr}{\mathrm{Gr}_2(\mathcal{L}\C)}
\newcommand{\diff}{\mathrm{Diff}^+(S^1)}
\newcommand{\qi}{\textbf{i}}
\newcommand{\qj}{\textbf{j}}
\newcommand{\qk}{\textbf{k}}
\newcommand{\ls}{\mathcal{L}\bbS^1/\bbS^1}
\newcommand{\dif}{\mathrm{Diff}}
\newcommand{\Dif}{\mathrm{Diff}^+(S^1)}
\begin{document}

\title{K\"{a}hler structures on spaces of framed curves}

\author{Tom Needham}

\address{Department of Mathematics, The Ohio State University}

\email{needham.71@osu.edu}

\begin{abstract}
We consider the space $\mathcal{M}$ of Euclidean similarity classes of framed loops in $\mathbb{R}^3$. Framed loop space is shown to be an infinite-dimensional K\"{a}hler manifold by identifying it with a complex Grassmannian. We show that the space of isometrically immersed loops studied by Millson and Zombro is realized as the symplectic reduction of $\mathcal{M}$ by the action of the based loop group of the circle, giving a smooth version of a result of Hausmann and Knutson on polygon space. The identification with a Grassmannian allows us to describe the geodesics of $\M$ explicitly. Using this description, we show that $\mathcal{M}$ and its quotient by the reparameterization group are nonnegatively curved. We also show that the planar loop space studied by Younes, Michor, Shah and Mumford in the context of computer vision embeds in $\mathcal{M}$ as a totally geodesic, Lagrangian submanifold. The action of the reparameterization group on $\mathcal{M}$  is shown to be Hamiltonian and this is used to characterize the critical points of the weighted total twist functional.
\end{abstract}

\maketitle

%%%%%%%%%%%%%%%%%%%%%%%%%%%%%%%%%%%%%%%%%%%%%%%%%%%%%%%%%%%%%%%%%%%
\section{Introduction}
%%%%%%%%%%%%%%%%%%%%%%%%%%%%%%%%%%%%%%%%%%%%%%%%%%%%%%%%%%%%%%%%%%%

Let $\mathrm{Pol}(n,\vec{r})$ denote the space of $n$-edge polygons in $\R^3$ with fixed edgelengths given by $\vec{r}=(r_1,\ldots,r_n)$, where two polygons are identified if they differ by a rigid motion. In \cite{kapovich}, Kapovich and Millson realize $\mathrm{Pol}(n,\vec{r})$ as the symplectic reduction of a product of 2-spheres by the diagonal action of $\mathrm{SO}(3)$. Remarkably, essentially the same construction extends to the infinite-dimensional space of smooth curves. Millson and Zombro show in \cite{millson} that the space $\mathrm{IsoImm}(S^1,\R^3)$ of smooth arclength-parameterized loops, identified up to rigid motions, is realized as the symplectic reduction of the loop space of the 2-sphere by the rotation action of $\mathrm{SO}(3)$. Illustrating a symplectic Gelfand-Macpherson correspondence, Hausmann and Knutson show in \cite{hausmann} that $\mathrm{Pol}(n,\vec{r})$ can alternatively be realized as the symplectic reduction of the Grassmannian of 2-planes $\mathrm{Gr}_2(\C^n)$ by the natural action of $\mathrm{U}(1)^n/\mathrm{U}(1)$. This idea was taken further by Howard, Manon and Millson in \cite{howard} to identify $\mathrm{Gr}_2(\C^n)$ with a moduli space of \emph{framed} polygons called the \emph{space of spin-framed $n$-gons}. The aim of this paper is to give smooth versions of the constructions of \cite{hausmann,howard} and to show a relationship between these ideas and recent work in the field of computer vision. 

We begin by giving a construction of a K\"{a}hler structure on the space of smooth, parameterized, framed paths in $\R^3$---a \emph{framing} of a smooth curve is a choice of smooth normal unit vector field along the curve. The basic idea of the construction is to represent a framed path in $\R^3$ as a path in $\C^2$ (or the quaternions) via the Hopf map. This trick is well-known to the computer graphics community \cite{hanson} and has applications to contact geometry \cite{arnold}, but we take the novel viewpoint that this representation is a local diffeomorphism of infinite-dimensional manifolds. This locally embeds the space of framed paths into the path space of $\C^2$---a complex Fr\'{e}chet vector space denoted $\mathcal{P}\C^2$. We endow $\mathcal{P}\C^2$ with a Hermitian $L^2$ metric, and the space of framed paths is thereby shown to have a rather transparent K\"{a}hler structure. Moreover, various natural moduli spaces of framed curves are realized as symplectic reductions of $\mathcal{P}\C^2$. 

We are particularly interested in the \emph{moduli space of framed loops},
$$
\mathcal{M}:=\{\mbox{relatively framed loops in $\R^3$}\}/\mathrm{Sim},
$$
where $\mathrm{Sim}$ denotes the group of Euclidean similarities. A \emph{relative framing} of a loop is an equivalence class of framings which is determined up to a choice of initial conditions. Our first main result says that each connected component of $\mathcal{M}$ is identified in our coordinate system with an infinite-dimensional complex Grassmannian (Theorem \ref{thm:grassmannian}). This follows from the fact that the closure condition for a framed path in the complex coordinate system is simply $L^2$-orthonormality---this is in stark contrast to the traditional curvature and torsion functional coordinates on the space of Frenet-framed paths, in which the known closure characterizations are impractical to check \cite{grinevich, hwang}.

The based loop group of the circle $C^\infty(S^1,S^1)/S^1$ acts on $\M$ by \emph{frame twisting}; that is, it acts transitively on the set of relative framings of a fixed base curve. We show in Theorem \ref{thm:millson_zombro_space} that  symplectic reduction by this group action produces the Millson-Zombro space $\mathrm{IsoImm}(S^1,\R^3)$. The proof involves showing that $\mathcal{M}$ has the stucture of a principal bundle over the space of unframed loops (Theorem \ref{thm:principal_bundle}). An immediate corollary formalizes a phenomenon observed in computer graphics literature \cite{carroll, hanson2, wang}: any attempt to continuously assign a framing which works for all immersed loops will necessarily fail. 

Since $\M$ consists of \emph{parameterized} framed curves, we can also consider the action of the reparameterization group $\mathrm{Diff}^+(S^1)$. On the symplectic side, we show that the action is Hamiltonian with a momentum map that records the twisting of the vector field. This is used to characterize the critical points of a natural generalization of the classical total twist functional (Theorem \ref{thm:twist_critical_points}). The interplay of the Riemannian part of the K\"{a}hler structure of $\M$ and the action of $\diff$ shows connections with recent work on shape recognition applications. The Riemannian metric of $\M$ is $\diff$-invariant, meaning it induces a well-defined metric on the quotient space of \emph{unparameterized framed loops} $\M/\diff$. The methods of the recently developed field of \emph{elastic shape analysis} \cite{bauer, michor, srivastava, younes} can be employed to approximate geodesic distance in $\M/\diff$ (see Section \ref{subsubsec:elastic_shape_analysis}), thereby giving a shape recognition algorithm for framed loops. 

The shape recognition algorithm requires efficient computation of geodesics in $\M$. Fortunately, the fact that $\M$ is identified with a Grassmannian allows us to describe its geodesics explictly. Using this description, we show that the planar loop space studied by Younes, Michor, Shah and Mumford in \cite{younes} in the context of object recognition embeds as a totally geodesic Lagrangian submanifold of $\mathcal{M}$ (Proposition \ref{prop:tot_geod_lagrangian}). The spaces $\mathcal{M}$ and $\mathcal{M}/\mathrm{Diff}^+(S^1)$ are then shown to be nonnegatively curved (Theorem \ref{thm:sectional_curvature}), echoing prior results in the shape recognition literature for spaces of planar curves \cite{bauer,younes}.

The paper is organized as follows. Section \ref{sec:framed_curve_spaces} introduces the basic spaces of framed curves of interest and Section \ref{sec:complex_coordinates} describes the complex coordinate system for framed curve space. Sections \ref{sec:frame_twisting_action} and \ref{sec:reparameterization_action} treat the actions of $C^\infty(S^1,S^1)/S^1$ and $\mathrm{Diff}^+(S^1)$, respectively. Section \ref{sec:riemannian_geometry} is devoted to the Riemannian geometry of $\mathcal{M}$.

%%%%%%%%%%%%%%%%%%%%%%%%%%%%%%%%%%%%%%%%%%%%%%%%%%%%%%%%%%%%%%%%%%%
\subsection{Notation}\label{subsec:notation}
%%%%%%%%%%%%%%%%%%%%%%%%%%%%%%%%%%%%%%%%%%%%%%%%%%%%%%%%%%%%%%%%%%%

For a finite-dimensional manifold $M$, we will use the notation $\mathcal{P}M:=C^\infty([0,2],M)$ for the \emph{path space of $M$}---the interval $[0,2]$ is chosen as the domain of any path as a convenient normalization. Let  $\mathcal{L}M:=C^\infty(S^1,M)$ denote the \emph{loop space of $M$}. We identify $S^1$ with $[0,2]/(0\sim 2)$ so that $\mathcal{L}M$ includes naturally into $\mathcal{P}M$. The spaces $\mathcal{P}M$ and $\mathcal{L}M$ are tame Fr\'{e}chet manifolds (see \cite{hamilton} for a general reference on Fr\'{e}chet spaces) and the inclusion $\mathcal{L}M \hookrightarrow \mathcal{P}M$ embeds $\mathcal{L}M$ into $\mathcal{P}M$ as a smooth submanifold of infinite codimension. A smooth map $f:M\rightarrow N$ induces a smooth map $\mathcal{P}f:\mathcal{P}M \rightarrow \mathcal{P}N$ by the formula $(\mathcal{P}f(\gamma))(t)=f(\gamma(t))$. A similar statement holds for the loop spaces.

To distinguish from $S^1=[0,2]/(0\sim 2)$, we use $\bbS^n$ to denote the standard radius-1 $n$-sphere embedded in $\R^{n+1}$ with its induced Riemannian metric. We reserve $\left<\cdot,\cdot\right>$ and $\|\cdot\|$ for the Euclidean inner product and norm in $\R^3$, respectively. Other inner products will be given specialized notation. We will use $\R^+$ to denote the positive real numbers, considered as a Lie group under multiplication.

%%%%%%%%%%%%%%%%%%%%%%%%%%%%%%%%%%%%%%%%%%%%%%%%%%%%%%%%%%%%%%%%%%%
\section{Framed Curve Spaces}\label{sec:framed_curve_spaces}
%%%%%%%%%%%%%%%%%%%%%%%%%%%%%%%%%%%%%%%%%%%%%%%%%%%%%%%%%%%%%%%%%%%

%%%%%%%%%%%%%%%%%%%%%%%%%%%%%%%%%%%%%%%%%%%%%%%%%%%%%%%%%%%%%%%%%%%
\subsection{Framed Path Space}\label{subsec:framed_path_space}
%%%%%%%%%%%%%%%%%%%%%%%%%%%%%%%%%%%%%%%%%%%%%%%%%%%%%%%%%%%%%%%%%%%

We begin with a formal definition of framed path space.

\begin{definition}
A \emph{framed path} is a pair $(\gamma,V)$ of smooth maps $[0,2] \rightarrow \R^3$ such that the \emph{base curve} $\gamma$ is an immersion and the \emph{framing} $V$ is a unit normal vector field along $\gamma$. The \emph{moduli space of framed paths} is the quotient space
$$
\mathcal{O}:=\left\{\mbox{framed paths}\right\}/\R^3,
$$
where $\R^3$ acts on a framed path by translation.
\end{definition}

It will frequently be convenient to represent elements of $\op$ as framed paths $(\gamma,V)$ with $\gamma(0)=\vec{0}$. This representation is equivalent to choosing a global section of the $\R^3$-bundle $\{\mbox{framed paths}\} \rightarrow \op$. This convention will be assumed unless otherwise noted.

A simple but useful observation is that it is possible to represent a framed path $(\gamma,V)$ as an element of $\mathcal{P}(\mathrm{SO}(3) \times \R^+)$ via the \emph{frame map} $\mathrm{F}:\op \rightarrow \mathcal{P}(\mathrm{SO}(3) \times \R^+)$ defined by 
\begin{equation} \label{eqn:map_to_SO3}
\mathrm{F}:(\gamma,V) \mapsto \left((T,V,T\times V),\|\gamma'\|\right),  \;\; T=\frac{\gamma'}{\|\gamma'\|}.
\end{equation}
Since $\op$ contains framed paths defined up to translation, $\mathrm{F}$ is invertible. If we take elements of $\op$ to be based at the origin, then the inverse is given explicitly by
\begin{equation}\label{eqn:map_from_SO3}
\mathrm{F}^{-1}:((U,V,W),r) \mapsto \left(t\mapsto \int_0^t r\left(\tilde{t}\right)U\left(\tilde{t}\right) \; \mathrm{d}\tilde{t}, V \right).
\end{equation}
Here we are denoting an element of $\mathcal{P}\mathrm{SO}(3)$ as a triple $(U(t),V(t),W(t))$ of paths in $\R^3$ which are pairwise orthonormal for all $t$. The frame map gives an identification of $\op$  with $\mathcal{P}(\mathrm{SO}(3) \times \R^+)$ and we conclude that $\op$ naturally has the structure of a tame Fr\'{e}chet manifold. 

To simplify notation, we denote the group of Euclidean similarities by $\mathrm{Sim}:=\R^3 \times \mathrm{SO}(3) \times \R^+$ and the subgroup of similarities that fix the origin by $\mathrm{Sim}_0:=\mathrm{SO}(3) \times \R^+$.

%%%%%%%%%%%%%%%%%%%%%%%%%%%%%%%%%%%%%%%%%%%%%%%%%%%%%%%%%%%%%%%%%%%
\subsection{Framed Loop Space}\label{subsec:framed_loop_space}
%%%%%%%%%%%%%%%%%%%%%%%%%%%%%%%%%%%%%%%%%%%%%%%%%%%%%%%%%%%%%%%%%%%

The manifold $\op$ of open framed paths contains the more interesting submanifold of \emph{closed} framed loops. It will be convenient to introduce an intermediate space.

\begin{definition}
A \emph{frame-periodic framed path} is a framed path $(\gamma,V)$ such that $\gamma'$ and $V$ are closed smooth curves. If a frame-periodic framed path also satisfies $\gamma(0)=\gamma(2)$, then it is called a \emph{framed loop}. The collections of frame-periodic framed paths and framed loops, considered up to translation, will respectively be denoted $\op_{fp}$ and $\cl$.
\end{definition}

As in the case of $\op$, we will typically represent elements of $\op_{fp}$ and $\cl$ as framed paths/loops  which are based at the origin.

If $(\gamma,V)$ is a frame-periodic framed path, then its image under the frame map (\ref{eqn:map_to_SO3}) is a loop in $\mathrm{Sim}_0$. The identification $\op \approx \mathcal{P}\mathrm{Sim}_0$ restricts to an identification $\op_{fp} \approx \mathcal{L}\mathrm{Sim}_0$ and it follows that $\op_{fp} \subset \op$ is a submanifold of infinite codimension. It is apparent that the frame map restricts to give an embedding of $\mathcal{C}$ into $\LSO$ which is not surjective. In fact, the inverse frame map takes an element $((U,V,W),r)$ of $\mathcal{L}\mathrm{Sim}_0$ into $\cl$ if and only if $\int_{S^1} rU \, \mathrm{d}t = \vec{0}$---this is simply the closure condition for $\gamma$. A straightforward application of Hamilton's Implicit Function Theorem \cite[Section III, Theorem 2.3.1]{hamilton} proves the following proposition. We omit the proof, but its structure is similar to the proof of Proposition \ref{prop:arclength_parameterized_submanifolds}, which is given below.

\begin{prop}\label{prop:tilde_C_manifold}
Framed loop space $\cl$ is a codimension-3 submanifold of frame-periodic framed path space $\op_{fp}$.
\end{prop}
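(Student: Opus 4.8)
The plan is to exhibit $\cl$ as the zero set of a tame map and apply Hamilton's Implicit Function Theorem. Under the frame-map identification $\op_{fp}\approx\LSO$, define the \emph{closure map}
\[
\Phi:\LSO\longrightarrow\R^3,\qquad \Phi\big((U,V,W),r\big)=\int_{S^1} r\,U\,\mathrm{d}t,
\]
so that, by the closure characterization recorded above, $\cl=\Phi^{-1}(\vec 0)$. Since $\Phi$ is built from pointwise multiplication followed by integration over $S^1$, it is a smooth tame map. To conclude that $\Phi^{-1}(\vec 0)$ is a codimension-$3$ submanifold it is enough to check that $\vec 0$ is a regular value in the sense demanded by \cite[Section III, Theorem 2.3.1]{hamilton}: the linearization is surjective at every framed loop and admits a right inverse that depends tamely on the base point.

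First I would compute the linearization. A tangent vector to $\LSO$ at $((U,V,W),r)$ may be written as a pair $(\omega,\rho)$ with $\omega\in\mathcal{L}\R^3$ the infinitesimal frame rotation and $\rho\in\mathcal{L}\R$ the variation of speed; since $\Phi$ sees only $U$ and $r$, and the first column of the frame varies by $\delta U=\omega\times U$, one obtains
\[
d\Phi(\omega,\rho)=\int_{S^1}\big(\rho\,U+r\,\omega\times U\big)\,\mathrm{d}t.
\]
Surjectivity at every framed loop is then a pointwise observation: if $\vec c\in\R^3$ is orthogonal to the image of $d\Phi$, then setting $\omega=0$ forces $\langle \vec c,U\rangle\equiv 0$, while setting $\rho=0$ forces $r\,(U\times\vec c)\equiv\vec 0$ and hence $U\times\vec c\equiv\vec 0$ as $r>0$. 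A unit vector cannot be simultaneously orthogonal and parallel to a nonzero vector, so $\vec c=\vec 0$ and $d\Phi$ is onto.

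The step actually required by the theorem---and the technical heart of the argument---is promoting this pointwise surjectivity to a tame right inverse. For this I would use the formal $L^2$-adjoint $L^\ast$ of $L:=d\Phi$, namely $L^\ast\vec v=\big(\langle U,\vec v\rangle,\,r\,(U\times\vec v)\big)$. Using $\|U\|=1$ and the identity $(U\times\vec v)\times U=(I-UU^{\mathrm{T}})\vec v$, a direct computation gives the symmetric matrix
\[
LL^\ast=\int_{S^1}\Big(UU^{\mathrm{T}}+r^{2}\,(I-UU^{\mathrm{T}})\Big)\,\mathrm{d}t,
\]
whose quadratic form at $\vec c$ equals $\int_{S^1}\big[\langle U,\vec c\rangle^{2}+r^{2}(\|\vec c\|^{2}-\langle U,\vec c\rangle^{2})\big]\,\mathrm{d}t$ and is therefore strictly positive for $\vec c\neq\vec 0$. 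Thus $LL^\ast$ is invertible and $S:=L^\ast(LL^\ast)^{-1}$ is a genuine right inverse of $d\Phi$, assembled from pointwise algebraic operations in $(U,r)$ and the inversion of a $3\times 3$ matrix whose entries are integrals of the smooth frame data; this realizes $S$ as a tame family of inverses varying tamely over $\LSO$.

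With $\Phi$ tame, $d\Phi$ surjective, and the explicit tame right inverse $S$ in hand, Hamilton's Implicit Function Theorem identifies $\cl=\Phi^{-1}(\vec 0)$ as a tame Fr\'{e}chet submanifold of codimension $3$. I expect the genuine obstacle to be analytic rather than algebraic: one must verify that $\Phi$, its linearization, and the right inverse $S$ all obey the tame estimates of the Nash--Moser framework uniformly over the relevant neighborhoods. Once $S$ is produced explicitly as above, these estimates are routine and parallel the verification carried out for Proposition \ref{prop:arclength_parameterized_submanifolds}.
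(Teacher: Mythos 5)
Your proposal is correct and follows the same overall template the paper has in mind --- realize $\cl$ as the zero set of the closure map on $\LSO$ and invoke Hamilton's implicit function theorem --- but your verification of regularity is genuinely different from the paper's model argument. The paper omits the proof and points to Proposition \ref{prop:arclength_parameterized_submanifolds}, where the speed is frozen at $\textbf{1}$, so only frame rotations are available as variations; surjectivity there is obtained by first arguing that $U$ cannot be constant (using the closure condition), locating a $t_0$ with $\mathrm{span}\{V(0),W(0),V(t_0),W(t_0)\}=\R^3$, and integrating against bump functions supported near $0$ and $t_0$. You instead exploit the speed variation $\rho$, which contributes the $U$-direction directly, so that surjectivity becomes a pointwise linear-algebra statement --- a vector annihilating the image of $d\Phi$ would have to be simultaneously orthogonal and parallel to the unit vector $U(t)$ --- valid at every point of $\LSO$ with no appeal to the closure condition or to non-constancy of $U$. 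You also supply the explicit tame right inverse $L^\ast(LL^\ast)^{-1}$, which the paper leaves implicit since the target is finite-dimensional. Your route is arguably cleaner for this particular proposition precisely because $r$ is free here; the paper's bump-function argument has the advantage of adapting verbatim to the arclength-constrained case of Proposition \ref{prop:arclength_parameterized_submanifolds}, where your shortcut is unavailable. One cosmetic remark: your formula for $L^\ast$ lists the components in the order $(\rho,\omega)$ while $d\Phi$ takes $(\omega,\rho)$; the adjoint of $(\omega,\rho)\mapsto\int_{S^1}\rho\,U+r\,\omega\times U\,\mathrm{d}t$ sends $\vec v$ to $\left(r\,(U\times\vec v),\langle U,\vec v\rangle\right)$ in the latter ordering, which is clearly what you intend.
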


The identification $\op_{fp} \approx \LSO$ shows that $\op_{fp}$ has two path components. The submanifold $\cl$ therefore has at least two path components and an argument similar to the classical proof of the Whitney-Graustein theorem \cite{whitney} can be used to show that it has exactly two. The path component of a framed loop $(\gamma,V)$ with embedded base curve $\gamma$ is determined by the linking number of $\gamma$ with a small pushoff $\gamma + \epsilon V$, modulo 2. Accordingly, the path components of $\cl$ are denoted $\cl_{od}$ and $\cl_{ev}$ for odd and even self-linking number, respectively.

%%%%%%%%%%%%%%%%%%%%%%%%%%%%%%%%%%%%%%%%%%%%%%%%%%%%%%%%%%%%%%%%%%%
\section{Complex Coordinates for Framed Curve Spaces}\label{sec:complex_coordinates}
%%%%%%%%%%%%%%%%%%%%%%%%%%%%%%%%%%%%%%%%%%%%%%%%%%%%%%%%%%%%%%%%%%%

%%%%%%%%%%%%%%%%%%%%%%%%%%%%%%%%%%%%%%%%%%%%%%%%%%%%%%%%%%%%%%%%%%%
\subsection{Complex Coordinates for Framed Paths}
%%%%%%%%%%%%%%%%%%%%%%%%%%%%%%%%%%%%%%%%%%%%%%%%%%%%%%%%%%%%%%%%%%%

The goal of this section is to provide a dictionary between the framed curve spaces $\op$ and $\cl$ and various submanifolds of the path space $\mathcal{P}\C^2$. We begin by introducing some notation. Elements of the complex vector space $\mathcal{P}\C^2$ will be denoted $\Phi=(\phi,\psi)$, where $\phi,\psi \in \mathcal{P}\C$. We endow $\C^2$ with its standard Hermitian inner product $\left<\cdot,\cdot\right>_{\C^2}$ and $\mathcal{P}\C^2$ with the $L^2$ Hermitian (weak) inner product
$$
\left<\Phi_1,\Phi_2\right>_{L^2}:= \int_0^2 \left<\Phi_1(t),\Phi_2(t)\right>_{\C^2} \; \mathrm{d}t.
$$
The associated norm will be denoted $\|\cdot\|_{L^2}$. This Hermitian structure trivially gives a K\"{a}hler structure on $\mathcal{P}\C^2$ with Riemannian metric $g^{L^2}:=\mathrm{Re}\left<\cdot,\cdot\right>_{L^2}$, symplectic form $\omega^{L^2}:=-\mathrm{Im}\left<\cdot,\cdot\right>_{L^2}$ and complex structure given by pointwise multiplication by the imaginary unit $i$. Since $\omega^{L^2}$ doesn't depend on its basepoint, it is obviously closed in the sense of \cite[Section 1.4]{brylinski}. This K\"{a}hler structure restricts to the linear subspace $\mathcal{L}\C^2$ and to the open submanifolds 
$$
\mathcal{P}^\circ \C^2:=\mathcal{P}(\C^2 \setminus \{\vec{0}\}) \;\; \mbox{ and } \;\; \mathcal{L}^\circ \C^2:=\mathcal{L}(\C^2 \setminus \{\vec{0}\}).
$$
We will abuse notation and continue to use $\left<\cdot,\cdot\right>_{L^2}$, $g^{L^2}$ and $\omega^{L^2}$ to denote the restrictions of these objects to the subspaces. Abusing notation even further, we will denote the $L^2$ Hermitian inner product on $\mathcal{P}\C$ (and its restriction to any subspaces) by
$$
\left<\phi,\psi\right>_{L^2} := \int_0^2 \phi \overline{\psi} \, \mathrm{d}t,
$$
and the induced norm by $\|\cdot\|_{L^2}$.

We require the definition of a new subspace of $\mathcal{P}\C^2$. A path $\Phi \in \mathcal{P}\C^2$ is called \emph{smoothly antiperiodic} if 
$$
\left.\frac{\mathrm{d}^k}{\mathrm{d}t^k}\right|_{t=2} \Phi = -\left.\frac{\mathrm{d}^k}{\mathrm{d}t^k}\right|_{t=0} \Phi \;\;\; \mbox{for all $k=0,1,2,\ldots$}.
$$
The space of antiperiodic paths in $\C^2$, or the \emph{antiloop space of $\C^2$}, is denoted $\mathcal{A}\C^2$. The  antiloop space is a complex tame Fr\'{e}chet vector space. We define $\mathcal{A}\C$ and $\mathcal{A}(\C^2 \setminus \{\vec{0}\})=:\mathcal{A}^\circ \C^2$  similarly. There is a biholomorphism from $\mathcal{L}\C$ to $\mathcal{A}\C$ given by $\phi(t) \mapsto e^{it/2} \phi(t)$.

\begin{prop}\label{prop:kahler_structure_for_paths}
There is a smooth double covering $\mathcal{P}^\circ \C^2 \rightarrow \op$ which restricts to a smooth double covering $\mathcal{L}^\circ \C^2 \sqcup \mathcal{A}^\circ\C^2 \rightarrow \op_{fp}$. By transfer of structure, $\op$ and $\op_{fp}$ are K\"{a}hler manifolds.
\end{prop}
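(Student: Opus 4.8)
The plan is to build the double covering explicitly from the quaternionic Hopf construction and then descend the $L^2$ K\"{a}hler structure through it. Identify $\C^2$ with the quaternions $\mathh$ by sending $\Phi=(\phi,\psi)$ to $q=\phi+\psi\qj$, so that $\C^2\setminus\{\vec 0\}\cong\mathh\setminus\{0\}$. For $q\in\mathh\setminus\{0\}$ the conjugation $v\mapsto qv\bar q$ preserves the imaginary quaternions $\mathrm{Im}\,\mathh\cong\R^3$ and acts there as $|q|^2$ times a rotation $\rho(q)\in\mathrm{SO}(3)$; since $q\qi\bar q$ is purely imaginary of norm $|q|^2$, the assignment
$$
p\colon q\longmapsto\bigl(\rho(q),\,|q|^2\bigr)=\Bigl(\tfrac{1}{|q|^2}\bigl(q\qi\bar q,\,q\qj\bar q,\,q\qk\bar q\bigr),\,|q|^2\Bigr)\in\mathrm{Sim}_0
$$
is smooth. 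Because $\rho(q)=\rho(-q)$ and $p(q)=p(q')$ forces $q'=\pm q$ (the universal cover $\bbS^3\to\mathrm{SO}(3)$ has deck group $\{\pm1\}$), $p$ is a smooth double cover with deck transformation $q\mapsto -q$. Composing $\mathcal{P}p\colon\mathcal{P}^\circ\C^2\to\PSO$ with the frame-map identification $\mathrm F^{-1}\colon\PSO\xrightarrow{\ \sim\ }\op$ of \eqref{eqn:map_from_SO3} yields the desired map, which on base curves reads $\gamma(t)=\int_0^t q\qi\bar q\,\mathrm d\tilde t$ with framing $V=q\qj\bar q/|q|^2$.

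First I would verify that $\mathcal{P}p$ is again a double covering of tame Fr\'{e}chet manifolds. This follows from unique path lifting for the finite-dimensional cover $p$: a path in $\mathrm{Sim}_0$ has exactly two smooth lifts, interchanged by $q\mapsto -q$, and smoothly varying lifts over smoothly varying paths furnish the local sections that trivialize $\mathcal{P}p$. Pre-composing with the diffeomorphism $\mathrm F^{-1}$ preserves this structure. To identify the restriction over $\op_{fp}\approx\LSO$, note that a lift $q$ of a loop $\sigma\in\LSO$ satisfies $p(q(2))=p(q(0))$, hence $q(2)=\pm q(0)$; since $q\mapsto -q$ is linear it commutes with differentiation, so the matching of all endpoint derivatives of $\sigma$ forces either $q^{(k)}(2)=q^{(k)}(0)$ for all $k$ (a smooth loop, $q\in\mathcal L^\circ\C^2$) or $q^{(k)}(2)=-q^{(k)}(0)$ for all $k$ (smoothly antiperiodic, $q\in\mathcal A^\circ\C^2$). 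Conversely both conditions make $\gamma'=q\qi\bar q$ and $V$ frame-periodic, since these are homogeneous quadratic in $q$ so the sign ambiguity cancels. As loop versus antiloop is locally constant along paths in the cover, $\mathcal L^\circ\C^2$ and $\mathcal A^\circ\C^2$ are precisely the two sheets over $\op_{fp}$.

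It then remains to transfer the K\"{a}hler structure, which is essentially formal. The key observation is that the deck transformation $q\mapsto -q$ is a holomorphic isometry of $(\mathcal P^\circ\C^2,g^{L^2},\omega^{L^2},i)$: its differential is $-\mathrm{id}$ on each linear tangent space, so it preserves both $\mathrm{Re}\left<\cdot,\cdot\right>_{L^2}$ and $\mathrm{Im}\left<\cdot,\cdot\right>_{L^2}$ and commutes with multiplication by $i$. Hence $g^{L^2}$, $\omega^{L^2}$ and $i$ are deck-invariant and descend to well-defined tensors on $\op$, with $\mathrm F^{-1}\circ\mathcal{P}p$ a local K\"{a}hler isomorphism. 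Every defining K\"{a}hler identity ($J^2=-\mathrm{id}$, the compatibility $g=\omega(\cdot,J\cdot)$, integrability of $J$, and $\mathrm d\omega=0$) is local, so each holds on $\op$ because it holds upstairs. Restricting the same argument to the complex linear subspaces $\mathcal L\C^2$ and $\mathcal A\C^2$ (antiperiodicity being a linear condition, cf.\ the biholomorphism $\mathcal L\C\cong\mathcal A\C$) equips $\op_{fp}$ with its K\"{a}hler structure.

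I expect the genuinely delicate point to be the first step: promoting the slogan ``$\mathcal{P}$ of a finite-dimensional covering is a covering'' to a rigorous statement in the tame Fr\'{e}chet category, that is, producing \emph{smooth} (not merely continuous) local sections of $\mathcal{P}p$ and checking they trivialize it. Everything else is bookkeeping once the loop/antiloop dichotomy is pinned down, and the only subtlety there is confirming that smooth frame-periodicity of $(\gamma,V)$ is equivalent to smooth (anti)periodicity of the lift $q$, which reduces to the fact that $\gamma'$ and $V$ are homogeneous quadratic in $q$ so that endpoint sign flips cancel under Leibniz's rule.
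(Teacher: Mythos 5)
Your argument is correct and follows essentially the same route as the paper: build the double cover $\C^2\setminus\{0\}\to\mathrm{Sim}_0$ from the quaternionic Hopf construction, apply the path functor, identify $\mathcal{P}\mathrm{Sim}_0$ with $\op$ via the inverse frame map, and sort lifts of loops into the periodic and antiperiodic sheets. You are in fact more explicit than the paper on the two points it glosses over (that $\mathcal{P}$ of a finite-dimensional covering is a covering in the Fr\'{e}chet category, and that the deck transformation $q\mapsto -q$ is a holomorphic isometry so the K\"{a}hler structure descends); the only discrepancy is the harmless convention flip $q v \overline{q}$ versus the paper's $\overline{q} v q$ in (\ref{eqn:quaternion_hopf}).
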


The proof of the proposition relies on the well-known trick of representing a framed path as a path in the quaternions via the frame-Hopf map (see, e.g., \cite{arnold, hanson}). Let $\mathbb{H}=\mathrm{span}_\R\{1,\qi,\qj,\qk\}$ denote the quaternions. The \emph{frame-Hopf map} is the map $\mathrm{Hopf}:\mathbb{H}\rightarrow \R^{3 \times 3}$ defined by
\begin{equation}\label{eqn:quaternion_hopf}
\mathrm{Hopf}(q)=(\overline{q}\qi q, \overline{q} \qj q, \overline{q} \qk q).
\end{equation}
In the above, $\overline{q}$ denotes the \emph{quaternionic conjugate} of $q$. Each entry on the right side of (\ref{eqn:quaternion_hopf}) lies in $\mathrm{span}_\R\{\qi,\qj,\qk\}$, which we identify with $\R^3$. We can identify $\C^2$ with $\mathbb{H}$ via $(z,w) \leftrightarrow q=z+w\textbf{j}$ and $i \leftrightarrow \qi$, and under this identification the frame-Hopf map is given  by the formula
\begin{equation}\label{eqn:hopf_map_complex_coords}
\mathrm{Hopf}(z,w):=\left(\begin{array}{ccc}
|z|^2-|w|^2 & 2\mathrm{Im}(zw) & -2\mathrm{Re}(zw) \\
2\mathrm{Im}(z\overline{w}) & \mathrm{Re}(z^2+w^2) & \mathrm{Im}(z^2+w^2) \\
2\mathrm{Re}(z\overline{w}) & \mathrm{Im}(-z^2+w^2) & \mathrm{Re}(z^2-w^2) \end{array}\right).
\end{equation}
It easy to check that each column of $\mathrm{Hopf}$ restricts to give a Hopf fibration $\bbS^1 \hookrightarrow \bbS^3 \rightarrow \bbS^2$.

The frame-Hopf map has several useful properties. We list a few of them in the following lemma. Each assertion follows by an elementary computation. In the lemma and throughout the rest of the paper we use $\mathrm{Hopf}_j$, $j\in\{1,2,3\}$ to denote the $j$-th column of $\mathrm{Hopf}$. 

\begin{lem}\label{lem:frame_hopf_map}
The frame-Hopf map has the following properties:
\begin{itemize}
\item[(i)] $\mathrm{Hopf}$ restricts to a map $\C^2 \setminus \{0\} \rightarrow \R^{3 \times 3}$ with $\mathrm{Hopf}(z,w)=\mathrm{Hopf}(z',w')$ if and only if $(z,w) = \pm (z',w')$. 
\item[(ii)] $\mathrm{Hopf}$ has the scaling property
$$
\mathrm{Hopf}(r\cdot(z,w))=r^2 \mathrm{Hopf}(z,w), \;\;\; r \in \R^+.
$$
In particular, each entry $\mathrm{Hopf}_j$, $j=1,2,3$, squares norms:
$$
\|\mathrm{Hopf}_j(z,w)\| = \|(z,w)\|_{\C^2}^2.
$$
\item[(iii)] The entries $\mathrm{Hopf}_j$ are mutually orthogonal and have the same norm.
\end{itemize}
\end{lem}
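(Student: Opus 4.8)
The plan is to work in the quaternionic model $q = z + w\qj \in \mathh$ and to read all three claims off the formula $\mathrm{Hopf}(q) = (\bar q\,\qi\,q,\ \bar q\,\qj\,q,\ \bar q\,\qk\,q)$ from (\ref{eqn:quaternion_hopf}), using only the multiplicativity of the quaternion modulus and the cyclic invariance $\mathrm{Re}(ab) = \mathrm{Re}(ba)$ of the real part; this is cleaner than manipulating the explicit matrix (\ref{eqn:hopf_map_complex_coords}). Throughout, $e$ denotes an arbitrary imaginary unit $\qi,\qj$ or $\qk$, and I identify $\mathrm{span}_\R\{\qi,\qj,\qk\}$ with $\R^3$ equipped with the restriction of the real inner product $\langle a,b\rangle = \mathrm{Re}(\bar a b)$ on $\mathh$.

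For (ii), the scaling $\overline{rq}\,e\,(rq) = r^2\,\bar q\,e\,q$ for real $r$ is immediate, giving $\mathrm{Hopf}(r\cdot(z,w)) = r^2\,\mathrm{Hopf}(z,w)$. For the norm statement, I first note $\overline{\bar q\,e\,q} = -\,\bar q\,e\,q$, so each column $\mathrm{Hopf}_j(q)$ is purely imaginary and its Euclidean $\R^3$-norm equals its quaternion modulus; multiplicativity then yields $\|\bar q\,e\,q\| = |\bar q|\,|e|\,|q| = |q|^2 = \|(z,w)\|_{\C^2}^2$.

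For (iii), equal norms are immediate from (ii). Orthogonality follows from the computation, for distinct imaginary units $e,f$,
$$
\langle \bar q\,e\,q,\ \bar q\,f\,q\rangle = \mathrm{Re}\big(\overline{\bar q\,e\,q}\cdot\bar q\,f\,q\big) = -\,\mathrm{Re}\big(\bar q\,e\,q\,\bar q\,f\,q\big) = -\,|q|^2\,\mathrm{Re}\big(\bar q\,e\,f\,q\big) = -\,|q|^4\,\mathrm{Re}(ef),
$$
where the third equality uses $q\bar q = |q|^2$ and the fourth uses cyclicity; since $ef$ is again an imaginary unit (up to sign) when $e\neq f$, we get $\mathrm{Re}(ef) = 0$ and the columns are orthogonal.

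Property (i) is the only place a non-computational input enters. The direction $\Leftarrow$ is the identity $\mathrm{Hopf}(-q) = \mathrm{Hopf}(q)$, the two sign changes in $\overline{(-q)}\,e\,(-q)$ cancelling. For $\Rightarrow$, suppose $\mathrm{Hopf}(q) = \mathrm{Hopf}(q')$ with $(z,w),(z',w')\neq \vec 0$; since by (ii) the norm of each column equals $|q|^2$ (resp.\ $|q'|^2$), the equality forces $|q| = |q'| > 0$, so after dividing by the common modulus I may assume $q,q'$ are unit quaternions. Then $\mathrm{Hopf}(q)$ is precisely the matrix of the rotation $v\mapsto \bar q\,v\,q$ of $\R^3$, and the hypothesis says $\bar q\,v\,q = \bar{q'}\,v\,q'$ for all imaginary $v$. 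I expect this final step to be the main (if standard) obstacle: it amounts to identifying the kernel of the spin double cover $\bbS^3 \to \mathrm{SO}(3)$. It can be dispatched either by citing that cover, or self-containedly by setting $p = q'\bar q$ (a unit quaternion), rewriting the identity as $p\,v\,\bar p = v$ for all imaginary $v$, concluding that $p$ commutes with every imaginary quaternion and is therefore real, hence $p = \pm 1$ and $q' = \pm q$, i.e.\ $(z',w') = \pm(z,w)$.
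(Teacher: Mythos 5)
Your proof is correct. Note that the paper itself offers no proof of this lemma: it simply remarks that ``each assertion follows by an elementary computation,'' presumably meaning direct verification from the explicit matrix formula (\ref{eqn:hopf_map_complex_coords}). Your route instead works entirely in the quaternionic model (\ref{eqn:quaternion_hopf}), and it is arguably the cleaner way to carry out those computations: multiplicativity of the modulus gives (ii) in one line, the identity $\overline{\bar q e q}=-\bar q e q$ plus cyclicity of $\mathrm{Re}$ gives (iii) without touching the nine matrix entries, and the only genuinely non-computational point --- the ``only if'' direction of (i) --- is correctly reduced to the statement that a unit quaternion commuting with all imaginary quaternions is $\pm 1$, i.e.\ to the kernel of the spin cover $\bbS^3\to\mathrm{SO}(3)$ (a fact the paper itself invokes later in Lemma \ref{lem:su2_action}). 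Two small presentational points: the hypothesis in (i) gives $\bar q v q=\bar{q'}vq'$ only for $v\in\{\qi,\qj,\qk\}$, and you should say explicitly that it extends to all imaginary $v$ by $\R$-linearity; and it is worth recording the identity $\mathrm{Re}(\bar a b)=\langle a,b\rangle$ for purely imaginary $a,b$, which is what licenses reading Euclidean inner products off real parts of quaternion products. Neither affects the validity of the argument.
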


\begin{proof}[Proof of Proposition \ref{prop:kahler_structure_for_paths}]
It follows from Lemma \ref{lem:frame_hopf_map} that $\mathrm{Hopf}$ induces a smooth double-cover $\widehat{\mathrm{Hopf}}:\C^2 \setminus \{0\} \rightarrow \mathrm{Sim}_0$ defined by
$$
\widehat{\mathrm{Hopf}}(z,w):=\left(\frac{1}{\|(z,w)\|_{\C^2}^2}\mathrm{Hopf}(z,w),\|(z,w)\|_{\C^2}^2 \right),
$$
where $\widehat{\mathrm{Hopf}}(z,w)=\widehat{\mathrm{Hopf}}(z',w')$ if and only if $(z,w)=\pm (z',w')$.  Applying the path functor produces a smooth map $\mathcal{P}\widehat{\mathrm{Hopf}}: \mathcal{P}^\circ\C^2 \rightarrow \mathcal{P}\mathrm{Sim}_0$ satisfying $\mathcal{P}\widehat{\mathrm{Hopf}}(\Phi_1)=\mathcal{P}\widehat{\mathrm{Hopf}}(\Phi_2)$ if and only if $\Phi_1 = \pm \Phi_2$ for $\Phi_j \in \mathcal{P}^\circ\C^2$. In light of the identification $\mathcal{P}\mathrm{Sim}_0 \approx \op$ via the inverse frame map $\mathrm{F}^{-1}$, this completes the proof of the first claim. The map $\mathcal{P}\widehat{\mathrm{Hopf}}$ restricts to a double covering $\mathcal{L}^\circ \C^2 \sqcup \mathcal{A}^\circ \C^2 \rightarrow \mathcal{L}\mathrm{Sim}_0 \approx \op_{fp}$, where each factor in the disjoint union covers one of the two path components of $\mathcal{L}\mathrm{Sim}_0$.
\end{proof}

Since the maps in the proof will be used frequently throughout the rest of the paper, we will use the simplified notation
$$
\mathrm{H}:=\mathcal{P}\widehat{\mathrm{Hopf}} \;\; \mbox{ and } \;\; \widehat{\mathrm{H}}:=\mathrm{F}^{-1} \circ \mathrm{H}.
$$

%%%%%%%%%%%%%%%%%%%%%%%%%%%%%%%%%%%%%%%%%%%%%%%%%%%%%%%%%%%%%%%%%%%
\subsection{Projective Spaces}\label{sec:projective_spaces}
%%%%%%%%%%%%%%%%%%%%%%%%%%%%%%%%%%%%%%%%%%%%%%%%%%%%%%%%%%%%%%%%%%%

The fact that $\widehat{\mathrm{H}}$ is a double cover suggests that it would be useful to projectivize.

\begin{definition}
For $\mathcal{V}=\mathcal{P}\C$, $\mathcal{L}\C$ or $\mathcal{A}\C$, let $\mathrm{S}(\mathcal{V}^2)$ denote the radius-$\sqrt{2}$ $L^2$-sphere in $\mathcal{V}^2$. Let $\mathrm{Proj}_\R(\mathcal{V}^2)$ denote the \emph{projective space of real lines in $\mathcal{V}^2$}, obtained from $\mathrm{S}(\mathcal{V}^2)$ by identifying antipodal points. Let $\mathrm{Proj}^\circ_\R(\mathcal{V}^2)$ denote the open submanifold obtained as the corresponding quotient of the open submanifold $\mathrm{S}^\circ(\mathcal{V}^2):=\{\Phi \in \mathrm{S}(\mathcal{V}^2) \mid \Phi(t) \neq \vec{0} \, \forall \, t\}$.
\end{definition}

\begin{remark}\label{rmk:manifold_structures}
Adapting the usual finite-dimensional charts, one is able to show that the spheres and projective spaces defined above are Fr\'{e}chet manifolds. We will define infinite-dimensional complex projective spaces, Stiefel manifolds and Grassmannians below. These spaces are also Fr\'{e}chet manifolds, and we will refer to them as such without further comment. Moreover, one can show that the complex projective spaces and complex Grassmannians are complex Fr\'{e}chet manifolds by adapting the classical holomorphic charts.
\end{remark}

Phrasing the definition differently, $\mathrm{Proj}_\R(\mathcal{V}^2)$ is obtained as the quotient of $\mathcal{V}^2\setminus\{\vec{0}\}$ by the action of $\R \setminus \{0\}$ by pointwise multiplication. The path component $\R^+ \subset \R \setminus \{0\}$ acts on framed path space by scaling base curves: that is, $r \in \R^+$ acts on $(\gamma, V) \in \op$ according to the formula $(r \cdot (\gamma, V))(t) = (r \gamma(t), V(t))$. Part (b) of Lemma \ref{lem:frame_hopf_map} immediately implies the following.

\begin{lem}
The map $\widehat{\mathrm{H}}$ is equivariant with respect to the pointwise multiplication action of $\R \setminus \{0\}$ on $\mathcal{P}^\circ \C^2$ and the scaling action of $\R^+$ on $\op$ in the sense that $\widehat{\mathrm{H}}(r \cdot \Phi) = r^2 \cdot \widehat{\mathrm{H}}(\Phi)$.
\end{lem}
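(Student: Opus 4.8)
The plan is to reduce the claimed identity to a pointwise statement about $\widehat{\mathrm{Hopf}}$ and then transport it through the inverse frame map. Since $\mathrm{H} = \mathcal{P}\widehat{\mathrm{Hopf}}$ is defined by postcomposition, we have $\mathrm{H}(r \cdot \Phi)(t) = \widehat{\mathrm{Hopf}}(r\,\Phi(t))$ for every $t$, so it suffices to understand how $\widehat{\mathrm{Hopf}}$ interacts with scalar multiplication by $r \in \R \setminus \{0\}$ on $\C^2 \setminus \{0\}$.

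First I would record the pointwise scaling behavior. For $r \in \R^+$, part (ii) of Lemma \ref{lem:frame_hopf_map} gives $\mathrm{Hopf}(r(z,w)) = r^2 \mathrm{Hopf}(z,w)$, and of course $\|r(z,w)\|_{\C^2}^2 = r^2 \|(z,w)\|_{\C^2}^2$. Here the one point that needs care---and the only mild subtlety in the argument---is that the scaling action is by all of $\R \setminus \{0\}$, whereas (ii) is stated only for positive $r$. This gap is closed by part (i): since $\mathrm{Hopf}(-(z,w)) = \mathrm{Hopf}(z,w)$, for negative $r$ one writes $r = -|r|$ and concludes $\mathrm{Hopf}(r(z,w)) = \mathrm{Hopf}(|r|(z,w)) = |r|^2\mathrm{Hopf}(z,w) = r^2 \mathrm{Hopf}(z,w)$. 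Thus $\mathrm{Hopf}(r(z,w)) = r^2 \mathrm{Hopf}(z,w)$ holds for all $r \in \R \setminus \{0\}$. Feeding this into the definition of $\widehat{\mathrm{Hopf}}$, the factors of $r^2$ in the numerator and in $\|r(z,w)\|_{\C^2}^2$ cancel in the $\mathrm{SO}(3)$-component, while the $\R^+$-component picks up exactly one factor of $r^2$; that is, if $\widehat{\mathrm{Hopf}}(z,w) = (U,\rho)$ then $\widehat{\mathrm{Hopf}}(r(z,w)) = (U, r^2\rho)$.

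Applying this pointwise, $\mathrm{H}(r\cdot\Phi)$ and $\mathrm{H}(\Phi)$ have the same $\mathrm{SO}(3)$-valued path, and the $\R^+$-valued speed path of the former is $r^2$ times that of the latter. It then remains to check, via $\mathrm{F}^{-1}$, that multiplying the speed path by the constant $r^2$ corresponds precisely to the scaling action of $r^2 \in \R^+$ on $\op$. This is where I would be careful to match conventions: under the frame map (\ref{eqn:map_to_SO3}), scaling a base curve $\gamma \mapsto s\gamma$ by $s \in \R^+$ fixes the unit tangent $T = \gamma'/\|\gamma'\|$ (hence the whole $\mathrm{SO}(3)$ frame and the framing $V$) while multiplying the speed $\|\gamma'\|$ by $s$, so the $\R^+$-scaling on $\op$ is exactly scaling of the speed component on $\mathcal{P}\mathrm{Sim}_0$. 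Pulling back through the explicit integral formula (\ref{eqn:map_from_SO3}) confirms this, since constant-scaling the speed by $r^2$ pulls the constant out of the integral, sending $\gamma$ to $r^2\gamma$ and leaving $V$ unchanged. Chaining the two identifications yields $\widehat{\mathrm{H}}(r\cdot\Phi) = \mathrm{F}^{-1}(\mathrm{H}(r\cdot\Phi)) = r^2 \cdot \widehat{\mathrm{H}}(\Phi)$, as desired. I expect no genuine obstacle here; the only things worth stating explicitly are the extension from $\R^+$ to $\R \setminus \{0\}$ using the antipodal invariance (i) and the bookkeeping identifying speed-scaling with base-curve scaling under the frame map.
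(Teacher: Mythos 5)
Your argument is correct and is exactly the route the paper intends: the paper offers no written proof, stating only that the lemma follows immediately from the scaling property in Lemma \ref{lem:frame_hopf_map}, and your writeup simply fills in the routine bookkeeping (cancellation of $r^2$ in the $\mathrm{SO}(3)$-component, the $r^2$ factor passing through the integral in $\mathrm{F}^{-1}$). Your extra step handling negative $r$ via the antipodal invariance from part (i) is a legitimate and worthwhile detail that the paper glosses over.
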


For the sake of concreteness, we will identify the quotient $\op/\R^+$ with the global cross-section consisting of framed curves whose base curve has fixed length $2$ (this normalization will be convenient later on). Then $\widehat{\mathrm{H}}$ restricts to give a double covering of $\op/\R^+$ by $\mathrm{S}^\circ(\mathcal{P}\C^2)$ with fibers of the form $\{\pm \Phi\}$. Indeed, let $\Phi \in \mathrm{S}^\circ(\mathcal{P}\C^2)$ and let $(\gamma,V)=\widehat{\mathrm{H}}(\Phi)$. Then $\|\Phi\|_{L^2}=\sqrt{2}$ implies
\begin{equation}\label{eqn:integral_length_calculation}
\mathrm{length}(\gamma)= \int_0^2 \|\gamma'(t)\| \; \mathrm{d}t = \int_0^2 \|\mathrm{Hopf}_1(\Phi(t))\| \; \mathrm{d}t = \int_0^2 \|\Phi(t)\|^2_{\C^2} \; \mathrm{d}t =2.
\end{equation}
The same argument holds for the loop space and antiloop spaces, and we conclude:

\begin{cor}\label{cor:real_projective_spaces}
The map $\widehat{\mathrm{H}}$ induces diffeomorphisms 
$$
\mathrm{Proj}_\R^\circ(\mathcal{P}\C^2) \approx \op/\R^+ \;\; \mbox{ and } \;\; \mathrm{Proj}_\R^\circ(\mathcal{L}\C^2) \sqcup \mathrm{Proj}_\R^\circ(\mathcal{A}\C^2) \approx \op_{fp}/\R^+.
$$
\end{cor}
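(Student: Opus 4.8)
The plan is to exploit the groundwork laid immediately before the statement. Applying the preceding equivariance lemma with $r=-1$ gives $\widehat{\mathrm{H}}(-\Phi)=\widehat{\mathrm{H}}(\Phi)$, and the length computation (\ref{eqn:integral_length_calculation}) shows that $\widehat{\mathrm{H}}$ restricts to a smooth double covering $\mathrm{S}^\circ(\mathcal{P}\C^2)\to\op/\R^+$ whose fibers are precisely the antipodal pairs $\{\pm\Phi\}$. Since $\mathrm{Proj}_\R^\circ(\mathcal{P}\C^2)$ is by definition the quotient of $\mathrm{S}^\circ(\mathcal{P}\C^2)$ by the antipodal involution $\Phi\mapsto-\Phi$, the first diffeomorphism is an instance of the general fact that the base of a double cover is recovered as the quotient of the total space by its deck group.

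First I would verify that the antipodal involution is exactly the nontrivial deck transformation of $\widehat{\mathrm{H}}$: it is a smooth involution of $\mathrm{S}^\circ(\mathcal{P}\C^2)$, it is fixed-point free (each $\Phi$ has $L^2$-norm $\sqrt{2}$, hence $\Phi\neq-\Phi$), and the identity $\widehat{\mathrm{H}}(-\Phi)=\widehat{\mathrm{H}}(\Phi)$ together with the description of the fibers shows that its orbits are exactly the covering fibers. Consequently $\widehat{\mathrm{H}}$ is constant on the fibers of the quotient projection $\pi:\mathrm{S}^\circ(\mathcal{P}\C^2)\to\mathrm{Proj}_\R^\circ(\mathcal{P}\C^2)$ and descends to a set map $\overline{\widehat{\mathrm{H}}}$ with $\overline{\widehat{\mathrm{H}}}\circ\pi=\widehat{\mathrm{H}}$. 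This map is a bijection: surjectivity is inherited from the covering, while injectivity holds because two paths have equal image under $\widehat{\mathrm{H}}$ iff they are antipodal, i.e. iff they coincide in $\mathrm{Proj}_\R^\circ(\mathcal{P}\C^2)$.

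To upgrade $\overline{\widehat{\mathrm{H}}}$ to a diffeomorphism I would show it is a local diffeomorphism. Both $\pi$ and $\widehat{\mathrm{H}}$ are covering maps of Fr\'{e}chet manifolds, hence local diffeomorphisms; locally $\pi$ admits a smooth section $s$, so $\overline{\widehat{\mathrm{H}}}=\widehat{\mathrm{H}}\circ s$ is smooth on a neighborhood, and symmetrically $\overline{\widehat{\mathrm{H}}}^{-1}=\pi\circ(\widehat{\mathrm{H}}|_U)^{-1}$ is smooth for a suitable sheet $U$ of the covering. A bijective local diffeomorphism is a diffeomorphism, giving the first claim.

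The second claim follows by repeating the argument on each summand. By Proposition \ref{prop:kahler_structure_for_paths} the restricted covering splits as $\mathrm{S}^\circ(\mathcal{L}\C^2)\sqcup\mathrm{S}^\circ(\mathcal{A}\C^2)\to\op_{fp}/\R^+$, with each summand covering one path component; the antipodal involution preserves each summand (as $\mathcal{L}\C^2$ and $\mathcal{A}\C^2$ are linear, hence closed under negation), so passing to quotients yields diffeomorphisms from $\mathrm{Proj}_\R^\circ(\mathcal{L}\C^2)$ and $\mathrm{Proj}_\R^\circ(\mathcal{A}\C^2)$ onto the two components. I do not anticipate a genuine analytic obstacle, since the substantive content---the covering property and the length normalization---is already in hand; the only point demanding care is the Fr\'{e}chet bookkeeping, namely that covering maps and free discrete quotients remain local diffeomorphisms in infinite dimensions, which is precisely what the charts of Remark \ref{rmk:manifold_structures} supply.
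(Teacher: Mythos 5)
Your proposal is correct and follows essentially the same route as the paper: identify $\op/\R^+$ with the length-$2$ cross-section via the computation (\ref{eqn:integral_length_calculation}), observe that $\widehat{\mathrm{H}}$ restricts to a double covering of this cross-section by $\mathrm{S}^\circ(\mathcal{P}\C^2)$ with antipodal fibers, and pass to the quotient by the deck group. The paper leaves the descent-to-quotient step implicit, whereas you spell out the standard verification; the mathematical content is the same.
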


Taking the real projectivization ignores the complex structure, and this suggests that we should further quotient by the $\bbS^1$ factor of $\C\setminus \{0\} \approx \R^+ \times \bbS^1$. More precisely, $\bbS^1 \subset \C$ acts on $\mathcal{P}\C^2$ by pointwise multiplication in each factor:
$$
(e^{i\theta} \cdot (\phi,\psi))(t):=(e^{i\theta}\phi(t),e^{i\theta}\psi(t)), \;\;\;\; e^{i\theta} \in \bbS^1 \subset \C.
$$
This action restricts to the various spheres $\mathrm{S}(\mathcal{V}^2)$.

\begin{definition}
For $\mathcal{V}=\mathcal{L}\C$ or $\mathcal{P}\C$, let $\mathrm{Proj}_\C(\mathcal{V}^2)$ denote the \emph{projective space of complex lines in $\mathcal{V}$}. The projective space is given by the quotient of $\mathrm{S}(\mathcal{V}^2)$ by the pointwise $\bbS^1$-action. Let $\mathrm{Proj}_\C^\circ(\mathcal{V}^2):=\mathrm{S}^\circ(\mathcal{V}^2)/\bbS^1$.
\end{definition}

Once again, this group action has a natural interpretation for framed curves. The action is by \emph{constant frame twisting}: $e^{i \theta} \in \bbS^1$ acts on $(\gamma,V)$ according to the formula
$$
e^{i\theta}\cdot (\gamma,V) = (\gamma, \cos(2 \theta) V + \sin(2 \theta) W), \;\;\;\; W=\frac{\gamma'}{\|\gamma'\|} \times V.
$$
The following lemma can be verified by a simple calculation.

\begin{lem}\label{lem:frame_hopf_equivariance}
The first coordinate of the frame-Hopf map $\mathrm{Hopf}_1$ is invariant under the diagonal $\bbS^1$-action on $\C^2$ by multiplication; that is, for all $e^{i\theta} \in \bbS^1$, 
$$
\mathrm{Hopf}_1(e^{i\theta}z,e^{i \theta}w) = \mathrm{Hopf}_1(z,w).
$$
The other coordinates of $\mathrm{Hopf}$ satisfy
\begin{align*}
\mathrm{Hopf}_2(e^{i\theta} (z,w)) &= \cos(2\theta) \mathrm{Hopf}_2(z,w) + \sin(2\theta) \mathrm{Hopf}_3(z,w),\\
\mathrm{Hopf}_3(e^{i\theta} (z,w)) &= -\sin(2\theta) \mathrm{Hopf}_2(z,w) + \cos(2\theta) \mathrm{Hopf}_3(z,w).
\end{align*}
It follows that the map $\widehat{\mathrm{H}}$ is equivariant with respect to the pointwise $\bbS^1$-action on $\mathcal{P}\C^2$ and the constant frame-twisting action of $\bbS^1$ on $\op$; that is, $\widehat{\mathrm{H}}(e^{i \theta}\Phi)=e^{i\theta} \cdot \widehat{\mathrm{H}}(\Phi)$
\end{lem}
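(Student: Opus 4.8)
The plan is to verify the three displayed transformation laws for the columns $\mathrm{Hopf}_1, \mathrm{Hopf}_2, \mathrm{Hopf}_3$ directly from the explicit formula \eqref{eqn:hopf_map_complex_coords}, and then to promote these pointwise identities to the equivariance statement for $\widehat{\mathrm{H}}$ by tracing through the definitions of $\widehat{\mathrm{H}}$ and the frame-twisting action. Since $\widehat{\mathrm{H}} = \mathrm{F}^{-1} \circ \mathcal{P}\widehat{\mathrm{Hopf}}$ is built pointwise from $\widehat{\mathrm{Hopf}}$, and the $\bbS^1$-action on $\mathcal{P}\C^2$ is itself pointwise, the entire statement reduces to the corresponding fact about $\widehat{\mathrm{Hopf}}:\C^2\setminus\{0\}\to\mathrm{Sim}_0$, so there is no loss in working at a single value of $t$.

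First I would substitute $(z,w)\mapsto(e^{i\theta}z, e^{i\theta}w)$ into \eqref{eqn:hopf_map_complex_coords} and simplify each matrix entry. The key algebraic observations are that the products appearing in the first column, $|z|^2$ and $|w|^2$, are manifestly invariant under multiplication by a unit modulus scalar, as is $z\overline{w}$ (since $e^{i\theta}z \cdot \overline{e^{i\theta}w} = z\overline{w}$); this immediately yields $\mathrm{Hopf}_1(e^{i\theta}z, e^{i\theta}w) = \mathrm{Hopf}_1(z,w)$. For the remaining two columns, the relevant quantities are $z^2$ and $w^2$, which transform as $z^2 \mapsto e^{2i\theta}z^2$ and $w^2 \mapsto e^{2i\theta}w^2$; writing out the real and imaginary parts of $e^{2i\theta}(z^2+w^2)$ and $e^{2i\theta}(-z^2+w^2)$ and collecting terms produces exactly the stated rotation by angle $2\theta$ mixing $\mathrm{Hopf}_2$ and $\mathrm{Hopf}_3$. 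This is a routine but slightly fiddly computation; the only care needed is to keep the two distinct combinations $z^2+w^2$ and $-z^2+w^2$ straight and to match the sign conventions in the rotation matrix.

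Having established the column identities, I would deduce the $\widehat{\mathrm{Hopf}}$-equivariance. Because part (ii) of Lemma \ref{lem:frame_hopf_map} gives $\|(e^{i\theta}z, e^{i\theta}w)\|_{\C^2} = \|(z,w)\|_{\C^2}$, the normalizing factor $\|(z,w)\|_{\C^2}^{2}$ and the $\R^+$-component of $\widehat{\mathrm{Hopf}}$ are both unaffected by the $\bbS^1$-action, so the content lives entirely in the $\mathrm{SO}(3)$-component. There the first column (the unit tangent direction $T$) is fixed, the framing vector $V$ is sent to $\cos(2\theta)V + \sin(2\theta)W$, and $W = T\times V$ is sent to $-\sin(2\theta)V + \cos(2\theta)W$, which is precisely the definition of constant frame twisting applied to $\widehat{\mathrm{Hopf}}(z,w)$. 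Applying the path functor and composing with $\mathrm{F}^{-1}$ (noting that $\mathrm{F}^{-1}$ reads off $V$ as the framing and integrates $rT$ to recover $\gamma$, and that the integrand $r\cdot T$ is unchanged since both $r$ and $T$ are fixed) then gives $\widehat{\mathrm{H}}(e^{i\theta}\Phi) = e^{i\theta}\cdot\widehat{\mathrm{H}}(\Phi)$, which is the assertion.

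The main obstacle, such as it is, is purely bookkeeping: ensuring that the factor of $2$ appearing in the rotation angle $2\theta$ is handled consistently between the Hopf-column transformation law and the definition of the frame-twisting action, and checking that the sign in the $\mathrm{Hopf}_3$ transformation matches the orientation convention for $W = T\times V$. There is no conceptual difficulty and no analytic subtlety, since passage from the finite-dimensional pointwise identity to the path-space statement is immediate from the fact that all maps and actions in sight are applied pointwise in $t$.
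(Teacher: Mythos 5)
Your proposal is correct and follows exactly the route the paper intends: the paper offers no written proof beyond the remark that the lemma ``can be verified by a simple calculation,'' and your direct substitution into the explicit matrix formula \eqref{eqn:hopf_map_complex_coords} --- observing that $|z|^2$, $|w|^2$ and $z\overline{w}$ are invariant while $zw$, $z^2$, $w^2$ pick up a factor of $e^{2i\theta}$ --- followed by the pointwise promotion through $\mathrm{F}^{-1}\circ\mathcal{P}\widehat{\mathrm{Hopf}}$ is precisely that calculation. You also correctly isolate the only genuine pitfall, namely the sign and factor-of-$2$ bookkeeping between the rotation of the $\mathrm{Hopf}_2$, $\mathrm{Hopf}_3$ columns and the orientation convention $W = T\times V$ in the definition of constant frame twisting.
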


An equivalence class of this $\bbS^1$-action on $\op$ will be called a \emph{relatively framed path}. A relatively framed path can be viewed as a path endowed with a framing which is well-defined up to a choice of initial conditions. A well-known example of a relative framing is the \emph{Bishop framing} \cite{bishop}, obtained for a given base curve $\gamma$ by evolving an initial vector $V(0)$ along $\gamma$ with no intrinsic twisting. Other examples of relative framings include the \emph{writhe framing} of \cite{dennis} and the \emph{constant twist minimizing framing} introduced in Section \ref{sec:principal_bundle}.

The lemma implies that the complex projective spaces correspond to relatively framed path spaces. As in the finite-dimensional case, the complex projective spaces are K\"{a}hler manifolds with Fubini-Study metrics inherited from their respective affine spaces. Summarizing, we have shown:

\begin{cor}\label{cor:projective_spaces}
The map $\widehat{\mathrm{H}}$ induces diffeomorphisms 
$$
\mathrm{Proj}_\C^\circ(\mathcal{P}\C^2) \approx \op/(\R^+ \times \bbS^1) \;\; \mbox{ and } \;\; \mathrm{Proj}_\C^\circ(\mathcal{L}\C^2) \sqcup \mathrm{Proj}^\circ_\C(\mathcal{A}\C^2) \approx \op_{fp}/(\R^+ \times \bbS^1). 
$$
It follows that the relatively framed path spaces are K\"{a}hler manifolds.
\end{cor}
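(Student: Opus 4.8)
The plan is to obtain both diffeomorphisms by descending the real-projective identifications of Corollary \ref{cor:real_projective_spaces} along the residual circle action, and then to transport the Fubini--Study K\"{a}hler structure across the resulting diffeomorphism. Recall that $\mathrm{Proj}_\C^\circ(\mathcal{V}^2)$ is by definition the quotient of $\mathrm{S}^\circ(\mathcal{V}^2)$ by the pointwise $\bbS^1$-action, whereas $\mathrm{Proj}_\R^\circ(\mathcal{V}^2)$ is the quotient by the subgroup $\{\pm 1\} \subset \bbS^1$; hence $\mathrm{Proj}_\C^\circ(\mathcal{V}^2)$ is the further quotient of $\mathrm{Proj}_\R^\circ(\mathcal{V}^2)$ by the induced action of $\bbS^1/\{\pm 1\} \cong \bbS^1$. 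On the framed-curve side, $\op/(\R^+ \times \bbS^1)$ is likewise the quotient of $\op/\R^+$ by the constant frame-twisting $\bbS^1$-action. Lemma \ref{lem:frame_hopf_equivariance} states precisely that $\widehat{\mathrm{H}}$ intertwines these two circle actions, so the diffeomorphism $\mathrm{Proj}_\R^\circ(\mathcal{P}\C^2) \approx \op/\R^+$ is $\bbS^1$-equivariant and therefore descends to a bijection of the quotients; the loop/antiloop case is identical.

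To verify that the descended map is a diffeomorphism of Fr\'{e}chet manifolds, I would first check that the relevant circle actions are free on the open loci, so that the quotients are smooth in the sense of Remark \ref{rmk:manifold_structures}. Freeness is immediate: if $e^{i\theta}\Phi = \Phi$ with $\Phi(t) \neq \vec{0}$ for all $t$, then $e^{i\theta} = 1$, so $\bbS^1$ acts freely on $\mathrm{S}^\circ(\mathcal{V}^2)$ and the residual $\bbS^1/\{\pm 1\}$ acts freely on $\mathrm{Proj}_\R^\circ(\mathcal{V}^2)$. Granting the manifold structures on the complex projective spaces, the descended map is smooth with smooth inverse because it is covered by the smooth $\bbS^1$-equivariant diffeomorphism upstairs; the classical holomorphic charts, adapted as in Remark \ref{rmk:manifold_structures}, make this transparent.

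For the K\"{a}hler assertion, I would equip $\mathrm{Proj}_\C^\circ(\mathcal{V}^2)$ with the Fubini--Study structure obtained as a K\"{a}hler quotient of the flat Hermitian space $\mathcal{V}^2$, and then pull it back through the diffeomorphisms just constructed. Concretely, at a point $\Phi \in \mathrm{S}^\circ(\mathcal{V}^2)$ the vertical direction of the $\bbS^1$-action is $i\Phi$, and the horizontal subspace is the $L^2$-Hermitian-orthogonal complement $\{X \in \mathcal{V}^2 : \langle X, \Phi\rangle_{L^2} = 0\}$ (the two real conditions $\mathrm{Re}\langle X,\Phi\rangle_{L^2}=0$ and $\mathrm{Im}\langle X,\Phi\rangle_{L^2}=0$ cut out tangency to the sphere and orthogonality to $i\Phi$ simultaneously). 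This complement is a complex subspace on which the restricted Hermitian form is nondegenerate, and it serves as a model for the tangent space of the quotient; one checks that the induced metric, almost-complex structure, and two-form are well defined independently of the representative $\Phi$ in its orbit, are mutually compatible, and that the two-form is closed. These computations are pointwise and run exactly as in the classical derivation of the Fubini--Study form on $\C P^n$, which is why the corollary follows formally from the preceding results.

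The main obstacle is analytic rather than algebraic: the ambient $L^2$ metric is only a weak Riemannian metric, so the abstract infinite-dimensional symplectic-reduction machinery does not apply verbatim. I would circumvent this by never invoking an abstract reduction theorem and instead defining all reduced tensors explicitly via the horizontal complement above, where the weakness of the metric is harmless---nondegeneracy of the Hermitian form on $\{X : \langle X,\Phi\rangle_{L^2}=0\}$ and closedness of the constant-coefficient form $\omega^{L^2}$ both hold as stated. The one point genuinely requiring care is that the splitting $\mathcal{V}^2 = \mathrm{span}_\C\{\Phi\} \oplus \{X : \langle X,\Phi\rangle_{L^2}=0\}$ is a topological direct sum of tame Fr\'{e}chet spaces depending smoothly on $\Phi$; here the explicit projection $X \mapsto X - \tfrac{1}{2}\langle X,\Phi\rangle_{L^2}\,\Phi$ (using the normalization $\|\Phi\|_{L^2}^2 = 2$) provides the required smoothly varying, continuous complement and does the real work.
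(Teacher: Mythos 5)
Your proposal is correct and follows essentially the same route as the paper, which obtains the corollary by combining the real-projective identifications with the $\bbS^1$-equivariance of $\widehat{\mathrm{H}}$ from Lemma \ref{lem:frame_hopf_equivariance} and then endowing the complex projective spaces with their Fubini--Study structures inherited from the affine spaces. Your additional checks (freeness of the residual circle action, the explicit horizontal complement $X \mapsto X - \tfrac{1}{2}\langle X,\Phi\rangle_{L^2}\Phi$, and the remarks on the weak $L^2$ metric) are details the paper leaves implicit, not a different argument.
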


%%%%%%%%%%%%%%%%%%%%%%%%%%%%%%%%%%%%%%%%%%%%%%%%%%%%%%%%%%%%%%%%%%%
\subsection{Complex Coordinates for Framed Loops}
%%%%%%%%%%%%%%%%%%%%%%%%%%%%%%%%%%%%%%%%%%%%%%%%%%%%%%%%%%%%%%%%%%%

A major benefit of the complex coordinates that we have developed for framed paths is that the necessary and sufficient conditions for the periodicity of the resulting framed path are surprisingly nice. We have the following key lemma, which can be seen as an infinite-dimensional analogue of the discussion in \cite[Section 3.5]{hausmann} regarding finite-dimensional polygon spaces.

\begin{lem}\label{lem:orthogonal}
A path $\Phi=(\phi,\psi)\in \mathcal{P}^\circ \C^2$ corresponds to a framed loop under $\widehat{\mathrm{H}}$ if and only if
\begin{itemize}
\item[(i)] The path $\Phi$ is either smoothly closed or smoothly antiperiodic and
\item[(ii)] the maps $\phi$ and $\psi$ have the same $L^2$ norm and are $L^2$-orthogonal:
$$
\int_0^2 |\phi|^2 \; \mathrm{d}t = \int_0^2 |\psi|^2 \; \mathrm{d}t  \;\; \mbox{ and } \;\; \int_0^2 \phi\overline{\psi} \; \mathrm{d}t= 0.
$$
\end{itemize}
\end{lem}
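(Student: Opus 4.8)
The plan is to unwind the definition $\widehat{\mathrm{H}}=\mathrm{F}^{-1}\circ\mathrm{H}$ and read off the base curve's velocity explicitly, then split the defining conditions of a framed loop into the two separate requirements of frame-periodicity and closure. Writing $(\gamma,V)=\widehat{\mathrm{H}}(\Phi)$ for $\Phi=(\phi,\psi)$, the inverse frame map (\ref{eqn:map_from_SO3}) sets $\gamma(t)=\int_0^t r(\tilde t)U(\tilde t)\,\mathrm{d}\tilde t$, where $r=\|\Phi\|_{\C^2}^2$ is the $\R^+$-component of $\widehat{\mathrm{Hopf}}(\Phi)$ and $U=\|\Phi\|_{\C^2}^{-2}\mathrm{Hopf}_1(\Phi)$ is its first column. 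The normalizing factors cancel, so I would first observe the clean identity $\gamma'(t)=rU=\mathrm{Hopf}_1(\Phi(t))$. Recalling the definition, $(\gamma,V)$ is a framed loop precisely when it is frame-periodic and additionally satisfies $\gamma(0)=\gamma(2)$; I would verify that these two requirements correspond exactly to (i) and (ii).

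For condition (i), I would simply invoke Proposition \ref{prop:kahler_structure_for_paths}: the map $\mathrm{H}=\mathcal{P}\widehat{\mathrm{Hopf}}$ restricts to a double covering $\mathcal{L}^\circ\C^2\sqcup\mathcal{A}^\circ\C^2\to\mathcal{L}\mathrm{Sim}_0\approx\op_{fp}$. Hence $\widehat{\mathrm{H}}(\Phi)$ is frame-periodic if and only if $\Phi$ lies in $\mathcal{L}^\circ\C^2\cup\mathcal{A}^\circ\C^2$, that is, if and only if $\Phi$ is smoothly closed or smoothly antiperiodic, which is exactly (i). For condition (ii), since elements of $\op$ are based at the origin I would compute the closure defect directly:
$$
\gamma(2)-\gamma(0)=\int_0^2\gamma'(t)\,\mathrm{d}t=\int_0^2\mathrm{Hopf}_1(\Phi(t))\,\mathrm{d}t.
$$
Plugging in the explicit first column $\mathrm{Hopf}_1(\phi,\psi)=\left(|\phi|^2-|\psi|^2,\,2\mathrm{Im}(\phi\overline{\psi}),\,2\mathrm{Re}(\phi\overline{\psi})\right)$ from (\ref{eqn:hopf_map_complex_coords}), the three coordinates of this integral are $\int_0^2(|\phi|^2-|\psi|^2)\,\mathrm{d}t$, $2\int_0^2\mathrm{Im}(\phi\overline{\psi})\,\mathrm{d}t$ and $2\int_0^2\mathrm{Re}(\phi\overline{\psi})\,\mathrm{d}t$. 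The vector vanishes if and only if the first coordinate gives $\|\phi\|_{L^2}=\|\psi\|_{L^2}$ and the remaining two give $\mathrm{Re}\int_0^2\phi\overline{\psi}\,\mathrm{d}t=\mathrm{Im}\int_0^2\phi\overline{\psi}\,\mathrm{d}t=0$, i.e. $\int_0^2\phi\overline{\psi}\,\mathrm{d}t=0$. These are precisely the equal-norm and $L^2$-orthogonality conditions of (ii). Combining the two steps yields the biconditional, since the closure condition $\gamma(0)=\gamma(2)$ is imposed on top of frame-periodicity.

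I do not expect a serious obstacle here; the argument is essentially a bookkeeping exercise once the identity $\gamma'=\mathrm{Hopf}_1\circ\Phi$ is isolated. The only points demanding care are the cancellation of the normalizing factor $\|\Phi\|_{\C^2}^{-2}$ (so that $\gamma'$ is genuinely $\mathrm{Hopf}_1\circ\Phi$ and not a rescaled version), and the correct matching of the real and imaginary parts of $\int\phi\overline{\psi}$ with the two Euclidean coordinates coming from $\mathrm{Hopf}_1$. The consistency of the antiperiodic case — that an antiperiodic $\Phi$ still yields a periodic $\gamma'$ and framing $V$, because $\mathrm{Hopf}$ is invariant under $\Phi\mapsto-\Phi$ by Lemma \ref{lem:frame_hopf_map}(i) — is already absorbed into Proposition \ref{prop:kahler_structure_for_paths}, so it need not be re-derived.
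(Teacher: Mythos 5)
Your proposal is correct and follows essentially the same route as the paper: invoke Proposition \ref{prop:kahler_structure_for_paths} to match frame-periodicity with condition (i), then read the closure condition $\int_0^2 \gamma'\,\mathrm{d}t=\vec{0}$ off the first column of the frame-Hopf map to obtain condition (ii). The only difference is that you make explicit the cancellation $\gamma'=rU=\mathrm{Hopf}_1\circ\Phi$, which the paper leaves implicit.
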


\begin{proof}
It follows from Proposition \ref{prop:kahler_structure_for_paths} that a path $\Phi$ satisfies $\widehat{\mathrm{H}}(\Phi) \in \op_{fp}$ if and only if $\Phi \in \mathcal{L}^\circ\C^2 \sqcup \mathcal{A}^\circ \C^2$. In this case, let $((T,V,W),r):=\mathrm{H}(\Phi) \in \mathcal{L}\mathrm{Sim}_0$. Then the frame periodic path $\widehat{\mathrm{H}}(\Phi)=\mathrm{F}^{-1}((T,V,W),r) \in \op_{fp}$ lies in the submanifold $\cl \subset \op_{fp}$ of framed loops if and only if  the closure condition $\int_{S^1} rT \; \mathrm{d}t = \vec{0}$ holds. We see from the formula for the frame-Hopf map given in (\ref{eqn:hopf_map_complex_coords}) that the closure condition is written in terms of $\phi$ and $\psi$ as 
$$
\int_0^2 |\phi|^2-|\psi|^2 \; \mathrm{d}t = \int_0^2 2\mathrm{Im}(\phi\overline{\psi}) \; \mathrm{d}t = \int_0^2 2\mathrm{Re}(\phi\overline{\psi}) \; \mathrm{d}t = 0.
$$
This is equivalent to the conditions given in (ii).
\end{proof}

This lemma has a symplectic interpretation. In the following, let $\mathcal{V}=\mathcal{L}\C$ or $\mathcal{A}\C$. The group $\mathrm{U}(2)$ acts by isometries on the Hermitian vector space $\mathcal{V}^2$ by pointwise right multiplication. This action is also Hamiltonian with momentum map
\begin{align*}
\mu_{\mathrm{U}(2)}: \mathcal{V}^2 &\rightarrow \mathfrak{u}(2) \left(\approx \mathfrak{u}(2)^\ast \right) \\
(\phi,\psi) &\mapsto  i\left(\begin{array}{cc}
\left<\phi,\phi\right>_{L^2} & -\left<\phi,\psi\right>_{L^2} \\
-\left<\psi,\phi\right>_{L^2} & \left<\psi,\psi\right>_{L^2} \end{array}\right).
\end{align*}
The conditions in part (ii) of Lemma \ref{lem:orthogonal} can then be phrased in terms of the entries of the entries of $\mu_{\mathrm{U}(2)}(\Phi)$.

\begin{definition}
For $\mathcal{V}=\mathcal{L}\C$ or $\mathcal{A}\C$, the \emph{Stiefel manifold of $L^2$-orthonormal 2-frames in $\mathcal{V}$} is the Fr\'{e}chet manifold
$$
\mathrm{St}_2(\mathcal{V}):=\left\{(\phi,\psi) \in \mathcal{V}^2 \mid \left<\phi,\phi\right>_{L^2}=\left<\psi,\psi\right>_{L^2}=1  \mbox{ and } \left<\phi,\psi\right>_{L^2} = 0 \right\}=\mu_{\mathrm{U}(2)}^{-1}\left(\begin{array}{cc}
i & 0 \\
0 & i \end{array}\right).
$$ 
Let $\mathrm{St}_2^\circ(\mathcal{V})$ denote the open submanifold
$$
\mathrm{St}^{\circ}_2(\mathcal{V}):=\{(\phi,\psi) \in \mathrm{St}_2(\mathcal{V}) \mid (\phi(t),\psi(t)) \neq (0,0) \; \forall \, t\}.
$$
\end{definition}

The scaling action of $\R^+$ on $\op$ restricts to the space of closed framed loops $\cl$. We similarly identify the quotient $\cl/\R^+$ with the space of closed loops of fixed length $2$.

\begin{prop}\label{prop:stiefel_manifold}
The restriction of $\widehat{\mathrm{H}}$ gives double coverings
$$
\mathrm{St}_2^\circ(\mathcal{L}\C)  \xrightarrow{\times 2}  \cl_{od}/\R^+ \;\; \mbox{ and } \;\; \mathrm{St}_2^\circ(\mathcal{A}\C) \xrightarrow{\times 2} \cl_{ev}/\R^+
$$
\end{prop}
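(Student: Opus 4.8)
The plan is to identify each open Stiefel manifold with the length-$2$ cross-section of a single component of $\cl/\R^+$ via the $\pm1$-quotient realized by $\widehat{\mathrm{H}}$, and then to match the two components correctly. First I would note that the normalizations $\langle\phi,\phi\rangle_{L^2}=\langle\psi,\psi\rangle_{L^2}=1$ give $\|\Phi\|_{L^2}^2=2$, so $\mathrm{St}_2^\circ(\mathcal{V})\subset\mathrm{S}^\circ(\mathcal{V}^2)$ for $\mathcal{V}=\mathcal{L}\C$ or $\mathcal{A}\C$. For $\Phi=(\phi,\psi)\in\mathrm{St}_2^\circ(\mathcal{L}\C)$ (resp. $\mathrm{St}_2^\circ(\mathcal{A}\C)$), condition (i) of Lemma \ref{lem:orthogonal} holds because $\Phi$ is smoothly closed (resp. antiperiodic), and condition (ii) is exactly the Stiefel requirement of equal $L^2$-norms together with $L^2$-orthogonality. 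Hence $\widehat{\mathrm{H}}(\Phi)\in\cl$, and the computation (\ref{eqn:integral_length_calculation}) shows its base curve has length $2$, so $\widehat{\mathrm{H}}(\Phi)$ represents an element of $\cl/\R^+$ under the chosen length-$2$ cross-section.

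For the converse and the covering structure, I would begin from a length-$2$ framed loop $(\gamma,V)\in\cl\subset\op_{fp}$. Proposition \ref{prop:kahler_structure_for_paths} supplies a lift $\Phi\in\mathcal{L}^\circ\C^2\sqcup\mathcal{A}^\circ\C^2$ with $\widehat{\mathrm{H}}(\Phi)=(\gamma,V)$; closure forces condition (ii) of Lemma \ref{lem:orthogonal}, so $\phi$ and $\psi$ have a common $L^2$-norm and are orthogonal, and since $\|\Phi\|_{L^2}^2=\mathrm{length}(\gamma)=2$ that common norm is $1$, whence $\Phi\in\mathrm{St}_2^\circ(\mathcal{V})$. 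The defining conditions of $\mathrm{St}_2^\circ(\mathcal{V})$ are invariant under the deck transformation $\Phi\mapsto-\Phi$ of the double cover $\widehat{\mathrm{H}}\colon\mathcal{P}^\circ\C^2\to\op$, so $\mathrm{St}_2^\circ(\mathcal{V})$ is a saturated submanifold on which $\Phi\mapsto-\Phi$ acts freely (as $\Phi$ is nowhere zero). The restriction of a double cover to a saturated submanifold is again a double cover onto its image, which here is precisely the length-$2$ cross-section of $\cl/\R^+$; this yields the asserted $\times2$ coverings with fibers $\{\pm\Phi\}$.

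The substantive step is matching $\mathrm{St}_2^\circ(\mathcal{L}\C)$ with $\cl_{od}/\R^+$ and $\mathrm{St}_2^\circ(\mathcal{A}\C)$ with $\cl_{ev}/\R^+$. The proof of Proposition \ref{prop:kahler_structure_for_paths} already records that $\mathcal{L}^\circ\C^2$ and $\mathcal{A}^\circ\C^2$ cover the two distinct components of $\op_{fp}$; since $\widehat{\mathrm{Hopf}}$ restricts on the unit sphere to the spin double cover $\bbS^3\to\mathrm{SO}(3)$, a frame loop $t\mapsto(T,V,W)$ lifts to a smoothly closed path (landing in $\mathcal{L}^\circ\C^2$) exactly when it is null-homotopic in $\pi_1(\mathrm{SO}(3))\cong\Z/2$, and to an antiperiodic path (landing in $\mathcal{A}^\circ\C^2$) otherwise. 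As $\cl\subset\op_{fp}$ has exactly the two components $\cl_{od},\cl_{ev}$ distinguished by self-linking mod $2$, it remains only to match frame-homotopy class with parity, and for this I would test a single representative: the length-$2$ round planar circle with its constant out-of-plane Bishop framing has self-linking $0$ (its pushoff is a parallel translate), while its frame loop is a full $2\pi$ rotation about a fixed axis, hence the nontrivial class of $\pi_1(\mathrm{SO}(3))$. This forces even self-linking to correspond to $\mathcal{A}^\circ\C^2$ and odd self-linking to $\mathcal{L}^\circ\C^2$, as claimed. I expect this parity bookkeeping to be the main obstacle: the covering structure is formal once Proposition \ref{prop:kahler_structure_for_paths} and Lemma \ref{lem:orthogonal} are in hand, whereas pinning down the self-linking/frame-homotopy dictionary requires either the explicit computation above or a C\u{a}lug\u{a}reanu--White argument relating the $\pi_1$-class of the frame loop to self-linking modulo $2$.
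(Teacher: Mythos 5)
Your proposal is correct and follows essentially the same route as the paper: membership in the Stiefel manifolds is matched with the conditions of Lemma \ref{lem:orthogonal} together with the length-$2$ normalization via the calculation (\ref{eqn:integral_length_calculation}), and the identification of components is settled by checking a single representative. The only cosmetic difference is that the paper computes the image of an explicit element of $\mathrm{St}_2^\circ(\mathcal{L}\C)$ and observes it is a framed circle of linking number $1$, whereas you start from the planar circle with constant framing and read off the nontrivial class of its frame loop in $\pi_1(\mathrm{SO}(3))$; both single-example checks are valid and give the same matching.
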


\begin{proof}
A path $\Phi \in \mathcal{P}^\circ \C^2$ lies in one of the Stiefel manifolds if and only if it satisfies the conditions of Lemma \ref{lem:orthogonal} and $\|\Phi\|_{L^2}=\sqrt{2}$. The calculation (\ref{eqn:integral_length_calculation}) shows that this is equivalent to $\widehat{\mathrm{H}}(\Phi) \in \cl/\R^+$. To see that $\widehat{\mathrm{H}}$ maps the path components as claimed it suffices to check an example. For
$$
\Phi(t)=\frac{1}{\sqrt{2}}(\cos(\pi t)-i\sin (\pi t), 1) \in \mathrm{St}_2^\circ(\mathcal{L}\C)
$$
we have $\widehat{\mathrm{H}}(\Phi)=(\gamma,V)$, where
$$
\gamma(t)=\frac{1}{\pi}(0,\cos(\pi t)-1,\sin(\pi t)), \;\;\; V(t)=(-\sin(\pi t), \cos^2(\pi t), \cos(\pi t) \sin(\pi t)).
$$
Thus the image of $\Phi$ is a framed circle with linking number 1.
\end{proof}

\begin{definition}
For $\mathcal{V}=\mathcal{L}\C$ or $\mathcal{A}\C$, the \emph{Grassmannian} is the Fr\'{e}chet manifold $\mathrm{Gr}_2(\mathcal{V})$ of 2-dimensional complex subspaces of $\mathcal{V}$. It can be represented as the quotient space $\mathrm{Gr}_2(\mathcal{V}) := \mathrm{St}_2(\mathcal{V})/\mathrm{U}(2)$. Let $\mathrm{Gr}_2^\circ(\mathcal{V})$ denote the open submanifold $\mathrm{Gr}_2^\circ(\mathcal{V}):=\mathrm{St}_2^\circ(\mathcal{V})/\mathrm{U}(2)$. 
\end{definition}

The Grassmannian $\Gr$ is a complex manifold (see Remark \ref{rmk:manifold_structures}). Moreover, $\Gr$ is obtained as the symplectic reduction of $\mathcal{V}^2$ by the isometric action of $\mathrm{U}(2)$ and therefore inherits a natural K\"{a}hler structure. We will use double slash notation for symplectic reductions for the rest of the paper; e.g., $\Gr=\mathcal{V}^2 \sslash \mathrm{U}(2)$.

The next lemma shows that the action of the subgroup $\mathrm{SU}(2)\subset \mathrm{U}(2)$ on $\mathcal{P}\C^2$ corresponds to the rotation action of $\mathrm{SO}(3)$ on framed path space; that is, an element $A$ of the group $\mathrm{SO}(3)$ acts on a framed path or loop $(\gamma,V)$  pointwise by the formula $A \cdot (\gamma,V)(t):=(A \cdot \gamma(t),A \cdot V(t))$. For $U \in \mathrm{SU}(2)$, we define $\mathrm{Hopf}(U) \in \R^{3 \times 3}$ by identifying $\mathrm{SU}(2)$ with $\bbS^3 \subset \C^2$ via 
$$
\left(\begin{array}{cc}
z & w \\
-\overline{w} & \overline{z} \end{array} \right) \leftrightarrow (z,w).
$$ 
The first statement of the lemma is well-known (see, e.g.,  \cite[Section I.1.4]{gelfand}). 

\begin{lem}\label{lem:su2_action}
The restriction of $\mathrm{Hopf}$ to $\mathrm{SU}(2) \subset \C^2$ gives an anti-homomorphic double cover of $\mathrm{SO}(3)$. It follows that the map $\widehat{\mathrm{H}}$ has the property that for $\Phi \in \mathcal{P}^\circ\C^2$ and $U \in \mathrm{SU}(2)$, 
$$
\widehat{\mathrm{H}}(\Phi \cdot U) = \mathrm{Hopf}(U) \cdot \widehat{\mathrm{H}}(\Phi).
$$
\end{lem}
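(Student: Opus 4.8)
The plan is to reduce the whole statement to a single pointwise identity for the frame-Hopf map and then propagate it through the explicit formula for $\widehat{\mathrm{H}}$. I would treat the first assertion as essentially classical. Under the identification $\mathrm{SU}(2)\approx\bbS^3\subset\mathbb{H}$, a unit quaternion $q$ satisfies $\overline{q}=q^{-1}$, so $v\mapsto\overline{q}vq$ is a norm- and orientation-preserving endomorphism of $\mathrm{span}_\R\{\qi,\qj,\qk\}\approx\R^3$, i.e.\ an element of $\mathrm{SO}(3)$ whose matrix in the basis $\qi,\qj,\qk$ is precisely $\mathrm{Hopf}(q)$. Surjectivity onto $\mathrm{SO}(3)$ is standard \cite{gelfand}, and two-to-one-ness is Lemma \ref{lem:frame_hopf_map}(i) restricted to $\bbS^3$. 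The composition law comes from $\overline{q_1 q_2}=\overline{q_2}\,\overline{q_1}$: the $j$-th column of $\mathrm{Hopf}(q_1 q_2)$ is $\overline{q_2}\bigl(\overline{q_1}e_j q_1\bigr)q_2$, so as matrices $\mathrm{Hopf}(q_1 q_2)=\mathrm{Hopf}(q_2)\,\mathrm{Hopf}(q_1)$, the order reversal being exactly the source of the ``anti.''

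Next I would pin down how the $\mathrm{SU}(2)$-action looks in quaternionic coordinates. Writing $U\leftrightarrow u=a+b\qj$ (for $U$ the matrix with first row $(a,b)$) and $(z,w)\leftrightarrow q=z+w\qj$, a short computation using the relation $\qj z=\overline{z}\,\qj$ for $z\in\C$ shows that the right matrix product $(z,w)U$ corresponds to the quaternion product $qu$. Hence pointwise right multiplication of $\Phi$ by $U$ is the same as pointwise right quaternion multiplication by $u$.

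With these in hand, the central step is the pointwise identity $\mathrm{Hopf}(xU)=\mathrm{Hopf}(U)\,\mathrm{Hopf}(x)$ (matrix product) for every $x\in\C^2\setminus\{\vec{0}\}$. I would prove it by factoring $x=\|x\|_{\C^2}\,\hat{x}$ with $\hat{x}\in\bbS^3$: since $u$ is a unit quaternion, $\hat{x}u\in\bbS^3$, so the scaling property Lemma \ref{lem:frame_hopf_map}(ii) reduces the claim to the unit case, where it is exactly the composition law of the first paragraph. Because $U\in\mathrm{SU}(2)$ is unitary, $\|xU\|_{\C^2}=\|x\|_{\C^2}$, so the identity descends to the normalized map $\widehat{\mathrm{Hopf}}$: its $\mathrm{SO}(3)$-component at $xU$ is $\mathrm{Hopf}(U)$ times its $\mathrm{SO}(3)$-component at $x$, and its $\R^+$-component is unchanged.

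Finally I would feed this into $\widehat{\mathrm{H}}=\mathrm{F}^{-1}\circ\mathcal{P}\widehat{\mathrm{Hopf}}$. For $\Phi$, the base curve of $\widehat{\mathrm{H}}(\Phi)$ is $t\mapsto\int_0^t\mathrm{Hopf}_1(\Phi)\,\mathrm{d}\tilde{t}$ and the framing is $t\mapsto\|\Phi\|_{\C^2}^{-2}\,\mathrm{Hopf}_2(\Phi)$. Applying the pointwise identity columnwise and pulling the constant matrix $\mathrm{Hopf}(U)$ out of the integral (for the base curve) and out of the scalar normalization (for the framing) yields $\widehat{\mathrm{H}}(\Phi\cdot U)=(\mathrm{Hopf}(U)\gamma,\mathrm{Hopf}(U)V)=\mathrm{Hopf}(U)\cdot\widehat{\mathrm{H}}(\Phi)$, as claimed. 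I expect the only real friction to be the bookkeeping of conventions---left versus right multiplication, conjugation by $q$ versus $\overline{q}$, and matrix versus quaternion order---so that the order reversal in the anti-homomorphism correctly lands as \emph{left} multiplication by the single matrix $\mathrm{Hopf}(U)$ in the final formula; the analytic content (pulling a constant matrix through an integral) is trivial.
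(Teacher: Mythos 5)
Your proof is correct, and it supplies exactly the argument the paper leaves implicit: the paper states this lemma without proof, citing the first assertion as classical and asserting that the equivariance "follows." Your chain of reductions --- conjugation $v\mapsto\overline{q}vq$ realizes $\mathrm{Hopf}(q)$ for unit $q$, the anti-homomorphism from $\overline{q_1q_2}=\overline{q_2}\,\overline{q_1}$, the identification of right matrix multiplication by $U$ with right quaternion multiplication by $u=a+b\qj$, the scaling reduction to the unit sphere, and finally pulling the constant matrix $\mathrm{Hopf}(U)$ through $\mathrm{F}^{-1}$ --- is the intended route and all the individual computations check out.
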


This brings us to our first main result and to the space which the rest of the paper will be primarily concerned with, the \emph{moduli space of framed loops}
$$
\mathcal{M}:=\mathcal{C}/(\mathrm{Sim}_0 \times \bbS^1) = \{\mbox{relatively framed loops in $\R^3$}\}/\mathrm{Sim}.
$$
The moduli space $\mathcal{M}$ has two path components corresponding to the components of $\mathcal{C}$. We correspondingly denote these components $\mathcal{M}_{ev}$ and $\mathcal{M}_{od}$. For most of the paper we will denote elements of $\M$ by $[\gamma,V]$, where $(\gamma,V)$ is a framed loop with $\gamma(0)=\vec{0}$ and $\mathrm{length}(\gamma)=2$. The brackets denote the equivalence class of $(\gamma,V)$ under the action of $\mathrm{SO}(3) \times \bbS^1$ by rotations and constant frame twisting.

\begin{thm}\label{thm:grassmannian}
The map $\widehat{\mathrm{H}}$ induces diffeomorphisms
$$
\mathrm{Gr}_2^\circ(\mathcal{L}\C) \approx \mathcal{M}_{od} \;\; \mbox{ and } \;\; \mathrm{Gr}_2^\circ(\mathcal{A}\C) \approx \mathcal{M}_{ev}.
$$
By transfer of structure, $\mathcal{M}$ is a K\"{a}hler manifold.
\end{thm}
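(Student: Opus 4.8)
The plan is to obtain the stated diffeomorphisms by passing to the $\mathrm{U}(2)$-quotient in Proposition \ref{prop:stiefel_manifold}, and the crux is to verify that the right $\mathrm{U}(2)$-action on the Stiefel manifold descends, under the double cover $\widehat{\mathrm{H}}$, to precisely the combined action of $\mathrm{SO}(3)$ (rotation) and $\bbS^1$ (constant frame twisting) on $\cl/\R^+$, together with the deck transformation $\Phi \mapsto -\Phi$ of that double cover. Concretely, I would first record that $\mathrm{U}(2)$ is generated by its central circle $\{e^{i\theta}I\} \cong \bbS^1$ and by $\mathrm{SU}(2)$, so it suffices to understand how each of these factors acts on framed loops.

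Then I would assemble the equivariance dictionary already available: Lemma \ref{lem:su2_action} identifies the right $\mathrm{SU}(2)$-action with the $\mathrm{SO}(3)$-rotation action via the anti-homomorphic double cover $\mathrm{Hopf}|_{\mathrm{SU}(2)}$, and Lemma \ref{lem:frame_hopf_equivariance} identifies the central $\bbS^1$-action with constant frame twisting (scalar $e^{i\theta}$ twisting by angle $2\theta$). The key bookkeeping step is the kernel computation: the homomorphism $\rho : \mathrm{U}(2) \to \mathrm{SO}(3) \times \bbS^1$ recording (rotation, twist) is surjective, since its two factors hit $\mathrm{SO}(3)$ and $\bbS^1$ respectively, and $\rho(g)$ is trivial exactly when $\mathrm{Hopf}(U)=\mathrm{Id}$ and the twist angle is a multiple of $2\pi$, i.e.\ $g \in \{\pm I\}$. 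Since the deck transformation $\Phi \mapsto -\Phi = e^{i\pi}\Phi$ of the covering in Proposition \ref{prop:stiefel_manifold} is realized precisely by $-I \in \mathrm{U}(2)$, quotienting $\mathrm{St}_2^\circ(\mathcal{V})$ by $\mathrm{U}(2)$ simultaneously collapses the $\{\pm\Phi\}$ fibers and quotients by rotations and twisting; that is, $\rho$ descends to an isomorphism $\mathrm{U}(2)/\{\pm I\} \cong \mathrm{SO}(3)\times\bbS^1$ intertwining the two actions.

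Granting this, I would descend $\widehat{\mathrm{H}}$ to a map $\mathrm{Gr}_2^\circ(\mathcal{V}) = \mathrm{St}_2^\circ(\mathcal{V})/\mathrm{U}(2) \to \cl/(\mathrm{Sim}_0 \times \bbS^1) = \mathcal{M}$ and check it is a bijection. Surjectivity is inherited from Proposition \ref{prop:stiefel_manifold}; for injectivity, if $\widehat{\mathrm{H}}(\Phi_1)$ and $\widehat{\mathrm{H}}(\Phi_2)$ lie in a common $\mathrm{SO}(3)\times\bbS^1$-orbit, I would use the equivariance lemmas to write $\widehat{\mathrm{H}}(\Phi_1) = \widehat{\mathrm{H}}(\Phi_2 \cdot g)$ for some $g \in \mathrm{U}(2)$, whence $\Phi_1 = \pm\,\Phi_2\cdot g = \Phi_2 \cdot (\pm g)$ because the covering has fibers $\{\pm\}$, placing $\Phi_1,\Phi_2$ in the same $\mathrm{U}(2)$-orbit. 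Matching the two path components to $\mathcal{M}_{od}$ and $\mathcal{M}_{ev}$ is immediate from the component assignment in Proposition \ref{prop:stiefel_manifold}. Finally, since $\widehat{\mathrm{H}}$ is a local diffeomorphism, being a covering, and the quotient projections are submersions, the induced bijection is a local diffeomorphism and hence a diffeomorphism; transporting the K\"{a}hler structure that $\mathrm{Gr}_2^\circ(\mathcal{V})$ inherits as an open subset of the symplectic reduction $\mathrm{Gr}_2(\mathcal{V})$ then endows $\mathcal{M}$ with a K\"{a}hler structure.

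I expect the main obstacle to be the careful reconciliation of the single $\mathrm{U}(2)$-quotient with the three separate operations (rotation, twisting, and the sign ambiguity) on the framed-loop side---in particular keeping track of the factor of $2$ in the twist angle and the anti-homomorphism in Lemma \ref{lem:su2_action}, and confirming that $-I$ plays the dual role of the deck transformation and a $2\pi$-twist. A secondary technical point, which the paper handles at the level of adapted charts (Remark \ref{rmk:manifold_structures}), is justifying that the quotients are genuine Fr\'{e}chet manifolds and that the induced smooth bijection has smooth inverse in the tame Fr\'{e}chet category.
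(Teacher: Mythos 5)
Your proposal is correct and follows essentially the same route as the paper: both pass from Proposition \ref{prop:stiefel_manifold} to the $\mathrm{U}(2)$-quotient using Lemma \ref{lem:su2_action} for the $\mathrm{SU}(2)$/rotation correspondence and Lemma \ref{lem:frame_hopf_equivariance} for the central-circle/twisting correspondence, with the observation that $-I$ simultaneously realizes the deck transformation and acts trivially on framed loops. Your explicit kernel computation $\mathrm{U}(2)/\{\pm I\}\cong\mathrm{SO}(3)\times\bbS^1$ just makes precise what the paper compresses into the remark that ``modding out by $\bbS^1$ in particular identifies antipodal paths.''
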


\begin{proof}
Lemma \ref{lem:su2_action} implies that passing from $\St$ to the quotient $\St/\mathrm{SU}(2)$ has the effect of modding out by the rotation action of $\mathrm{SO}(3)$ on $\cl/\R^+$. As we have already seen, the $\mathrm{U}(1) \approx \bbS^1$ factor of $\mathrm{U}(2)\approx \mathrm{SU}(2) \times \mathrm{U}(1)$ corresponds to the constant frame twisting action on $\cl$. Further quotienting by $\bbS^1$, we obtain $\Gr = \St/\mathrm{U}(2)$. Modding out by $\bbS^1$ in particular identifies antipodal paths, thus the double cover $\widehat{\mathrm{H}}:\mathrm{St}^\circ_2(\mathcal{L}\C) \xrightarrow{\times 2} \cl_{od}$ induces a diffeomorphism $\mathrm{Gr}_2^\circ(\mathcal{L}\C) \approx \M_{od}$. Similarly, we have an induced diffeomorphism $\mathrm{Gr}_2^\circ(\mathcal{A}\C) \approx \mathcal{M}_{ev}$.
\end{proof}

We will denote elements of $\Gr$ by $[\Phi]$ for $\Phi=(\phi,\psi) \in \St$. The tangent spaces to $\St$ are
$$
T_\Phi \St = \left\{(\delta \phi, \delta \psi) \in \mathcal{V}^2 \mid g^{L^2}(\phi,\delta \phi) = g^{L^2}(\psi,\delta \psi) = \left<\phi, \delta \psi \right>_{L^2} - \left<\delta \phi, \psi\right>_{L^2} = 0\right\}.
$$
It will frequently be convenient to represent the tangent space to $[\Phi] \in \Gr$ as the \emph{horizontal tangent space} to $\Phi \in \St$---this is the subspace of tangent vectors which are $g^{L^2}$-orthogonal to the $\mathrm{U}(2)$-orbit of $\Phi$ given by
$$
T_\Phi^{hor} \St = \left\{(\delta \phi, \delta \psi) \in \mathcal{V}^2 \mid \left<\phi,\delta \phi\right>_{L^2} = \left<\psi, \delta \psi\right>_{L^2} = \left<\phi, \delta \psi \right>_{L^2} = \left<\psi, \delta \phi\right>_{L^2} = 0 \right\}.
$$

The identifications of this section are summarized below. The spaces in the first column have K\"{a}hler structures.

\begin{equation*}
\begin{aligned}[c]
\mathcal{P}^\circ \C^2 &\xrightarrow{\times 2} \op \\
\mathcal{L}^\circ \C^2 \sqcup \mathcal{A}^\circ \C^2 &\xrightarrow{\times 2} \op_{fp} \\
\mathrm{Proj}_\C^\circ(\mathcal{P}\C^2) &\approx \op/(\R^+ \times \bbS^1)\\
\mathrm{Proj}_\C^\circ(\mathcal{L}\C^2) \sqcup \mathrm{Proj}_\C^\circ (\mathcal{A}\C^2) &\approx \op_{fp}/(\R^+ \times \bbS^1)\\
\mathrm{Gr}_2^\circ(\mathcal{L}\C) \sqcup \mathrm{Gr}_2^\circ(\mathcal{A}\C) &\approx \mathcal{M}
\end{aligned}
\qquad \hspace{-0.3in} \qquad
\begin{aligned}[c]
\mathrm{Proj}_\R^\circ(\mathcal{P}\C^2)  &\approx \op/\R^+ \\
\mathrm{Proj}_\R^\circ(\mathcal{L}\C^2) \sqcup \mathrm{Proj}_\R^\circ (\mathcal{A}\C^2) &\approx \op_{fp}/\R^+\\
 \mathrm{St}_2^\circ(\mathcal{L}\C) \sqcup \mathrm{St}_2^\circ(\mathcal{A}\C) &\xrightarrow{\times 2} \cl/\R^+
\end{aligned}
\end{equation*}

%%%%%%%%%%%%%%%%%%%%%%%%%%%%%%%%%%%%%%%%%%%%%%%%%%%%%%%%%%%%%%%%%%%
\subsection{Elastic Shape Analysis}\label{subsubsec:elastic_shape_analysis}
%%%%%%%%%%%%%%%%%%%%%%%%%%%%%%%%%%%%%%%%%%%%%%%%%%%%%%%%%%%%%%%%%%%

We now take a brief detour to discuss the field of Elastic Shape Analysis, a novel approach to shape recognition that has been developed over the last decade \cite{bauer, michor, srivastava,  younes}. In this discussion we restrict our attention to shape recognition for loops in $\R^d$, although the methods of Elastic Shape Analysis have also been applied to curves in manifolds \cite{lebrigant}, surfaces in $\R^3$ \cite{kurtek}, et cetera. The main idea is to endow the space of immersions $\mathrm{Imm}(S^1,\R^d)$ with a Riemannian metric which is invariant under the reparameterization action of $\mathrm{Diff}^+(S^1)$. Then the metric descends to a well-defined metric on the quotient $\mathrm{Imm}(S^1,\R^d)/\mathrm{Diff}^+(S^1)$. Geodesic distance with respect to this Riemannian metric is then interpreted as a measure of dissimilarity between shapes. In practice, the geodesic distance between the shapes represented by parameterized loops $\gamma_1$ and $\gamma_2$  is typically computed by finding (or approximating) the infimum of geodesic distance between the $\mathrm{Diff}^+(S^1)$-orbits of $\gamma_1$ and $\gamma_2$ in the total space; that is, by computing
\begin{equation}\label{eqn:geod_dist_ESA}
\inf_{\rho_1,\rho_2 \in \mathrm{Diff}^+(S^1)} \mathrm{dist}(\gamma_1 \circ \rho_1,\gamma_2 \circ \rho_2) = \inf_{\rho \in \mathrm{\diff}} \mathrm{dist}(\gamma_1,\gamma_2 \circ \rho),
\end{equation}
where $\mathrm{dist}$ is geodesic distance in $\mathrm{Imm}(S^1,\R^d)$ and the equality follows from the assumption that $\diff$ acts by isometries. In this applied field, efficient computability of the distance (\ref{eqn:geod_dist_ESA}) is paramount and one therefore seeks a Riemannian metric on immersion space for which geodesics are easily computable. 

The \emph{elastic metrics} of Mio, Srivastava and Joshi \cite{mio} form a 2-parameter family of metrics $g^{a,b}$ on $\mathrm{Imm}(S^1,\R^2)$ with theoretically convenient properties. These are given explicitly for $\gamma \in \mathrm{Imm}(S^1,\R^2)$, $\delta \gamma_1, \delta \gamma_2 \in T_\gamma \mathrm{Imm}(S^1,\R^2) \approx C^\infty(S^1,\R^2)$ and $a,b > 0$ by the formula
\begin{equation}\label{eqn:elastic_metrics}
g^{a,b}_\gamma(\delta \gamma_1,\delta \gamma_2) = \int_{S^1} a \left<\frac{d}{ds} \delta \gamma_1,T \right>\left<\frac{d}{ds} \delta \gamma_2, T\right> + b \left<\frac{d}{ds} \delta \gamma_1,N \right>\left<\frac{d}{ds} \delta \gamma_2, N\right> \; \mathrm{d}s.
\end{equation}
In the above, $d/ds$ denotes derivative with respect to arclength, $\mathrm{d}s$ denotes arclength measure, $T=\gamma'/\|\gamma'\|$ is the unit tangent to $\gamma$ and $N$ is the oriented unit normal. In \cite{younes}, Younes, Michor, Shah and Mumford show that, with parameter choice $a=b$, the space $\mathrm{Imm}(S^1,\R^2)/\mathrm{Sim}$ of Euclidean similarity classes of immersions is locally isometric to an infinite-dimensional Grassmannian with a natural $L^2$ metric. This is remarkable because it allows for computation of explicit geodesics in $\mathrm{Imm}(S^1,\R^2)/\mathrm{Sim}$, whence the infimum procedure in (\ref{eqn:geod_dist_ESA}) can be approximated very efficiently via a dynamic programming algorithm. The relationship between our space $\M$ and the work of Younes et.\ al.\ will be expounded upon in Section \ref{sec:totally_geodesic_submanifolds}.

%%%%%%%%%%%%%%%%%%%%%%%%%%%%%%%%%%%%%%%%%%%%%%%%%%%%%%%%%%%%%%%%%%%
\subsection{Induced Geometric Structures}\label{sec:induced_geometric_structures}
%%%%%%%%%%%%%%%%%%%%%%%%%%%%%%%%%%%%%%%%%%%%%%%%%%%%%%%%%%%%%%%%%%%

Proposition \ref{prop:kahler_structure_for_paths} states that $\op$ inherits a K\"{a}hler structure from $\mathcal{P}^\circ \C^2$ by transfer of structure under the local diffeomorphism $:\widehat{\mathrm{H}}:\mathcal{P}^\circ \C^2 \rightarrow \op$. We will give explicit formulas for the various parts of the K\"{a}hler structure here.

For $(\gamma,V) \in \op$ and $(\delta \gamma_j, \delta V_j) \in T_{(\gamma,V)} \op$, $j=1,2$, we define a metric $g^\op$ on $\op$ by the formula 
$$
g^\op_{(\gamma,V)}\left((\delta \gamma_1, \delta V_1),(\delta \gamma_2, \delta V_2)\right) = \frac{1}{4} \int_0^2 \left<\frac{d}{ds}\delta \gamma_1,\frac{d}{ds}\delta \gamma_2\right> + \left<\delta V_1,W\right> \left<\delta V_2,W\right> \; \mathrm{d}s,
$$
where $\frac{d}{ds}=\frac{1}{\|\gamma'\|} \frac{d}{dt}$ denotes arclength derivative and $\mathrm{d}s=\|\gamma'\| \mathrm{d}t$ denotes arclength measure. The factor of $\frac{1}{4}$ is included only as a convenient normalization whose utility is made apparent by the following proposition. The proof of the proposition is a tedious but essentially straightforward calculation, so we omit it.

\begin{prop}\label{prop:pullback_metric}
The pullback of $g^\op$ to $\mathcal{P}^\circ \C^2$ is $g^{L^2}$.
\end{prop}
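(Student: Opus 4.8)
The plan is to make the map $\widehat{\mathrm{H}}$ completely explicit and then reduce the statement to a pointwise identity in the quaternions. Writing $\Phi=(\phi,\psi)$ and $\widehat{\mathrm{H}}(\Phi)=(\gamma,V)$, the formulas for $\mathrm{F}^{-1}$ and $\widehat{\mathrm{Hopf}}$ give $\gamma'=\mathrm{Hopf}_1(\Phi)$ and $V=\mathrm{Hopf}_2(\Phi)/\|\Phi\|_{\C^2}^2$, with $W=\mathrm{Hopf}_3(\Phi)/\|\Phi\|_{\C^2}^2$; by Lemma \ref{lem:frame_hopf_map}(ii) the speed is $\|\gamma'\|=\|\Phi\|_{\C^2}^2$. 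First I would differentiate: for a variation $\delta\Phi$ of $\Phi$ one has $\delta\gamma'=D\mathrm{Hopf}_1(\Phi)[\delta\Phi]$ pointwise, and $\delta V$ is likewise an explicit pointwise expression in $\Phi$ and $\delta\Phi$. The crucial observation is that, after substituting $\frac{d}{ds}\delta\gamma=\delta\gamma'/\|\gamma'\|$ and $\mathrm{d}s=\|\gamma'\|\,\mathrm{d}t$, the pullback of $g^{\op}$ becomes the integral over $[0,2]$ of a quantity depending only on the values $\Phi(t),\delta\Phi_1(t),\delta\Phi_2(t)$ (only $\delta\gamma'$, not $\delta\gamma$ itself, enters). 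Since $g^{L^2}=\mathrm{Re}\langle\cdot,\cdot\rangle_{L^2}$ is likewise a pointwise integral, the proposition reduces to a single finite-dimensional identity at each parameter value.

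Concretely, the identity to be proved is that for every $p\in\C^2\setminus\{0\}$ and all tangent vectors $a,b\in\C^2$,
\[
\frac{1}{4}\frac{\langle D\mathrm{Hopf}_1(p)[a],D\mathrm{Hopf}_1(p)[b]\rangle}{\|p\|_{\C^2}^2}+\frac{1}{4}\|p\|_{\C^2}^2\,\langle \delta V(a),W\rangle\langle \delta V(b),W\rangle=\mathrm{Re}\langle a,b\rangle_{\C^2},
\]
where $\delta V(a)$ denotes the derivative of $\mathrm{Hopf}_2/\|\cdot\|_{\C^2}^2$ at $p$ in direction $a$. Both sides are symmetric $\R$-bilinear forms in $(a,b)$, so by polarization it suffices to treat the diagonal case $a=b$.

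For the diagonal case I would pass to the quaternionic model $\C^2\cong\mathbb{H}$ and write the variation as a right translate $a=q\eta$, so that $\eta=q^{-1}a$, and decompose $\eta=\eta_0+\vec\eta$ into real and imaginary parts. Setting $P=\mathrm{Hopf}_1(q)=\bar q\qi q$ and $Q=\mathrm{Hopf}_2(q)=\bar q\qj q$, a short computation using that $P,Q$ are pure imaginary quaternions gives $\delta\gamma'=2\eta_0P+2\,P\times\vec\eta$ and $\delta V=\frac{2}{\|q\|_{\C^2}^2}\,Q\times\vec\eta$; note that the radial ($\eta_0$) contribution to $\delta V$ cancels, reflecting that $V$ has unit length. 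Decomposing $\vec\eta$ in the orthonormal frame $(T,V,W)=(P,Q,R)/\|q\|_{\C^2}^2$, where $R=\mathrm{Hopf}_3(q)$, and using the orthogonality and equal-norm relations of Lemma \ref{lem:frame_hopf_map}(iii), the first term recovers the squared radial component and the two squared \emph{normal} components of $\eta$, while $\langle\delta V,W\rangle$ picks out exactly the remaining \emph{tangential} component of $\vec\eta$ that the first term misses. Adding the two terms reconstitutes $\|q\|_{\C^2}^2\,\|\eta\|^2$, and multiplicativity of the quaternion norm gives $\|q\|_{\C^2}^2\,\|\eta\|^2=\|a\|_{\C^2}^2=\mathrm{Re}\langle a,a\rangle_{\C^2}$, which is the diagonal identity. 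The factor $\tfrac14$ is precisely what balances the tangential and twisting contributions.

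The only real obstacle is the bookkeeping in this pointwise quaternion calculation: one must track how each of the four real directions of $\eta$ (the radial direction, the frame-twisting direction along $T$, and the two bending directions) is distributed between $\delta\gamma'$ and $\delta V$, and verify that the cross terms vanish by orthogonality of $\mathrm{Hopf}_1,\mathrm{Hopf}_2,\mathrm{Hopf}_3$. The conceptually important point, which makes the identity work, is that the tangential component of $\vec\eta$ is invisible to $\delta\gamma'$ yet is captured exactly by $\langle\delta V,W\rangle$, so the two terms of $g^{\op}$ are complementary and together account for all four real degrees of freedom.
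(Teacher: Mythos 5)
Your proposal is correct, and it fills in exactly the calculation the paper omits: the reduction to a pointwise identity is valid because $g^{\op}$ sees $\delta\gamma$ only through $\delta\gamma'=D\mathrm{Hopf}_1(\Phi)[\delta\Phi]$ and $\mathrm{d}s=\|\Phi\|_{\C^2}^2\,\mathrm{d}t$, and I have checked your quaternionic formulas ($\delta\gamma'=2\eta_0P+2P\times\vec\eta$, $\delta V=\tfrac{2}{\|q\|^2}Q\times\vec\eta$, hence $\langle\delta V,W\rangle=-2a_1$ and the sum $\|q\|^2\|\eta\|^2=\|a\|_{\C^2}^2$). This is essentially the same decomposition the paper itself uses later for the basic variations $X_{st},X_{tw},X_{b_1},X_{b_2}$ in Section \ref{sec:induced_geometric_structures}; the only cosmetic difference is that you write the variation as a right translate $q\eta$ rather than the paper's left translates $\eta q$, which changes nothing since $\|q\eta\|=\|\eta q\|=\|q\|\,\|\eta\|$.
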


Note that $g^\op$  is invariant under the rotation action of $\mathrm{SO}(3)$ and the constant frame twisting action of $\bbS^1$, so  it descends to a well-defined metric on the various quotient spaces $\op/(\R^+ \times \bbS^1)$, $\M$, et cetera. More importantly from the perspective of applications, the metric is also invariant under the reparameterization action (by composition) of the orientation-preserving diffeomorphism group $\mathrm{Diff}^+([0,2])$ on $\op$ (and by the reparameterization action of $\mathrm{Diff}^+(S^1)$ on the various loop spaces). This property is essential for elastic shape analysis applications, since the goal there is to compute geodesic distance in the quotient $\op/\mathrm{Diff}^+([0,2])$ via the process outlined in Section \ref{subsubsec:elastic_shape_analysis}.  

One might notice that the metric $g^\op$ bears a strong resemblance to the elastic planar metrics (\ref{eqn:elastic_metrics}). Indeed, a natural extension of the planar elastic metrics to spaces of framed curves is given by the four parameter family of \emph{framed curve elastic metrics}
\begin{align*}
&g^{a,b,c,d}_{(\gamma,V)}((\delta \gamma_1, \delta V_1),( \delta \gamma_2, \delta V_2)) \\
&\hspace{.3in} = \frac{1}{4} \int_0^2 a\left<\frac{d}{ds} \delta \gamma_1,V\right>\left<\frac{d}{ds} \delta \gamma_2,V\right> + b\left<\frac{d}{ds} \delta \gamma_1,W\right>\left<\frac{d}{ds} \delta \gamma_2,W\right> \\
&\hspace{1.6in}  + c\left<\frac{d}{ds} \delta \gamma_1,T\right>\left<\frac{d}{ds} \delta \gamma_2,T\right>  + d\left<\delta V_1,W\right> \left<\delta V_2,W\right> \, \mathrm{d}s,
\end{align*}
where $T=\gamma'/\|\gamma'\|$ and $W=T \times V$. Where the planar curve elastic metrics have terms comparing bending and stretching deformations of a variation, the framed curve elastic metrics have terms comparing two types of bending deformation, stretching deformation and twisting deformation (repectively). The reader can check that $g^\op=g^{1,1,1,1}$.

We now move on to the induced complex structure of $\op$. Using quaternionic notation, any variation $\delta q$ of $q \in \mathcal{P}\mathbb{H}$ can be decomposed as
\begin{equation}\label{eqn:quaternionic_variation}
\delta q = \lambda_1 \frac{q}{2} + \lambda_2 \frac{\qi q}{2} + \lambda_3 \frac{\qj q}{2} + \lambda_4 \frac{\qk q}{2}
\end{equation}
for some $\lambda_j \in \mathcal{P}\R$. The natural complex structure of $\mathcal{P}\C^2 \approx \mathcal{P}\mathbb{H}$ can be obtained by extending the relations
$$
i \cdot \frac{q}{2} = \frac{\qi q}{2},\;\; i \cdot \frac{\qj q}{2} = \frac{\qk q}{2}, \;\; i^2=-\mathrm{Id}
$$
over combinations of the form (\ref{eqn:quaternionic_variation}). We will show that it is easiest to understand the induced structure on $\op$ from this perspective.

We define four \emph{basic variations} of a framed path $(\gamma,V)$
$$
X_{st}:=(\delta \gamma_{st},\delta V_{st}), \; X_{tw}:=(\delta \gamma_{tw}, \delta V_{tw}), \; X_{b_1}:=(\delta \gamma_{b_1}, \delta V_{b_1}), \mbox{ and } X_{b_2}:=(\delta \gamma_{b_2}, \delta V_{b_2})
$$
as the variations induced by the formulas
\begin{align*}
(\delta \gamma'_{st},\delta V_{st}) &= (\|\gamma'\| T,0) = \mbox{ stretch the  tangent vector, leave $V$ unchanged}\\
(\delta \gamma'_{tw},\delta V_{tw}) &= (0, -W) =\mbox{ twist the frame in the negative direction around $\gamma$} \\
(\delta \gamma'_{b_1},\delta V_{b_1}) &= (\|\gamma'\| W, 0) = \mbox{ bend the frame in the negative direction around $V$} \\
(\delta \gamma'_{b_2},\delta V_{b_2}) &= (-\|\gamma '\|  V, T) = \mbox{ bend the frame in the negative direction around $W$}.
\end{align*}
In each case, $\delta \gamma_\bullet$ is obtained from $\delta \gamma'_\bullet$ by taking the antiderivative which is based at $\vec{0}$. The basic variations are tangent to $\op$ at $(\gamma,V)$. Indeed, the tangent space $T_{(\gamma,V)} \op$ consists of variations $(\delta \gamma, \delta V)$ satisfying the following three pointwise constraints:
\begin{equation}\label{eqn:tangent_space}
\delta\gamma(0) = \vec{0}, \;\; \left<\frac{d}{dt}\delta\gamma,V\right>+\left<\frac{d}{dt}\gamma,\delta V \right> =0 \; \mbox{ and } \; \left<\delta V,V\right>=0.
\end{equation}
The constraints correspond to basepoint preservation, orthogonality preservation and normality preservation, respectively. Each basic variation satisfies these constraints. Any variation of $(\gamma,V)$ can be written in the form
\begin{equation}\label{eqn:functional_linear_combination}
\lambda_{st} X_{st} + \lambda_{tw} X_{tw} + \lambda_{b_1} X_{b_1}+ \lambda_{b_2} X_{b_2}
\end{equation}
for some real-valued maps $\lambda_\bullet$, so we define an almost complex structure $J^\op$ on $\op$ by extending the relations
$$
J^\op \cdot X_{st} = X_{tw}, \;\;\; J^\op \cdot X_{b_1} = X_{b_2} \;\; \mbox{ and } \;\; (J^\op)^2=-\mathrm{Id}
$$
over combinations of the form (\ref{eqn:functional_linear_combination}). The basic variations and almost complex structure are depicted in Figure \ref{fig:almost_complex_structure}.

A simple calculation shows that $D\widehat{\mathrm{H}}(q)$ gives a correspondence
$$
q/2 \mapsto X_{st}, \; \qi q/2  \mapsto X_{tw}, \; \qj q/2 \mapsto X_{b_1}, \; \qk q/2 \mapsto X_{b_2}.
$$
This proves the following proposition.

\begin{figure}
\centering
\begin{overpic}[width=1\textwidth,tics=5]{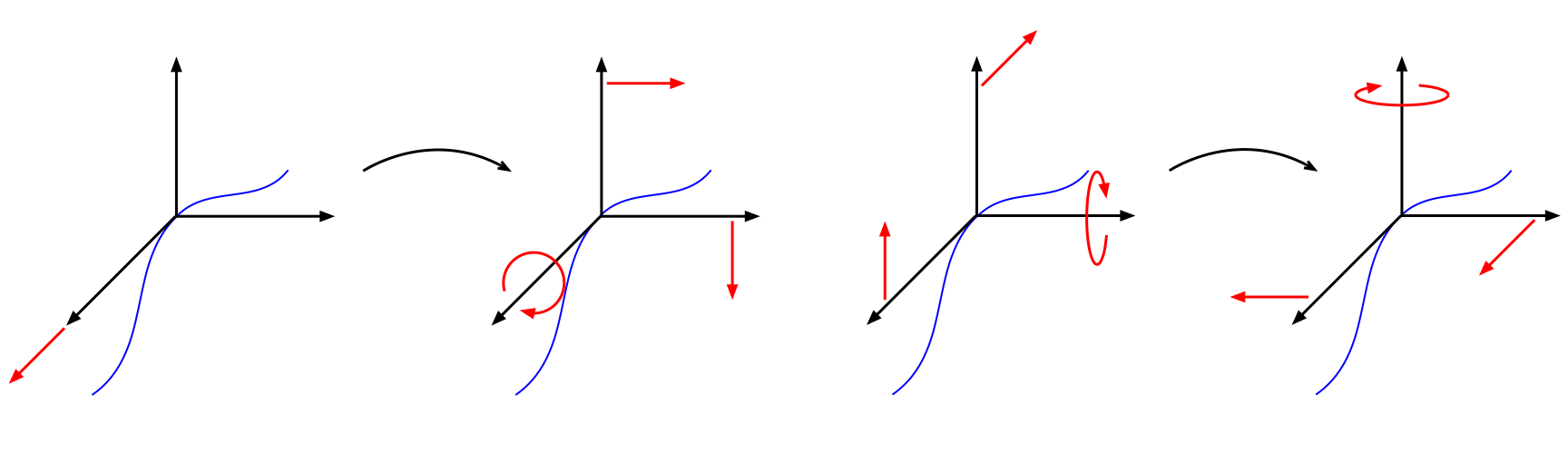}
 \put (5,13) {\large$T$}
  \put (19,18) {\large$V$}
    \put (7,25) {\large$W$}
 \put (10,10) {\textcolor{blue}{\huge$\displaystyle\gamma$}}
  \put (1,3) {\textcolor{red}{\large$\displaystyle\delta\gamma'_{st}$}}
  \put (26,22) {\large$J^\op$}
    \put (27,8) {\textcolor{red}{\large$\displaystyle\delta\gamma'_{tw}$}}
      \put (45,8) {\textcolor{red}{\large$\displaystyle\delta V_{tw}$}}
 \put (54,18) {\textcolor{red}{\large$\displaystyle\delta\gamma'_{b_1}$}}
  \put (68,10) {\textcolor{red}{\large$\displaystyle\delta V_{b_1}$}}
    \put (78,22) {\large$J^\op$}
   \put (78,7) {\textcolor{red}{\large$\displaystyle\delta\gamma'_{b_2}$}}
  \put (94,9) {\textcolor{red}{\large$\displaystyle\delta V_{b_2}$}}
\end{overpic}
\caption{The pointwise complex structure of $\op$. \label{fig:almost_complex_structure}}
\end{figure}

\begin{prop}\label{prop:complex_structure}
The map $\widehat{\mathrm{H}}$ intertwines the complex structure $i$ on $\mathcal{P}^\circ \C$ with the almost complex structure $J^\op$ on $\op$. It follows that $J^\op$ is integrable.
\end{prop}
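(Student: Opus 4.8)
The plan is to split the statement into its two assertions---that the tangent map of $\widehat{\mathrm{H}}$ carries $i$ to $J^\op$, and that integrability of $J^\op$ then follows formally---and to spend essentially all the work on the first. The heart of the matter is the correspondence of basic variations asserted just above the proposition, namely that $D\widehat{\mathrm{H}}(q)$ sends $q/2 \mapsto X_{st}$, $\qi q/2 \mapsto X_{tw}$, $\qj q/2 \mapsto X_{b_1}$, and $\qk q/2 \mapsto X_{b_2}$. Granting this, the intertwining is immediate: the complex structure on the domain is defined by $i\cdot q/2 = \qi q/2$ and $i\cdot \qj q/2 = \qk q/2$, which $D\widehat{\mathrm{H}}$ sends to $X_{tw}$ and $X_{b_2}$, and these are exactly $J^\op X_{st}$ and $J^\op X_{b_1}$ by the definition of $J^\op$.

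To establish the correspondence I would work in quaternionic coordinates, writing $\widehat{\mathrm{H}}(\Phi)=(\gamma,V)$ with $\gamma' = \mathrm{Hopf}_1(\Phi)$ and $V = \mathrm{Hopf}_2(\Phi)/\|\Phi\|_{\C^2}^2$, read off from the inverse frame map together with the normalization in $\widehat{\mathrm{Hopf}}$. Differentiating the columns $\mathrm{Hopf}_1(q)=\bar q\,\qi\,q$, $\mathrm{Hopf}_2(q)=\bar q\,\qj\,q$, $\mathrm{Hopf}_3(q)=\bar q\,\qk\,q$ in a direction $\delta q$ yields terms of the form $\overline{\delta q}\,\qi\,q + \bar q\,\qi\,\delta q$ (and similarly for $\qj,\qk$), which I would evaluate on the four directions $q/2,\,\qi q/2,\,\qj q/2,\,\qk q/2$ using only the quaternion relations and the scaling and orthogonality identities of Lemma \ref{lem:frame_hopf_map}. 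For example, in the direction $\qi q/2$ the cancellations give $D\mathrm{Hopf}_1 = 0$ (hence $\delta\gamma'=0$) and $D\mathrm{Hopf}_2 = -\mathrm{Hopf}_3$, while $\|q\|^2$ is unchanged to first order, so $\delta V = -\mathrm{Hopf}_3(\Phi)/\|\Phi\|_{\C^2}^2 = -W$; this is precisely $X_{tw}$. The directions $q/2$, $\qj q/2$, $\qk q/2$ are handled identically and reproduce $X_{st}$, $X_{b_1}$, $X_{b_2}$. Throughout one must remember to recover $\delta\gamma$ from $\delta\gamma'$ by the antiderivative based at $\vec 0$, matching the convention in the definition of the basic variations.

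With the correspondence in hand, the intertwining extends from these four directions to all of $T_\Phi\mathcal{P}^\circ\C^2$ by linearity over functional coefficients: since the four directions form a pointwise basis of $\mathbb{H}$ (as $\Phi(t)\neq\vec 0$), any variation is $\sum_\bullet \lambda_\bullet\cdot(\text{basic direction})$ with $\lambda_\bullet\in\mathcal{P}\R$, and $D\widehat{\mathrm{H}}$---acting pointwise on the coefficients at the level of $\delta\gamma'$ and $\delta V$---sends it to $\sum_\bullet \lambda_\bullet X_\bullet$ with the same coefficient functions. Because $i$ and $J^\op$ are defined by the very same permutation-with-signs of the coefficient slots, they agree under $D\widehat{\mathrm{H}}$, giving $D\widehat{\mathrm{H}}\circ i = J^\op\circ D\widehat{\mathrm{H}}$. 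For integrability I would avoid any appeal to Newlander--Nirenberg, which is delicate in the Fr\'echet setting, and instead argue by transfer of structure: $i$ is the honest complex structure on the open subset $\mathcal{P}^\circ\C^2$ of the complex vector space $\mathcal{P}\C^2$, and since $\widehat{\mathrm{H}}$ is a local diffeomorphism (Proposition \ref{prop:kahler_structure_for_paths}) intertwining $i$ and $J^\op$, it is locally biholomorphic; pulling the holomorphic charts of $\mathcal{P}^\circ\C^2$ back through local inverses of $\widehat{\mathrm{H}}$ produces a holomorphic atlas on $\op$ inducing $J^\op$, so $J^\op$ is integrable. The main obstacle is purely the sign bookkeeping in the quaternionic differentiation, compounded by having to treat the pointwise frame response $\delta V$ and the antidifferentiated base response $\delta\gamma$ separately; once the four model computations are done, everything else is formal.
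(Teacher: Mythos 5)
Your proposal is correct and follows essentially the same route as the paper, which establishes the correspondence $q/2 \mapsto X_{st}$, $\qi q/2 \mapsto X_{tw}$, $\qj q/2 \mapsto X_{b_1}$, $\qk q/2 \mapsto X_{b_2}$ by direct computation with the quaternionic form of the Hopf map and then deduces the intertwining and integrability formally (the paper leaves the computation and the transfer-of-structure step implicit, whereas you spell them out). Your quaternionic differentiation in the sample direction $\qi q/2$ checks out, and your chart-pullback argument for integrability is exactly the intended justification of the final sentence.
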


Finally, we define a non-degenerate, closed 2-form $\omega^\op$ on $\op$ by the formula $\omega^\op_\cdot(\cdot,\cdot)=g^\op_\cdot(J^\op \cdot, \cdot)$. The conclusion of this subsection is that $(g^\op,\omega^\op,J^\op)$ is the K\"{a}hler structure on $\op$ corresponding to the natural K\"{a}hler structure of $\mathcal{P}^\circ \C^2$. It follows from Corollary \ref{cor:projective_spaces} and Theorem \ref{thm:grassmannian} that the spaces $\op/(\R^+ \times \bbS^1)$, $\op_{fp}/(\R^+ \times \bbS^1)$ and $\M$ inherit K\"{a}hler structures as well, as each of these spaces can be viewed as a K\"{a}hler reduction of $\op$. We will abuse notation and continue to use $g^\op$, $J^\op$ and $\omega^\op$ to denote the induced structures on these spaces. 

%%%%%%%%%%%%%%%%%%%%%%%%%%%%%%%%%%%%%%%%%%%%%%%%%%%%%%%%%%%%%%%%%%%
\section{The Frame Twisting Action}\label{sec:frame_twisting_action}
%%%%%%%%%%%%%%%%%%%%%%%%%%%%%%%%%%%%%%%%%%%%%%%%%%%%%%%%%%%%%%%%%%%

%%%%%%%%%%%%%%%%%%%%%%%%%%%%%%%%%%%%%%%%%%%%%%%%%%%%%%%%%%%%%%%%%%%
\subsection{Isometric Immersion Space}\label{subsec:smooth_loop_spaces}
%%%%%%%%%%%%%%%%%%%%%%%%%%%%%%%%%%%%%%%%%%%%%%%%%%%%%%%%%%%%%%%%%%%

In this section we introduce an action of the based loop space $\ls$ on $\M$, with the main goal being to use this action to relate $\M$ to the isometric immersion space of Millson and Zombro \cite{millson}. More precisely, let
$$
\mathrm{IsoImm}(S^1,\R^3):=\left\{\gamma \in \mathrm{Imm}(S^1,\R^3) \mid \|\gamma'(t)\|=1 \mbox{ for all $t \in S^1$}, \; \gamma(0)=\vec{0}\right\}/\mathrm{SO}(3)
$$
denote the \emph{moduli space of isometric immersions}. Recall from the introduction that it is shown in \cite{millson} that this space is realized as the symplectic reduction of $\mathcal{L}\bbS^2$ by the natural action $\mathrm{SO}(3)$ and that this gives an infinite-dimensional version of the results of \cite{kapovich} on polygon space $\mathrm{Pol}(n,\vec{r})$. In this section we will show that  $\mathrm{IsoImm}(S^1,\R^3)$ is realized as the symplectic reduction of $\mathcal{M}$ by the  action of $\mathcal{L}\bbS^1/\bbS^1$. This gives a smooth version of the main result of  \cite{hausmann}, which says that the polygon space $\mathrm{Pol}(n,\vec{r})$ may be realized as the symplectic reduction of the Grassmannian $\mathrm{Gr}_2(\C^n)$ by the action of $\mathrm{U}(1)^n/\mathrm{U}(1)$.

%%%%%%%%%%%%%%%%%%%%%%%%%%%%%%%%%%%%%%%%%%%%%%%%%%%%%%%%%%%%%%%%%%%
\subsection{The Actions of $\mathcal{P}\bbS^1$ and $\mathcal{L}\bbS^1$}\label{sec:loop_group_action}
%%%%%%%%%%%%%%%%%%%%%%%%%%%%%%%%%%%%%%%%%%%%%%%%%%%%%%%%%%%%%%%%%%%

Recall that $\bbS^1$ acts on framed path space $\op$ by constant frame twisting. We extend this idea to  define an action of the path group $\mathcal{P}\bbS^1$ on $\op$ by \emph{nonconstant} frame twisting. For $e^{i \alpha} \in \mathcal{P}\bbS^1$ and $(\gamma,V) \in \op$, the action is given explicitly by
\begin{equation}\label{eqn:frame_twisting_action}
(e^{i\alpha} \cdot (\gamma,V))(t)=(\gamma(t),\cos(2\alpha(t))V(t) + \sin(2\alpha(t))W(t)), \;\;\; W=\frac{\gamma'}{\|\gamma'\|} \times V.
\end{equation}
The action of $\mathcal{P}\bbS^1$ is transitive on the set of framings of a fixed path $\gamma$. Moreover,   Lemma \ref{lem:frame_hopf_equivariance} implies that the equivariance property 
$$
\widehat{\mathrm{H}}(e^{i\alpha} \Phi) = e^{i\alpha} \cdot \widehat{\mathrm{H}}(\Phi);
$$
holds for $e^{i\alpha} \in \mathcal{P}\bbS^1$ and $\Phi \in \mathcal{P}\C^2$; that is, the frame twisting action is given in complex coordinates by pointwise multiplication.

The action of $\mathcal{P}\bbS^1$ on $\op$ restricts to give the frame twisting action of $\mathcal{L}\bbS^1$ on $\cl$. The factor of 2 that appears in formula (\ref{eqn:frame_twisting_action}) implies that the frame-twisting action preserves path components of $\mathcal{C}$. The action also restricts to $\mathcal{C}/\R^+$ and the complex version of this action is given by pointwise multiplication in the Stiefel manifold.

When passing to the quotient $\mathcal{M}$ we obtain a well-defined action of $\mathcal{L}\bbS^1$, since frame twisting commutes with rigid rotations. However, this action is not free. This is because framed loops are identified in $\mathcal{M}$ if they differ by a global frame twist---i.e., we have already taken the quotient by the subgroup $\bbS^1 \subset \mathcal{L}\bbS^1$ to obtain $\mathcal{M}$. We will thus consider the free action of the \emph{based loop group} $\mathcal{L}\bbS^1/\bbS^1$ on $\mathcal{M}$. Let $[e^{i\alpha}] \in \mathcal{L}\bbS^1/\bbS^1$ denote the equivalence class of $e^{i\alpha} \in \mathcal{L}\bbS^1$. The corresponding action on $[\Phi] \in \Gr$ is given by
$$
[e^{i\alpha}] \cdot [\Phi] = [e^{i\alpha} \Phi].
$$

%%%%%%%%%%%%%%%%%%%%%%%%%%%%%%%%%%%%%%%%%%%%%%%%%%%%%%%%%%%%%%%%%%%
\subsection{Hamiltonian Structure}\label{sec:hamiltonian_structure}
%%%%%%%%%%%%%%%%%%%%%%%%%%%%%%%%%%%%%%%%%%%%%%%%%%%%%%%%%%%%%%%%%%%

It is easy to show that the frame twisting action of $\ls$ on $\M$ is by isometries, particularly when the action is expressed in the Grassmannian formalism. We now wish to show that the action is also Hamiltonian. Using the fact that the exponential map $\mathcal{L}\R \rightarrow \mathcal{L}\bbS^1$ is given explicitly by $\alpha \mapsto e^{i\alpha}$, one is able to show that the induced vector field of  $\alpha \in \mathcal{L}\R$ on $\St$ is given by $X_\alpha |_\Phi = i \alpha \Phi$. For computations, the vector field induced by $[\alpha] \in \mathcal{L}\R/\R$ on $\Gr$  is represented by
$$
X_{[\alpha]}|_{[\Phi]} = \mathrm{proj}(i\alpha \Phi),
$$
where $\mathrm{proj}$ is orthogonal projection
\begin{equation}\label{eqn:induced_vector_field}
\mathrm{proj}:T_\Phi \St \rightarrow T_\Phi^{hor} \St \approx T_{[\Phi]} \Gr
\end{equation}
onto the codimension-4 horizontal space. The projection is necessary since $\mathcal{L}\bbS^1/\bbS^1$-orbits of elements of $\St$ are not $L^2$-orthogonal to $\mathrm{U}(2)$-orbits (although the orbits are transverse). Note that this is formula  depends on the choice of representation $\Phi \in [\Phi]$, but does not depend on the choice of representation $\alpha \in [\alpha]$. Indeed, for $c \in \R$, $X_{[\alpha + c]}|_{[\Phi]} = \mathrm{proj}(i \alpha \Phi + ci\Phi)$ and $i\Phi$ is tangent to the $\mathrm{U}(2)$-orbit of $\Phi$.

We define an inner product on $\mathcal{L}\R/\R$ by the formula
\begin{equation}\label{eqn:inner_product_on_loops_R}
\left<[\alpha],[\beta]\right>_{\mathcal{L}\R/\R} := \frac{1}{2}\int_0^2 \alpha \beta \; \mathrm{d}t - \frac{1}{4}\int_0^2 \alpha \; \mathrm{d}t \int_0^2 \beta \; \mathrm{d}t, \;\;\; \alpha,\,\beta \in \mathcal{L}\R
\end{equation}
and use this to injectively map $\mathcal{L}\R/\R$ into its dual space. We then define our candidate for the moment map for the $\mathcal{L}\bbS^1/\bbS^1$-action to be
\begin{align*}
\mu=\mu_{\mathcal{L}\bbS^1/\bbS^1}:\mathrm{Gr}_2(\mathcal{L}\C) &\rightarrow \mathcal{L}\R/\R \\
[\phi,\psi] &\mapsto [|\phi|^2+|\psi|^2].
\end{align*}
In terms of framed loops, the momentum map has a simple interpretation: if $\widehat{\mathrm{H}}([\Phi])=[\gamma,V]$ then $\mu([\Phi])=[\|\gamma'\|]$. 

For fixed $[\alpha] \in \ls$, let $f_{[\alpha]}:\Gr \rightarrow \R$ denote the map
$$
f_{[\alpha]}([\Phi]) = \left<\mu([\Phi]),[\alpha]\right>_{\mathcal{L}\R/\R}.
$$
Then we wish to show that 
\begin{equation}\label{eqn:moment_map_condition}
Df_{[\alpha]}([\Phi])(\delta \Phi) = \omega_{[\Phi]}^{L^2}(\delta \Phi, X_{[\alpha]}|_{[\Phi]})
\end{equation}
To perform the calculation, we lift $[\Phi] \in \Gr$ to $\Phi \in \St$ and consider $\delta \Phi \in T^{hor}_\Phi \St$. The derivative on the lefthand side of (\ref{eqn:moment_map_condition}) is then given by
$$
\left.\frac{d}{d\epsilon}\right|_{\epsilon=0} \frac{1}{2}\int_0^2 (|\phi + \epsilon \delta\phi|^2 + |\psi + \epsilon \delta \psi|^2)\alpha \; \mathrm{d}t - \frac{1}{2} \int_0^2 \alpha \; \mathrm{d}t = \int_0^2 \mathrm{Re}(\phi \overline{\delta \phi} + \psi \overline{\delta \psi})\alpha \; \mathrm{d}t.
$$
The righthand side of (\ref{eqn:moment_map_condition}) is equal to 
\begin{align*}
-\mathrm{Im}\left<\delta \Phi,  X_{[\alpha]}|_{[\Phi]} \right>_{L^2} &= -\mathrm{Im}\left<\delta \Phi, i\alpha \Phi \right>_{L^2} = \mathrm{Re} \left<\alpha \Phi, \delta \Phi\right>_{L^2}  =  \int_0^2 \mathrm{Re}\left(\phi \overline{\delta \phi} + \psi \overline{\delta \psi}\right)\alpha \; \mathrm{d}t.
\end{align*}
We have omitted the projection from the notation for $X_{[\alpha]}|_{[\Phi]}$, since the symplectic form vanishes along the $\mathrm{U}(2)$-vertical direction by construction.

%%%%%%%%%%%%%%%%%%%%%%%%%%%%%%%%%%%%%%%%%%%%%%%%%%%%%%%%%%%%%%%%%%%
\subsection{Principal Bundle Structure}\label{sec:principal_bundle}
%%%%%%%%%%%%%%%%%%%%%%%%%%%%%%%%%%%%%%%%%%%%%%%%%%%%%%%%%%%%%%%%%%%

Consider the \emph{moduli space of (unframed) loops},
$$
\mathcal{B} := \mathrm{Imm}(S^1,\R^3)/\mathrm{Sim}
$$
Using our usual conventions, we consider elements of $\mathcal{B}$ as $\mathrm{SO}(3)$-orbits of based immersions with fixed length $2$. We denote the $\mathrm{SO}(3)$-orbit of a based immersion $\gamma$ by $[\gamma]$. The moduli space $\mathcal{B}$ is a Fr\'{e}chet manifold: the space of based immersions is an open submanifold of the vector space of based loops, Hamilton's Implicit Function Theorem can be used to show that the fixed length subspace is a manifold of codimension-1, and one can construct smooth cross-sections to the $\mathrm{SO}(3)$-orbits by an argument similar to \cite[Lemma 1.6]{millson}.

The main result of this section is the following. 

\begin{thm}\label{thm:principal_bundle}
Each component of the moduli space $\mathcal{M}$ is an $\ls$-bundle over $\mathcal{B}$.
\end{thm}

We will focus on the component $\M_{ev}$---the proof can be translated to odd-linking framed curves via the diffeomorphism $\M_{ev} \approx \M_{od}$. The proof follows from a pair of lemmas, the first of which is proved by an elementary computation. It is a statement about the total twist functional. For a framed path $(\gamma,V)$, the \emph{twist rate} is defined as
$$
\mathrm{tw}(\gamma,V):=\left<\frac{d}{ds} V, W \right>, \;\;\;\; W=\frac{\gamma'}{\|\gamma'\|} \times V
$$
and the \emph{total twist} is 
$$
\mathrm{Tw}(\gamma,V) := \frac{1}{2\pi} \int_0^2 \mathrm{tw}(\gamma,V) \; \mathrm{d}s.
$$
These quantities are invariant under rigid motions and constant frame twists, so they are well-defined on equivalence classes $[\gamma,V]$.

\begin{lem}\label{lem:twist_rate_change}
Let $[\gamma,V] \in \M$ and $[e^{i\alpha}] \in \ls$. Then 
\begin{equation}\label{eqn:twist_formula}
\mathrm{tw}([e^{i\alpha}]\cdot [\gamma,V]) = -\frac{2 \alpha'}{\|\gamma'\|} + \mathrm{tw}(\gamma,V).
\end{equation}
It follows that 
$$
\mathrm{Tw}([e^{\i\alpha}] \cdot [\gamma,V]) = \mathrm{Tw}(\gamma,V) - \frac{1}{\pi}(\alpha(2)-\alpha(0)).
$$
\end{lem}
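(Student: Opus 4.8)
The plan is to prove the twist-rate formula \eqref{eqn:twist_formula} by direct substitution into the definition of $\mathrm{tw}$, and then to obtain the total-twist formula by integrating. Since the frame-twisting action \eqref{eqn:frame_twisting_action} fixes the base curve $\gamma$, the unit tangent $T = \gamma'/\|\gamma'\|$ is unchanged and only the normal frame rotates: writing $(\gamma,\tilde V) = e^{i\alpha}\cdot(\gamma,V)$ we have $\tilde V = \cos(2\alpha)V + \sin(2\alpha)W$ with $W = T\times V$. First I would compute the new co-normal $\tilde W = T\times\tilde V$; using $T\times V = W$ and $T\times W = -V$ (from the right-handedness of the orthonormal triple $(T,V,W)$) this gives $\tilde W = \cos(2\alpha)W - \sin(2\alpha)V$.

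Next I would differentiate $\tilde V$ with respect to arclength, recalling that $\frac{d}{ds} = \frac{1}{\|\gamma'\|}\frac{d}{dt}$, so that each derivative falling on $\cos(2\alpha)$ or $\sin(2\alpha)$ produces a factor $\frac{2\alpha'}{\|\gamma'\|}$. Substituting into $\mathrm{tw}(\gamma,\tilde V) = \left<\frac{d}{ds}\tilde V, \tilde W\right>$ and expanding, the expression collapses using the pointwise orthonormality of $\{V,W\}$ together with the standard frame identities $\left<\frac{d}{ds}V, V\right> = \left<\frac{d}{ds}W, W\right> = 0$, $\left<\frac{d}{ds}V, W\right> = \mathrm{tw}(\gamma,V)$ and $\left<\frac{d}{ds}W, V\right> = -\mathrm{tw}(\gamma,V)$, the last pair being related by differentiating $\left<V,W\right> = 0$. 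The terms carrying $\mathrm{tw}(\gamma,V)$ recombine via $\cos^2 + \sin^2 = 1$ to leave exactly $\mathrm{tw}(\gamma,V)$, and the terms carrying the derivative of the angle likewise recombine to leave a single term proportional to $\frac{\alpha'}{\|\gamma'\|}$, which with the correct orientation conventions is the $-\frac{2\alpha'}{\|\gamma'\|}$ appearing in \eqref{eqn:twist_formula}.

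For the total-twist statement I would integrate the twist-rate formula against arclength measure. The key simplification is that $\mathrm{d}s = \|\gamma'\|\,\mathrm{d}t$, so the $\|\gamma'\|$ in the denominator cancels and $\int_0^2 \frac{2\alpha'}{\|\gamma'\|}\,\mathrm{d}s = 2\int_0^2 \alpha'\,\mathrm{d}t = 2(\alpha(2) - \alpha(0))$ is a pure boundary term; multiplying by $\frac{1}{2\pi}$ yields the claimed correction $-\frac{1}{\pi}(\alpha(2)-\alpha(0))$, while the remaining integrand reproduces $\mathrm{Tw}(\gamma,V)$. I would also remark that since $e^{i\alpha}$ is a genuine loop, $\alpha(2)-\alpha(0) \in 2\pi\Z$, so the total twist changes by an integer, consistent with its interpretation as a winding / self-linking quantity.

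The computation has no serious obstacle; it is precisely the ``elementary computation'' the text advertises. The only point requiring real care is sign bookkeeping, since the sign of the $\alpha'$ term is pinned down entirely by orientation conventions: the factor of $2$ and the direction of rotation built into \eqref{eqn:frame_twisting_action}, the right-handed ordering of $(T,V,W)$ encoded in $T\times W = -V$, and the sign in the definition $\mathrm{tw} = \left<\frac{d}{ds}V, W\right>$, which forces the antisymmetry $\left<\frac{d}{ds}W, V\right> = -\left<\frac{d}{ds}V, W\right>$. I would fix all of these conventions at the outset and carry them through consistently, since a slip in any one of them flips the sign of the entire correction term.
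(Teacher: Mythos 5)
Your overall strategy is the right one, and since the paper omits the proof entirely (the lemma is dismissed as ``proved by an elementary computation''), a direct substitution into the definition of $\mathrm{tw}$ followed by integration against $\mathrm{d}s$ is surely what is intended. Your handling of the total-twist statement is correct as written: $\mathrm{d}s = \|\gamma'\|\,\mathrm{d}t$ cancels the denominator and leaves a pure boundary term. (One refinement: since $e^{i\alpha} \in \mathcal{L}\bbS^1$ forces $\alpha(2)-\alpha(0) = 2\pi k$ for some $k \in \Z$, the total twist changes by the \emph{even} integer $-2k$, which is exactly what is needed for the invariant $\mathrm{Tw}_2 = \mathrm{Tw} \bmod 2$ introduced immediately after the lemma.)

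However, the one step you explicitly defer---the sign of the $\alpha'$ term---is precisely where your outline does not go through. Carrying out your own computation with the conventions exactly as printed, namely $\tilde V = \cos(2\alpha)V + \sin(2\alpha)W$ from (\ref{eqn:frame_twisting_action}), $\tilde W = T \times \tilde V = \cos(2\alpha)W - \sin(2\alpha)V$, and $\mathrm{tw} = \left<\frac{d}{ds}V, W\right>$, the four surviving terms are $2\alpha_s\sin^2(2\alpha) + \cos^2(2\alpha)\,\mathrm{tw} + 2\alpha_s\cos^2(2\alpha) + \sin^2(2\alpha)\,\mathrm{tw}$ with $\alpha_s = \alpha'/\|\gamma'\|$, which sum to $+\frac{2\alpha'}{\|\gamma'\|} + \mathrm{tw}(\gamma,V)$---the \emph{opposite} sign from (\ref{eqn:twist_formula}). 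No amount of orientation bookkeeping consistent with that displayed action formula rescues the minus sign; the discrepancy traces to an internal sign slip in the paper itself. From the explicit matrix (\ref{eqn:hopf_map_complex_coords}) one finds $\mathrm{Hopf}_2(e^{i\theta}(z,w)) = \cos(2\theta)\mathrm{Hopf}_2(z,w) - \sin(2\theta)\mathrm{Hopf}_3(z,w)$ (test it at $(z,w)=(1,0)$), so pointwise multiplication by $e^{i\alpha}$ actually induces $V \mapsto \cos(2\alpha)V - \sin(2\alpha)W$, and with that formula your computation does produce $-\frac{2\alpha'}{\|\gamma'\|} + \mathrm{tw}(\gamma,V)$. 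The cleanest way to pin the sign is to work in complex coordinates, where the action is unambiguously $\Phi \mapsto e^{i\alpha}\Phi$: using the identity $\mathrm{tw}(\gamma,V) = 2\,\mathrm{Im}\left<\Phi,\Phi'\right>_{\C^2}/\|\Phi\|_{\C^2}^4$ stated just after (\ref{eqn:diff_momentum_map}), together with $(e^{i\alpha}\Phi)' = i\alpha' e^{i\alpha}\Phi + e^{i\alpha}\Phi'$, one gets $\mathrm{Im}\left<e^{i\alpha}\Phi,(e^{i\alpha}\Phi)'\right>_{\C^2} = -\alpha'\|\Phi\|_{\C^2}^2 + \mathrm{Im}\left<\Phi,\Phi'\right>_{\C^2}$, and dividing by $\|\Phi\|_{\C^2}^4 = \|\gamma'\|^2$ yields (\ref{eqn:twist_formula}) at once. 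You should either take this route or state explicitly which sign convention for the twisting action you are using, since the minus sign is the one the paper relies on later (in the CTMF construction and the proof of Theorem \ref{thm:principal_bundle}).
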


From the second statement of the lemma, we have
$$
\mathrm{Tw}(\gamma,V_1) \; \mathrm{mod} \, 2 = \mathrm{Tw}(\gamma, V_2) \; \mathrm{mod} \, 2
$$
for any even-linking framings $V_1$, $V_2$ of the same base curve $\gamma$. We are therefore able to assign an invariant $\mathrm{Tw}_2(\gamma)$ to any loop $\gamma$ via the formula $\mathrm{Tw}_2(\gamma):=\mathrm{Tw}(\gamma,V) \; \mathrm{mod} \, 2$, where $V$ is any choice of even-linking framing of $\gamma$.

\begin{lem}
Let $[\gamma,V] \in \M_{ev}$ and let $\alpha(\gamma,V) \in \mathcal{P}\R$ be defined by
\begin{equation}\label{eqn:alpha_k}
\alpha(\gamma,V)(t)=\frac{1}{2}\int_0^t \mathrm{tw}(\gamma,V) \; \mathrm{d}s - \frac{\pi \mathrm{Tw}_2(\gamma)}{2} \int_0^t \; \mathrm{d}s
\end{equation}
Then $e^{i\alpha(\gamma,V)} \in \mathcal{L}\bbS^1$ and  $[e^{i\alpha(\gamma,V)}] \cdot [\gamma,V]$ has constant twist rate equal to $\pi \mathrm{Tw}_2(\gamma)$ and total twist equal to $\mathrm{Tw}_2(\gamma)$. 
\end{lem}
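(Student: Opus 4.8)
The plan is to establish the three assertions---that $e^{i\alpha(\gamma,V)}$ is a genuine loop, that the twisted frame has constant twist rate $\pi\mathrm{Tw}_2(\gamma)$, and that its total twist is $\mathrm{Tw}_2(\gamma)$---by direct computation, feeding the explicit $\alpha$ of \eqref{eqn:alpha_k} into the transformation formula \eqref{eqn:twist_formula} of Lemma \ref{lem:twist_rate_change}. Throughout I abbreviate $\mathrm{tw}=\mathrm{tw}(\gamma,V)$ and fix an arbitrary real representative of $\mathrm{Tw}_2(\gamma)$. First I would compute $\alpha'=d\alpha/dt$: since $\mathrm{d}s=\|\gamma'\|\,\mathrm{d}t$, the fundamental theorem of calculus gives $\frac{d}{dt}\int_0^t \mathrm{tw}\,\mathrm{d}s = \mathrm{tw}\,\|\gamma'\|$ and $\frac{d}{dt}\int_0^t \mathrm{d}s = \|\gamma'\|$, whence $\alpha'=\tfrac12\bigl(\mathrm{tw}-\pi\mathrm{Tw}_2(\gamma)\bigr)\|\gamma'\|$ and therefore $\tfrac{2\alpha'}{\|\gamma'\|}=\mathrm{tw}-\pi\mathrm{Tw}_2(\gamma)$. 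Substituting into \eqref{eqn:twist_formula},
\[
\mathrm{tw}\bigl([e^{i\alpha}]\cdot[\gamma,V]\bigr)=-\frac{2\alpha'}{\|\gamma'\|}+\mathrm{tw}=\pi\mathrm{Tw}_2(\gamma),
\]
which is the claimed constant twist rate. Integrating over arclength and using $\mathrm{length}(\gamma)=2$ then yields $\mathrm{Tw}\bigl([e^{i\alpha}]\cdot[\gamma,V]\bigr)=\frac{1}{2\pi}\int_0^2 \pi\mathrm{Tw}_2(\gamma)\,\mathrm{d}s=\mathrm{Tw}_2(\gamma)$, giving the total twist assertion.

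The step that genuinely uses the hypothesis $[\gamma,V]\in\M_{ev}$, and which I regard as the crux, is verifying that $e^{i\alpha}\in\mathcal{L}\bbS^1$, i.e. that $e^{i\alpha}$ descends to a smooth map on $S^1=[0,2]/(0\sim 2)$. Smoothness across the basepoint is automatic: $\mathrm{tw}$ and $\|\gamma'\|$ are smooth periodic functions because $\gamma'$ and $V$ are closed, so $\alpha'=\tfrac12(\mathrm{tw}-\pi\mathrm{Tw}_2(\gamma))\|\gamma'\|$ and all higher derivatives of $\alpha$ are periodic. It remains to check that the value closes up, namely $\alpha(2)-\alpha(0)\in 2\pi\Z$. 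Since $\alpha(0)=0$ and $\int_0^2\mathrm{tw}\,\mathrm{d}s=2\pi\,\mathrm{Tw}(\gamma,V)$, I compute
\[
\alpha(2)=\pi\,\mathrm{Tw}(\gamma,V)-\pi\,\mathrm{Tw}_2(\gamma)=\pi\bigl(\mathrm{Tw}(\gamma,V)-\mathrm{Tw}_2(\gamma)\bigr).
\]
By the very definition $\mathrm{Tw}_2(\gamma)\equiv\mathrm{Tw}(\gamma,V)\ \mathrm{mod}\ 2$, the difference $\mathrm{Tw}(\gamma,V)-\mathrm{Tw}_2(\gamma)$ is an even integer, so $\alpha(2)\in 2\pi\Z$ and $e^{i\alpha}$ indeed closes into a smooth loop.

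Finally I would remark that the argument is insensitive to the choice of real representative of $\mathrm{Tw}_2(\gamma)$: two representatives differ by an even integer, which keeps $\alpha(2)\in 2\pi\Z$ and hence preserves the loop-closure conclusion, while shifting the target constants $\pi\mathrm{Tw}_2(\gamma)$ and $\mathrm{Tw}_2(\gamma)$ consistently so that the statement holds verbatim for whichever representative is used. No obstacle remains beyond the closure check, since the smoothness of $\alpha$ and the well-definedness of $\mathrm{tw}$, $\mathrm{Tw}$ and $\mathrm{Tw}_2$ on the class $[\gamma,V]$ were already arranged in the preceding discussion.
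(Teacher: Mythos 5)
Your proof is correct and follows essentially the same route as the paper's: compute $\alpha'=\tfrac12(\mathrm{tw}-\pi\mathrm{Tw}_2(\gamma))\|\gamma'\|$, feed it into the twist transformation formula of Lemma \ref{lem:twist_rate_change} to get the constant twist rate, and verify closure from $\alpha(0)=0$ together with $\alpha(2)=\pi(\mathrm{Tw}(\gamma,V)-\mathrm{Tw}_2(\gamma))\in 2\pi\Z$. The only cosmetic difference is your closing remark about choosing a representative of $\mathrm{Tw}_2(\gamma)$, which is unnecessary since the paper defines $\mathrm{Tw}_2(\gamma)$ as a specific value in $[0,2)$.
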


\begin{proof}
To see that $e^{i \alpha(\gamma,V)}$ is a smooth loop, note that its derivative
$$
\alpha(\gamma,V)'(t)=\frac{1}{2}\mathrm{tw}(\gamma(t),V(t))\|\gamma'(t)\|-\frac{\pi}{2}\mathrm{Tw}_2(\gamma)\|\gamma'(t)\|
$$
is a smooth loop in $\R$, that $\alpha(\gamma,V)(0)=0$ and that
$$
\alpha(\gamma,V)(2)=\pi \mathrm{Tw}(\gamma,V) - \pi \left( \mathrm{Tw}(\gamma,V) \; \mathrm{mod} \, 2 \right) = \pi \cdot 2k
$$
for some $k \in \Z$. Formula (\ref{eqn:twist_formula}) shows that $\mathrm{tw}([e^{i\alpha(\gamma,V)(t)}] \cdot [\gamma(t),V(t)])$ is given by
$$
-\frac{2}{\|\gamma'(t)\|}\left(\frac{1}{2} \mathrm{tw}(\gamma(t),V(T)) \|\gamma'(t)\| - \frac{\pi}{2} \mathrm{Tw}_2(\gamma) \|\gamma'(t)\|\right) + \mathrm{tw}(\gamma(t),V(t)) = \pi \mathrm{Tw}_2(\gamma).
$$
It follows immediately that $\mathrm{Tw}([e^{i\alpha(\gamma,V)}] \cdot [\gamma,V]) = \mathrm{Tw}_2(\gamma)$.
\end{proof}

For any loop $\gamma$, we can assign a relative framing $V(\gamma)$ called the \emph{constant twist minimizing framing (CTMF)} which is characterized up to constant frame twists by having constant twist rate equal to $\pi \mathrm{Tw}_2(\gamma)$. The CTMF is given by $e^{i\alpha(\gamma,V)}(\gamma,V)$ where $V$ is an arbitrary framing. The framing $V(\gamma)$ is referred to as \emph{constant twist minimizing} for the following reason. If $\gamma$ is an embedded loop, then the White-Fuller-C\u{a}lug\u{a}reanu Theorem states that for any choice of framing $V$, $\mathrm{Tw}(\gamma,V) + \mathrm{Wr}(\gamma) = \mathrm{Lk}(\gamma,V)$, where $\mathrm{Wr}(\gamma)$ is the \emph{writhe} of $\gamma$ (a geometric invariant of $\gamma$---see \cite{dennis}). It follows that $\mathrm{Tw}_2(\gamma)$ is the minimum possible positive total twist of any framing of $\gamma$ of even linking number, and that $\pi \mathrm{Tw}_2(\gamma)$ is the minimium possible positive constant twist rate for such a framing.

Recall from Section \ref{sec:projective_spaces} that the Bishop framing of a path $\gamma$ is a relative framing defined by evolving an initial vector $V(0)$ along $\gamma$ with no intrinsic twisting; i.e., the Bishop framing is a solution of the ODE $\mathrm{tw}(\gamma,V)=0$. For a generic closed loop $\gamma$, the Bishop framing does not give a \emph{closed} relative framing. Those loops $\gamma$ which do admit a closed Bishop framing will play a special role in our discussion, since they are the curves at which $\mathrm{Tw}_2$ is discontinuous.  More precisely, the map $\mathrm{Tw}_2:\mathcal{B} \rightarrow [0,2)$ is well-defined but discontinuous at curves that admit framings with total twist equal to an even integer---this is simply because the mod 2 map $\R \rightarrow [0,2)$ is discontinuous at even integers. If $(\gamma,V)$ is a framed loop with $\mathrm{Tw}(\gamma,V)=2k$ for some integer $k$, then $e^{i \alpha(\gamma,V)} \cdot (\gamma,V)$ gives a closed Bishop framing of $\gamma$. 

Let $\mathcal{U} \subset \mathcal{B}$ denote the open subset containing orbits of loops which do not admit a closed Bishop framing and let $\widetilde{\mathcal{U}} \subset \mathcal{M}_{ev}$ denote the open subset containing $[\gamma,V]$ with $[\gamma] \in \mathcal{U}$ and $V$ any framing of even linking number. Then $\mathrm{Tw}_2:\mathcal{U} \rightarrow [0,2)$ is a smooth map and it follows that $[\gamma,V] \mapsto [e^{i\alpha(\gamma,V)}] \cdot [\gamma, V]$ is a smooth map from $\widetilde{\mathcal{U}}$ to itself, by the explicit formula for $\alpha(\gamma,V)$.

\begin{proof}[Proof of Theorem \ref{thm:principal_bundle}]
The obvious projection $\M_{ev} \rightarrow \mathcal{B}$ is the \emph{forget framing map} $[\gamma,V] \rightarrow [\gamma]$. The fibers of this projection are diffeomorphic to $\ls$, as $\ls$ acts transitively and freely on the even-linking-number relative framings of a fixed base curve $[\gamma]$. 

It remains to show that $\M_{ev}$ is locally diffeomorphic to $\mathcal{B} \times \ls$. This will be accomplished by constructing smooth sections on an open cover of $\mathcal{B}$. The map $\mathcal{U} \rightarrow \widetilde{\mathcal{U}}$ given by $[\gamma] \mapsto [\gamma,V(\gamma)]$ is a section, since $V(\gamma)$ is uniquely determined up to constant frame twists. Moreover, the section is smooth by the above discussion.

We wish to mimic this construction on another open subset of $\mathcal{B}$. Let $\mathcal{U}' \subset \mathcal{B}$ be the open subset which excludes orbits of loops $\gamma$ such that the Bishop framing $V$ of $\gamma$ satisfies $V(2)=-V(0)$ and let $\widetilde{\mathcal{U}}' \subset \mathcal{M}_{ev}$ denote its preimage with respect to the forget framing map. Consider the \emph{moduli space of anti-framed loops} $\mathcal{M}_{anti}$ consisting of $(\mathrm{Sim}_0 \times \bbS^1)$-orbits of framed paths $(\gamma,V)$ such that $\gamma$ is a smooth loop and $V$ satisfies $V(2)=-V(0)$. This space is diffeomorphic to $\M$ via the map $\M_{anti} \rightarrow \M$ induced by taking $(\gamma,V)$ to $e^{i\pi t/4} \cdot (\gamma,V)$. Let $\widetilde{\mathcal{U}}'_{anti} \subset \M_{anti}$ denote the preimage of $\widetilde{\mathcal{U}}' \subset \M_{ev}$ under this diffeomorphism ($[\gamma,V] \in \widetilde{\mathcal{U}}'_{anti}$ is said to have an \emph{even anti-framing}).  By arguments similar to those above, we can construct a smooth section $\mathcal{U}' \rightarrow \widetilde{\mathcal{U}}'_{anti}$ which assigns to each loop its even anti-framing of minimal possible positive constant twist rate. Composing with the diffeomorphism $\M_{anti} \rightarrow \M$ yields a smooth section $\mathcal{U}' \rightarrow \widetilde{\mathcal{U}}'$. Since $\mathcal{U} \cup \mathcal{U}'$ forms an open cover of $\mathcal{B}$, this completes the proof.
\end{proof}

This theorem has a corollary which follows trivially but is of practical interest. A well-studied problem in applied differential geometry is to algorithmically assign a framing or relative framing to a given parameterized space curve \cite{carroll, dennis, hanson2, wang}. This has applications to computer graphics, where one uses the framing to construct a tube around a given curve for visual clarity \cite{hanson2}, as well as animation, motion planning and camera tracking \cite{wang}. Desirable properties of a curve framing algorithm include: the algorithm should be invariant under ambient Euclidean similarities (it should depend only on the geometry of the curve), the framing should close if the curve is a loop (necessary in computer graphics if the tube is to be textured \cite{hanson2})  and the framing should vary continuously as the curve varies (necessary for animation applications). We therefore define a \emph{curve framing algorithm} to be a continuous section from a subset of $\mathcal{B}$ to $\mathcal{M}$. 

Examples of curve framing algorithms include: the relative framing induced by the Frenet framing (defined on the set of loops with nonvanishing curvature), the Bishop framing (defined on the set of loops with integral writhe), and the Writhe framing (defined on the set of embedded curves). Each of these algorithms fails on some subset of curves. The following corollary says that this must be the case for any curve framing algorithm.

\begin{cor}\label{cor:framing_algorithm}
There is no curve framing algorithm defined on all of $\mathcal{B}$.
\end{cor}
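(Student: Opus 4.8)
The plan is to argue by connectivity, reducing the statement to the impossibility of sectioning a nontrivial covering space. A curve framing algorithm defined on all of $\mathcal{B}$ is by definition a continuous global section $s \colon \mathcal{B} \to \mathcal{M}$ of the forget-framing map. The space $\mathcal{B} = \mathrm{Imm}(S^1,\R^3)/\mathrm{Sim}$ is connected, since by the Hirsch--Smale theorem all immersions $S^1 \to \R^3$ are regularly homotopic (the classifying obstruction lives in $[S^1,\bbS^2]$, which is trivial because $\bbS^2$ is simply connected). Consequently the image $s(\mathcal{B})$ is connected and lands in a single component of $\mathcal{M}$; after possibly composing with the diffeomorphism $\mathcal{M}_{ev} \approx \mathcal{M}_{od}$, I may assume that $s \colon \mathcal{B} \to \mathcal{M}_{ev}$ is a section of the bundle of Theorem \ref{thm:principal_bundle}.

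Next I would analyze the topology of the fiber. The based loop group $\ls = \mathcal{L}\bbS^1/\bbS^1$ is disconnected: its components are indexed by the winding number of a loop $S^1 \to \bbS^1$, so $\pi_0(\ls) \cong \Z$, and each individual component is contractible (once the winding number is fixed, a loop is determined by a real-valued periodic function, up to an integer translation). Using this together with the local triviality supplied by Theorem \ref{thm:principal_bundle}, I would factor the bundle projection as $\mathcal{M}_{ev} \xrightarrow{q} \widehat{\mathcal{B}} \xrightarrow{\pi} \mathcal{B}$, where $\widehat{\mathcal{B}} \to \mathcal{B}$ is the honest covering space obtained by collapsing each contractible fiber component to a point. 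This $\pi$ is a $\Z$-cover, and the map $q$ has connected contractible fibers, so $q$ induces a bijection on path components.

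The crux is then the following. Since the paper has already shown that $\mathcal{M}_{ev}$ is a single path component, $q$ forces $\widehat{\mathcal{B}}$ to be connected as well; being a $\Z$-fold cover of the connected space $\mathcal{B}$ with infinitely many sheets, it is a connected \emph{nontrivial} covering. A section $s$ of $\mathcal{M}_{ev} \to \mathcal{B}$ would yield a section $q \circ s$ of $\pi$, but a connected covering of degree greater than one admits no continuous section: the image of any section is at once open, closed, and proper, contradicting the connectedness of $\widehat{\mathcal{B}}$. This contradiction finishes the argument. Geometrically, the nontrivial monodromy of this $\Z$-cover around loops in $\mathcal{B}$ is precisely the phenomenon recorded by the total twist---transporting a framing around a loop whose base curve passes through a self-crossing changes its winding---which is why the mod-$2$ invariant $\mathrm{Tw}_2 \colon \mathcal{B} \to \R/2\Z$ cannot be lifted to a continuous $\R$-valued function and why $\mathcal{M}_{ev}$ is connected to begin with.

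The step I expect to demand the most care is the reduction to an honest covering space in the Fr\'{e}chet setting: checking that collapsing the contractible fiber components yields a well-defined covering $\widehat{\mathcal{B}} \to \mathcal{B}$ compatible with the local trivializations of Theorem \ref{thm:principal_bundle}, and that connectedness of $\mathcal{M}_{ev}$ genuinely transfers to $\widehat{\mathcal{B}}$ through $q$. Once this topological bookkeeping is in place, the nonexistence of the section is formal, which is exactly why the corollary follows so readily from the principal bundle structure.
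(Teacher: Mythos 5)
Your argument is correct and rests on exactly the same three ingredients as the paper's proof: the principal bundle structure of Theorem \ref{thm:principal_bundle}, the fact that $\pi_0(\ls)\cong\Z$, and the path-connectedness of $\mathcal{M}_{ev}$. The paper reaches the contradiction more directly---a global section of a principal bundle trivializes it, so $\mathcal{M}_{ev}\approx\mathcal{B}\times(\ls)$ would have infinitely many path components---so your detour through the associated $\Z$-cover $\widehat{\mathcal{B}}$ and the Hirsch--Smale connectedness of $\mathcal{B}$ is sound but not needed.
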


\begin{proof}
We have shown that each component of $\mathcal{M}$ is a principal bundle over $\mathcal{B}$. A continuous global section would imply that one of the components of $\mathcal{M}$ is homeomorphic to $\mathcal{B} \times \ls$, which is not the case since $\ls$ has infinitely many path-components.
\end{proof}

%%%%%%%%%%%%%%%%%%%%%%%%%%%%%%%%%%%%%%%%%%%%%%%%%%%%%%%%%%%%%%%%%%%
\subsection{Reduction onto $\mathrm{IsoImm}(S^1,\R^3)$}
%%%%%%%%%%%%%%%%%%%%%%%%%%%%%%%%%%%%%%%%%%%%%%%%%%%%%%%%%%%%%%%%%%%

Let $\textbf{1} \in \mathcal{L}\R$ denote the constant loop in $\R$. The next step in the process of showing that $\mathrm{IsoImm}(S^1,\R^3)$ is the symplectic reduction of each component of $\M$ is to show that the level set $\mu^{-1}([\textbf{1}])$ is a manifold, where $\mu=\mu_{\ls}$ continues to denote the moment map of the $\ls$-action. We will use the notation
$$
\cl_1 := \{(\gamma,V) \in \cl \mid \|\gamma'(t)\|=1 \; \forall \; t\} \; \mbox{ and } \; \M_1:= \cl_1/(\mathrm{Sim}_0 \times \bbS^1).
$$
It follows from Lemma \ref{lem:frame_hopf_map} that $\widehat{\mathrm{H}}\left(\mu^{-1}([\textbf{1}])\right) = \mathcal{M}_1$, so it suffices to show that $\M_1 \subset \M$ is a submanifold.

\begin{prop}\label{prop:arclength_parameterized_submanifolds}
The spaces $\cl_1 \subset \cl$ and $\M_1 \subset \M$ are submanifolds.
\end{prop}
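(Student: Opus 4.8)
The plan is to prove both statements by exhibiting $\cl_1$ and $\M_1$ as regular level sets of suitable smooth maps and invoking Hamilton's Implicit Function Theorem for tame Fr\'echet manifolds \cite[Section III, Theorem 2.3.1]{hamilton}. I would first handle $\cl_1 \subset \cl$. The natural defining map is the \emph{speed map}
$$
\Sigma : \cl \longrightarrow \mathcal{L}\R, \qquad (\gamma,V) \longmapsto \|\gamma'\|,
$$
whose image records the pointwise speed of the base curve. Writing $r = \|\gamma'\|$ and recalling that in the frame-map coordinates $\op_{fp} \approx \LSO \times \mathcal{L}\R^+$ (with the $\mathcal{L}\R^+$ factor being exactly the speed $r$), one sees that $\cl_1 = \Sigma^{-1}(\mathbf{1})$, where $\mathbf{1} \in \mathcal{L}\R$ is the constant loop at $1$. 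The key computation is the derivative: for a tangent variation $(\delta\gamma,\delta V) \in T_{(\gamma,V)}\cl$ one has $D\Sigma(\delta\gamma,\delta V) = \langle \tfrac{d}{dt}\delta\gamma, T\rangle$ with $T = \gamma'/\|\gamma'\|$. I would check that this map is surjective onto the tame Fr\'echet space $\mathcal{L}\R$ and admits a smooth tame family of right inverses, so that $\mathbf{1}$ is a regular value. The right inverse is constructed by taking a prescribed function $h \in \mathcal{L}\R$ and building the variation that stretches the tangent in the $T$-direction by $h$ (the basic stretch variation $X_{st}$ rescaled by $h/\|\gamma'\|$ from Section \ref{sec:induced_geometric_structures} is exactly of this form), which manifestly hits any target and depends tamely on the data. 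This is the part of the argument whose analytic details the authors alluded to as being parallel to the omitted proof of Proposition \ref{prop:tilde_C_manifold}.

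For the quotient statement $\M_1 \subset \M$, I would exploit the fact, established in Section \ref{sec:induced_geometric_structures} and in the discussion preceding this proposition, that $\widehat{\mathrm{H}}(\mu^{-1}([\mathbf{1}])) = \M_1$, so that $\M_1$ is, up to the identifications of Theorem \ref{thm:grassmannian}, the image of a level set of the momentum map $\mu = \mu_{\ls}$. The cleanest route is to transfer everything to the Grassmannian model: under $\widehat{\mathrm{H}}$, the condition $\|\gamma'\| \equiv 1$ becomes $|\phi|^2 + |\psi|^2 \equiv 1$ pointwise, i.e.\ $\mu([\Phi]) = [\mathbf{1}]$. Since the arclength condition is invariant under the full group $\mathrm{Sim}_0 \times \bbS^1$ by which we quotient to form $\M$, the level set descends cleanly, and the submanifold property of $\cl_1$ combined with transversality of the group action to the level set yields that $\M_1 = \cl_1/(\mathrm{Sim}_0 \times \bbS^1)$ is a submanifold of $\M$.

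The main obstacle I anticipate is verifying the hypotheses of Hamilton's Implicit Function Theorem in the tame Fr\'echet category rather than merely checking pointwise surjectivity of the derivative: one must produce a \emph{family} of right inverses to $D\Sigma$ that is smooth and tame in $(\gamma,V)$, and confirm that all spaces and maps in sight are tame. The explicit right inverse built from the stretch variation should make this manageable — it depends on $(\gamma,V)$ only through division by $\|\gamma'\|$, which is smooth and tame on $\cl$ since $\gamma$ is an immersion — but careful bookkeeping of the tame estimates is where the real work lies. A secondary point requiring care is confirming that the $(\mathrm{Sim}_0 \times \bbS^1)$-orbits meet the level set transversally so that the quotient inherits a genuine submanifold structure; this follows from the $\mathrm{Sim}_0 \times \bbS^1$-invariance of the arclength condition, but should be stated explicitly.
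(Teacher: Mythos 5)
Your strategy is genuinely different from the paper's, and as written it has two concrete gaps. First, the proposed right inverse to $D\Sigma$ does not land in $T_{(\gamma,V)}\cl$: the rescaled stretch variation with $\delta\gamma' = hT$ produces $\delta\gamma(t) = \int_0^t hT\,\mathrm{d}\tilde t$, and there is no reason for $\int_{S^1} hT\,\mathrm{d}t$ to vanish, so $\delta\gamma$ generally fails the closure constraint that defines $T_{(\gamma,V)}\cl$ inside $T_{(\gamma,V)}\op_{fp}$. You would have to correct it by adding bending components $\lambda_{b_1}X_{b_1}+\lambda_{b_2}X_{b_2}$ (which do not perturb $\left<\delta\gamma',T\right>$) chosen to restore closure -- and showing such a correction exists is essentially the bump-function surjectivity argument, so it cannot be skipped. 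Second, and more seriously, the theorem you cite, \cite[Section III, Theorem 2.3.1]{hamilton}, applies to maps from a tame Fr\'{e}chet manifold into a \emph{finite-dimensional} vector space; your speed map $\Sigma:\cl\rightarrow\mathcal{L}\R$ has infinite-dimensional target, so you would instead need the full Nash--Moser implicit function theorem, including a smooth tame family of right inverses and a local product decomposition of the level set. You correctly flag this as "where the real work lies," but it is precisely the work that is not done, and it is avoidable.

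The paper's route inverts the order of the two conditions so that the hard one becomes free. Under the frame map, the unit-speed condition is exactly the condition that $\mathrm{F}(\gamma,V)$ lies in $\mathcal{L}(\mathrm{SO}(3)\times\{\textbf{1}\})\subset\LSO$, and the loop space of a finite-dimensional submanifold is automatically a submanifold of the loop space -- no implicit function theorem needed for the infinite-codimension condition. What remains is only the closure condition $\int_{S^1}U\,\mathrm{d}t=\vec{0}$, a map into $\R^3$, where the finite-dimensional-target version of Hamilton's theorem applies and regularity is checked with bump functions. I would recommend restructuring your proof along these lines: cut out unit speed first (for free, via the frame map), then closure (codimension 3), rather than cutting out closure first and then trying to impose unit speed by an infinite-dimensional level-set argument. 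Your treatment of $\M_1$ via $(\mathrm{Sim}_0\times\bbS^1)$-invariance is consistent with the paper's.
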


\begin{proof}
The embedding $\mathrm{SO}(3) \times \{\textbf{1}\} \hookrightarrow \mathrm{Sim}_0=\mathrm{SO}(3) \times \R^+$ induces an embedding of $\mathcal{L}(\mathrm{SO}(3) \times \{\textbf{1}\})$ into $\LSO$ as a submanifold, so it suffices to show that the image of $\cl_1$ under the frame map is a submanifold of $\mathcal{L}(\mathrm{SO}(3) \times \{\textbf{1}\})$. The image of $\cl_1$ is the set
$$
\left\{\left((U,V,W),\textbf{1}\right) \mid \int_{S^1} U \; \mathrm{d}t = \vec{0} \right\} \subset \mathcal{L}(\mathrm{SO}(3) \times \{\textbf{1}\}).
$$
That this set is a submanifold follows by applying \cite[Section~III, Theorem~2.3.1]{hamilton}, which is an extension of the implicit function theorem to maps from a tame Fr\'{e}chet space to a finite-dimensional vector space. To apply the theorem, we need to show that $\vec{0}$ is a regular value of the map $f:\mathcal{L}(\mathrm{SO}(3) \times \{\textbf{1}\}) \rightarrow \R^3$ defined by
$$
f:((U,V,W),\textbf{1}) \mapsto \int_{S^1}  U \; \mathrm{d}t.
$$
The tangent spaces to  $\mathcal{L}(\mathrm{SO}(3) \times \{\textbf{1}\})$ are isomorphic to $\mathcal{L}(\mathfrak{so}(3) \times \{\textbf{0}\})$, where $\textbf{0}$ denotes the constant zero map. We can therefore express a tangent vector at $\left((U,V,W),\textbf{1}\right) \in \mathcal{L}(\mathrm{SO}(3) \times \{\textbf{1}\})$ as a variation $\left((\delta U, \delta V, \delta W),\textbf{0}\right)$, with $\delta U = \xi^1 V - \xi^2 W$ for some $\xi^1, \xi^2 \in \mathcal{L}\R$. Then the derivative of $f$ is given by
$$
Df((U,V,W),r)((\delta U, \delta V, \delta W),0) = \int_{S^1} \delta U \; \mathrm{d}t = \int_{S^1} \xi^1 V - \xi^2 W \; \mathrm{d}t.
$$

To show that $\cl_1$ is a submanifold, we wish to show that
\begin{equation}\label{eqn:full_span}
\mathrm{span}\left\{\int_{S^1} \xi^1 V - \xi^2 W \; \mathrm{d}t \mid \xi^1, \xi^2 \in \mathcal{L}\R \right\}=\R^3.
\end{equation}
Toward this goal, we claim that there exists $t_0 \in S^1$ such that 
$$
\mathrm{span}\{V(0),W(0),V(t_0),W(t_0)\} = \R^3.
$$
Indeed, the span of $V(0)$ and $W(0)$ is already 2-dimensional and if $V(t),W(t) \in \mathrm{span}\{V(0),W(0)\}$ for all $t$ then $U(t)$ must be constant, and this contradicts the assumption that $f((U,V,W),\textbf{1})=\vec{0}$. Assuming without loss of generality that $V(t_0)$ is linearly independent of $V(0)$ and $W(0)$, we choose bump functions $\xi^1_1$ and $\xi^2_2$ around $0$ and $\xi^1_3$ around $t_0$ with sufficiently small support so that
$$
\int_{S^1}  \xi^1_1 V \; \mathrm{d}t, \; \int_{S^1}  \xi^2_2 W \; \mathrm{d}t, \; \mbox{ and } \; \int_{S^1} \xi^1_3 V \; \mathrm{d}t
$$
are linearly independent. These vectors belong to the left hand side of (\ref{eqn:full_span}) and this shows that $\cl_1$ is a submanifold of $\cl$. Since this construction is $\mathrm{Sim}_0 \times \bbS^1$-invariant, the same approach can be used to show that $\M_1$ is a submanifold.
\end{proof}

We now wish to characterize the horizontal and vertical tangent directions with respect to the $\ls$-action on $\M$. It will be useful to work in complex coordinates. For simplicity, we focus on $\mathrm{Gr}_2(\mathcal{L}\C)$---all of the statements can be translated to $\mathrm{Gr}_2(\mathcal{A}\C)$. Recall we have used the notation for $T^{hor}_\Phi \mathrm{St}_2(\mathcal{L}\C)$ for the $\mathrm{U}(2)$-horizontal subspace and identified $T_{[\Phi]} \mathrm{Gr}_2(\mathcal{L}\C) \approx T_{[\Phi]}^{hor} \mathrm{St}_2(\mathcal{L}\C)$. We use $\mathrm{proj}$ to denote orthogonal projection $T_\Phi \mathrm{St}_2(\mathcal{L}\C) \rightarrow T^{hor}_\Phi \mathrm{St}_2(\mathcal{L}\C)$.

\begin{lem}\label{lem:splitting}
The tangent space to $\mu^{-1}([\textbf{1}])$ splits orthogonally as
$$
T_{[\Phi]} \mu^{-1}([\textbf{1}]) = T_{[\Phi]}^{vert} \mu^{-1}([\textbf{1}]) \oplus T_{[\Phi]}^{hor} \mu^{-1}([\textbf{1}]),
$$
where $T_{[\Phi]}^{vert} \mu^{-1}([\textbf{1}])$ is the $\ls$-vertical tangent space and
$$
T_{[\Phi]}^{hor} \mu^{-1}([\textbf{1}]) = \{ \delta \Phi \in T^{hor}_\Phi \mathrm{St}_2(\mathcal{L}\C) \mid \left<\Phi, \delta \Phi\right>_{\C^2} = 0\}
$$
is the complex subspace of $\ls$-horizontal tangents.
\end{lem}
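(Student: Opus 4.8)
The plan is to work entirely in complex coordinates on the Stiefel manifold, representing tangent vectors to $\lGr$ by their horizontal lifts in $T^{hor}_\Phi\mathrm{St}_2(\mathcal{L}\C)$, and to extract the whole statement from a single computation of $D\mu$ together with the elementary pointwise algebra of the Hermitian form $\langle\cdot,\cdot\rangle_{\C^2}$. First I would compute the derivative of $\mu$: for a horizontal $\delta\Phi=(\delta\phi,\delta\psi)$ one gets $D\mu([\Phi])(\delta\Phi)=[\,2\,\mathrm{Re}\langle\Phi,\delta\Phi\rangle_{\C^2}\,]$ in $\mathcal{L}\R/\R$, which matches the Hamiltonian computation of Section~\ref{sec:hamiltonian_structure}. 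The key observation is that since $\delta\Phi$ is $\mathrm{U}(2)$-horizontal, $\int_0^2 \mathrm{Re}\langle\Phi,\delta\Phi\rangle_{\C^2}\,\mathrm{d}t=\mathrm{Re}(\langle\phi,\delta\phi\rangle_{L^2}+\langle\psi,\delta\psi\rangle_{L^2})=0$; hence the vanishing of $D\mu$ in $\mathcal{L}\R/\R$ (which on its face only says $\mathrm{Re}\langle\Phi,\delta\Phi\rangle_{\C^2}$ is \emph{constant}) upgrades to the pointwise identity $\mathrm{Re}\langle\Phi,\delta\Phi\rangle_{\C^2}\equiv 0$. This yields the clean description
$$T_{[\Phi]}\mu^{-1}([\textbf{1}])=\{\delta\Phi\in T^{hor}_\Phi\mathrm{St}_2(\mathcal{L}\C)\mid \mathrm{Re}\langle\Phi,\delta\Phi\rangle_{\C^2}\equiv 0\}.$$

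Next I would place both summands inside this tangent space. The proposed horizontal space $H:=\{\delta\Phi\mid \langle\Phi,\delta\Phi\rangle_{\C^2}\equiv 0\}$ obviously sits inside it (the real part vanishes a fortiori), and it is a complex subspace because $\langle\Phi,i\delta\Phi\rangle_{\C^2}=-i\langle\Phi,\delta\Phi\rangle_{\C^2}$ while $T^{hor}_\Phi\mathrm{St}_2(\mathcal{L}\C)$ is itself $i$-invariant. The vertical space $\mathcal{V}=\{\mathrm{proj}(i\alpha\Phi):\alpha\in\mathcal{L}\R\}$ lies in the level set's tangent space because $\mu$ is invariant under the abelian $\ls$-action, so orbits lie in level sets. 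For orthogonality $H\perp\mathcal{V}$, I would use that a vector of $H$ is already $\mathrm{U}(2)$-horizontal, so $\mathrm{proj}$ may be dropped: $g^{L^2}(\delta\Phi,\mathrm{proj}(i\alpha\Phi))=g^{L^2}(\delta\Phi,i\alpha\Phi)=-\int_0^2\alpha\,\mathrm{Im}\langle\Phi,\delta\Phi\rangle_{\C^2}\,\mathrm{d}t$, which vanishes since $\langle\Phi,\delta\Phi\rangle_{\C^2}\equiv 0$. Running this identity backwards proves $\mathcal{V}^\perp\cap T_{[\Phi]}\mu^{-1}([\textbf{1}])=H$: if $\delta\Phi$ is in the level tangent space and $L^2$-orthogonal to every $\mathrm{proj}(i\alpha\Phi)$, then $\int_0^2\alpha\,\mathrm{Im}\langle\Phi,\delta\Phi\rangle_{\C^2}\,\mathrm{d}t=0$ for all $\alpha\in\mathcal{L}\R$, forcing $\mathrm{Im}\langle\Phi,\delta\Phi\rangle_{\C^2}\equiv 0$ and hence $\delta\Phi\in H$.

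Finally I would prove exhaustiveness, that is $T_{[\Phi]}\mu^{-1}([\textbf{1}])=\mathcal{V}\oplus H$. Given $\delta\Phi$ in the level tangent space, with $q:=\mathrm{Im}\langle\Phi,\delta\Phi\rangle_{\C^2}$ (automatically of zero mean, again by horizontality), it suffices to find $\alpha$ with $\mathrm{Im}\langle\Phi,\mathrm{proj}(i\alpha\Phi)\rangle_{\C^2}=q$, for then $\delta\Phi-\mathrm{proj}(i\alpha\Phi)$ lies in $H$. Writing $\mathrm{proj}(i\alpha\Phi)=i\alpha\Phi-\sum_j c_j(\alpha)\,\Phi\xi_j$ for a basis $\xi_j$ of $\mathfrak{u}(2)$, and using $\mathrm{Im}\langle\Phi,i\alpha\Phi\rangle_{\C^2}=-\alpha(|\phi|^2+|\psi|^2)=-\alpha$ on the level set, the assignment $\alpha\mapsto\mathrm{Im}\langle\Phi,\mathrm{proj}(i\alpha\Phi)\rangle_{\C^2}$ is $-\alpha$ plus a finite-rank term. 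Solving the equation therefore reduces, after applying the four coefficient functionals, to a finite linear system, which is solvable precisely when the operator is injective; injectivity follows from $H\cap\mathcal{V}=0$ (a consequence of the orthogonality above) together with freeness of the $\ls$-action. I expect this last step---upgrading the orthogonal characterization to a genuine direct-sum decomposition in the weak $L^2$/Fr\'{e}chet setting---to be the main obstacle; the saving grace is that the $\mathrm{U}(2)$-projection perturbs the identity only by a finite-rank operator, so no completeness or elliptic-estimate input is needed and the decomposition can be written down explicitly.
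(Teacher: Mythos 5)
Your proposal is correct and follows essentially the same route as the paper: compute $D\mu$ to identify $T_{[\Phi]}\mu^{-1}([\textbf{1}])$ with the $\mathrm{U}(2)$-horizontal vectors satisfying $\mathrm{Re}\left<\Phi,\delta\Phi\right>_{\C^2}\equiv 0$, describe the $\ls$-vertical space as $\mathrm{proj}(i\alpha\Phi)$, and show its orthogonal complement in the level set is cut out by $\mathrm{Im}\left<\Phi,\delta\Phi\right>_{\C^2}\equiv 0$. In fact you are more careful than the paper at two points it glosses over --- the zero-mean argument upgrading ``constant'' to ``identically zero,'' and the surjectivity of the vertical projection, which the paper disposes of with a bare explicit formula while you correctly reduce it to a finite-rank perturbation of the identity solvable by the injectivity coming from $H\cap\mathcal{V}=0$ and freeness.
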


\begin{proof}
Lift $[\Phi] \in \mu^{-1}([\textbf{1}])$ to $\Phi \in \mathrm{St}_2(\mathcal{L}\C)$ and let $\delta \Phi \in T^{hor}_\Phi \mathrm{St}_2(\mathcal{L}\C)$. Then 
$$
D\mu([\Phi])(\delta \Phi) = \left.\frac{d}{d\epsilon}\right|_{\epsilon=0} |\phi + \epsilon \delta \phi |^2 + |\psi + \epsilon \delta \psi |^2 = 2 \mathrm{Re}(\phi \overline{\delta \phi} + \psi \overline{\delta \psi}) = \mathrm{Re}\left<\Phi,\delta \Phi\right>_{\C^2}
$$
and 
$$
T_{[\Phi]} \mu^{-1}([\textbf{1}]) = \mathrm{ker} D\mu([\Phi])(\delta \Phi) \approx \{\delta \Phi \in T^{hor}_\Phi \mathrm{St}_2(\mathcal{L}\C) \mid \mathrm{Re} \left<\Phi, \delta \Phi \right>_{\C^2} = 0\}.
$$
The $\ls$-vertical directions of $\mathrm{Gr}_2(\mathcal{L}\C)$ were already described in Section \ref{sec:hamiltonian_structure}, whence we conclude
$$
T_{[\Phi]}^{vert} \mu^{-1}([\textbf{1}]) = \{ \mathrm{proj}(i\alpha \Phi) \mid \alpha \in \mathcal{L}\R\},
$$
where $\mathrm{proj}:T_\Phi \lSt \rightarrow T_\Phi^{hor} \lSt$ denotes orthogonal projection. The $\ls$-horizontal tangent space to $\mu^{-1}([\textbf{1}])$ is given by 
$$
\{\delta \Phi \in T^{hor}_{\Phi} \lSt \mid \mathrm{Re} \left<\Phi, \delta \Phi \right>_{\C^2} = 0 \mbox{ and } \mathrm{Re}\left<i\alpha \Phi , \delta \Phi \right>_{L^2} = 0 \mbox{ for all } \alpha \in \mathcal{L}\R \}.
$$
The second defining condition can be rewritten as
$$
\mathrm{Re}\left<i\alpha \Phi, \delta \Phi\right>_{L^2} =-\mathrm{Im}\left<\alpha \Phi, \delta \Phi \right>_{L^2} = -\int_{S^1} \alpha \mathrm{Im}\left<\Phi,\delta \Phi \right>_{\C^2} \; \mathrm{d}t = 0
$$
for all $\alpha \in \mathcal{L}\R$, and we deduce that $\mathrm{Im}\left<\Phi,\delta\Phi\right>_{\C^2} = 0$.

From these characterizations, it is easy to see that the intersection of the horizontal and vertical spaces is zero. To see that the tangent space splits orthogonally, we note that the orthogonal projection operator $T_{[\Phi]} \mu^{-1}([\textbf{1}]) \rightarrow T_{[\Phi]}^{vert} \mu^{-1}([\textbf{1}])$  is given explicitly by $\mathrm{proj}\left(\left< \Phi, \delta \Phi\right>_{\C^2} \Phi\right)$.
\end{proof}

We now arrive at the main result of this section. 

\begin{thm}\label{thm:millson_zombro_space}
The moduli space of isometric immersions $\mathrm{IsoImm}(S^1,\R^3)$ is obtained as a symplectic reduction of either component of $\M$ by the action of $\ls$. The induced symplectic structure agrees with the Millson-Zombro symplectic form up to a constant.
\end{thm}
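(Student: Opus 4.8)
The plan is to apply Marsden--Weinstein reduction to the Hamiltonian $\ls$-action established in Section \ref{sec:hamiltonian_structure} and then to recognize the result, via the frame-Hopf map, as the Millson--Zombro reduction of $\mathcal{L}\bbS^2$ by $\mathrm{SO}(3)$. Proposition \ref{prop:arclength_parameterized_submanifolds} shows that $\widehat{\mathrm{H}}(\mu^{-1}([\textbf{1}])) = \M_1$ is a submanifold, and the $\ls$-action on $\M$ is free, so the reduced space $\mu^{-1}([\textbf{1}])/\ls$ is a tame Fr\'echet symplectic manifold whose form $\omega_{red}$ is characterized by $\pi^\ast \omega_{red} = \iota^\ast \omega^{L^2}$, where $\iota$ is the inclusion of the level set and $\pi$ the quotient projection. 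The orthogonal splitting of Lemma \ref{lem:splitting} is what makes $\omega_{red}$ well defined and nondegenerate: the $\ls$-horizontal subspace is a symplectic complement to the vertical directions, and the restriction of $\omega^{L^2}$ to it is precisely the form to be computed.

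First I would identify $\mu^{-1}([\textbf{1}])/\ls$ with $\mathrm{IsoImm}(S^1,\R^3)$ at the level of manifolds. Restricting the principal bundle of Theorem \ref{thm:principal_bundle} to $\M_1$, the forget-framing map exhibits $\M_1$ as an $\ls$-bundle over arclength-parameterized based loops modulo $\mathrm{SO}(3)$; since the arclength condition fixes the length, the residual $\R^+$ acts trivially and the base is exactly $\mathrm{IsoImm}(S^1,\R^3)$. To prepare the comparison of forms, I would lift to $\St$, where the level set consists of loops $\Phi=(\phi,\psi)$ into $\bbS^3 \subset \C^2$ with $L^2$-orthonormal components. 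The crucial observation is that these orthonormality conditions are exactly the Millson--Zombro moment condition: writing $T = \mathrm{Hopf}_1(\Phi)$, the identity $\int_0^2 T\,\mathrm{d}t = \left(\int |\phi|^2 - |\psi|^2,\, \int 2\mathrm{Im}(\phi\overline{\psi}),\, \int 2\mathrm{Re}(\phi\overline{\psi})\right)$ shows that $\langle\phi,\psi\rangle_{L^2}=0$ together with $\|\phi\|_{L^2}=\|\psi\|_{L^2}$ is equivalent to $\int_{S^1} T\,\mathrm{d}t = \vec{0}$. Quotienting by the pointwise $\mathcal{L}\bbS^1$-action, whose orbits are the fiberwise Hopf circles, sends $\Phi$ to $T = \mathrm{Hopf}_1(\Phi) \in \mathcal{L}\bbS^2$ and thereby identifies the level set with the zero level set of the Millson--Zombro $\mathrm{SO}(3)$-moment map; the residual $\mathrm{SU}(2)\approx\mathrm{SO}(3)$ quotient (Lemma \ref{lem:su2_action}) then reproduces $\mathcal{L}\bbS^2 \sslash \mathrm{SO}(3)$.

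To match the symplectic forms, I would compute $\omega_{red}$ on $\ls$-horizontal tangents. By Lemma \ref{lem:splitting} these satisfy $\langle \Phi, \delta\Phi\rangle_{\C^2}=0$ pointwise, i.e. they are horizontal for the Hopf fibration $\bbS^3 \to \bbS^2$, and $\omega_{red}(\delta\Phi_1,\delta\Phi_2) = -\mathrm{Im}\langle\delta\Phi_1,\delta\Phi_2\rangle_{L^2}$. A pointwise computation with the frame-Hopf map should show that on the horizontal distribution $-\mathrm{Im}\langle\delta\Phi_1,\delta\Phi_2\rangle_{\C^2}$ equals a universal constant times $\langle T, \delta T_1 \times \delta T_2\rangle$, where $\delta T_j = D\mathrm{Hopf}_1(\Phi)\,\delta\Phi_j$. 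Integrating over $S^1$ then identifies $\omega_{red}$ with a constant multiple of the transgressed area form $\int_{S^1}\langle T, \delta T_1 \times \delta T_2\rangle\,\mathrm{d}t$ on $\mathcal{L}\bbS^2$, which descends to the Millson--Zombro form, yielding the agreement up to a constant. The argument for the even (antiperiodic) component is identical, since $\mathrm{Hopf}_1$ is invariant under $\Phi \mapsto -\Phi$ and so carries an antiperiodic $\Phi$ to a genuine loop $T \in \mathcal{L}\bbS^2$.

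The main obstacle is this last step: carrying out the pointwise Hopf-map computation and pinning down the exact constant relating the $\C^2$ symplectic form on the contact distribution of $\bbS^3$ to the area form on $\bbS^2$. The manifold-level identification and the translation of the $L^2$-orthonormality constraints into the closure/moment condition $\int T = \vec{0}$ are clean once the above coincidence is noticed, and the Fr\'echet reduction itself is routine given the submanifold and freeness results already in hand; the genuine content lies in the differential-geometric comparison of the two symplectic forms.
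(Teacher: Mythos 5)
Your proposal is correct, and the first half (identifying $\mu^{-1}([\textbf{1}])/(\ls)$ with $\mathrm{IsoImm}(S^1,\R^3)$ by restricting the principal bundle of Theorem \ref{thm:principal_bundle} to $\M_1$, and invoking Lemma \ref{lem:splitting} to make the reduction well defined) is exactly what the paper does. Where you genuinely diverge is in the comparison of symplectic structures. The paper never passes through $\mathcal{L}\bbS^2$: it stays in framed-curve coordinates, observes that the condition $\left<\Phi,\delta\Phi\right>_{\C^2}=0$ kills the $\lambda_1$ and $\lambda_2$ components in the quaternionic decomposition so that horizontal variations are pure bending variations $\lambda_{b_1}X_{b_1}+\lambda_{b_2}X_{b_2}$, and then reads off the induced complex structure $J^\op\cdot\delta\gamma = T\times\delta\gamma$ and the induced metric $\frac{1}{4}\int\left<\delta\gamma_1',\delta\gamma_2'\right>\mathrm{d}t$ separately, deducing agreement of $\omega$ from $\omega=g(J\cdot,\cdot)$; this yields the exact constant $1/4$ with no further computation. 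You instead make the Gelfand--MacPherson ``reduction in stages'' picture explicit: the level set in $\St$ consists of loops into $\bbS^3$, the $L^2$-orthonormality conditions become the Millson--Zombro moment condition $\int_{S^1}T\,\mathrm{d}t=\vec{0}$ via $\mathrm{Hopf}_1$, and the $\mathcal{L}\bbS^1$-quotient is the fiberwise Hopf fibration onto $\mathcal{L}\bbS^2$. This is a valid and arguably more illuminating route (it realizes, at the level of the proof, the correspondence with \cite{hausmann} that the paper only states as motivation), and the one step you flag as outstanding is not a real obstacle: at each point of $\bbS^3$ the Hopf-horizontal distribution is $2$-real-dimensional, so $-\mathrm{Im}\left<\cdot,\cdot\right>_{\C^2}$ and the pullback of the area form of $\bbS^2$ under $\mathrm{Hopf}_1$ are automatically pointwise proportional, and the proportionality constant is universal because both forms are invariant under the transitive $\mathrm{SU}(2)$-action; integrating over $S^1$ then gives the Millson--Zombro form up to that constant. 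The trade-off is that your argument does not produce the explicit constant, whereas the paper's does, but the theorem as stated only requires agreement up to a constant.
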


\begin{proof}
Going through the proof of Theorem \ref{thm:principal_bundle}, we see that it can be directly adapted to show that each component of $\M_1$ is an $\ls$-bundle over $\mathrm{IsoImm}(S^1,\R^3)$. It follows that each component of $\M_1/(\ls)$ is diffeomorphic to $\mathrm{IsoImm}(S^1,\R^3)$. It follows from Lemma \ref{lem:splitting} that $\M \sslash (\ls) = \M_1/(\ls)$ inherits a well-defined Riemannian metric, symplectic form and almost complex structure and that these structures are compatible. 

To see that the induced structures agree with those of Millson-Zombro, we recall from Section \ref{sec:induced_geometric_structures} that the admissible variations of $q \in \mathcal{P}\mathbb{H} \approx \mathcal{P}\mathbb{C}^2$ can be written in the form
$$
\lambda_1 \frac{q}{2} + \lambda_2 \frac{\textbf{i} q}{2} + \lambda_3 \frac{\textbf{j}q}{2} + \lambda_4 \frac{\textbf{k}q}{2},
$$
where $\lambda_j \in \mathcal{P}\R$. We also saw that the admissible variations of $(\gamma,V) \in \op$ can be written as combinations
$$
\lambda_{st} X_{st} + \lambda_{tw}X_{tw} + \lambda_{b_1} X_{b_1} + \lambda_{b_2} X_{b_2}
$$
of the basic variations and that $D\widehat{\mathrm{H}}$ gives a correspondence
$$
q/2 \mapsto X_{st}, \; \qi q/2  \mapsto X_{tw}, \; \qj q/2 \mapsto X_{b_1}, \; \qk q/2 \mapsto X_{b_2}.
$$
The condition $\left<\Phi,\delta \Phi \right>_{\C^2}=0$ defining $T_{[\Phi]}^{hor} \mu^{-1}([\textbf{1}])$ implies that the $\ls$-horizontal variations of an element of $\mu^{-1}([\textbf{1}])$ must have $\lambda_1=\lambda_2=0$ in the quaternionic notation. This implies that an $\ls$-horizontal variation of an element of $\M_1$ must take the form $\lambda_{b_1} X_{b_1} + \lambda_{b_2} X_{b_2}$; that is, the variation cannot have any stretching or twisting component. This means that the  induced almost complex structure of $\mathcal{M} \sslash (\ls)$ can be succinctly rewritten as
$$
J^\op \cdot \delta \gamma = T \times \delta \gamma.
$$
This almost complex structure  agrees with the Millson-Zombro almost complex structure of $\mathrm{IsoImm}(S^1,\R^3)$. Moreover, the induced Riemannian metric reduces to 
$$
g^\op((\delta \gamma_1, \delta V_1),(\delta \gamma_2, \delta V_2)) = \frac{1}{4} \int_0^2 \left<\frac{d}{ds} \delta \gamma_1, \frac{d}{ds} \delta \gamma_2 \right> \; \mathrm{d}s =  \frac{1}{4} \int_0^2 \left< \delta \gamma_1', \delta \gamma_2' \right> \; \mathrm{d}t
$$
and this agrees with the Millson-Zombro metric up to a constant multiple of $1/4$.  Therefore the symplectic structure agrees up to a constant as well.
\end{proof}

%%%%%%%%%%%%%%%%%%%%%%%%%%%%%%%%%%%%%%%%%%%%%%%%%%%%%%%%%%%%%%%%%%%
\section{The Reparameterization Action}\label{sec:reparameterization_action}
%%%%%%%%%%%%%%%%%%%%%%%%%%%%%%%%%%%%%%%%%%%%%%%%%%%%%%%%%%%%%%%%%%%

%%%%%%%%%%%%%%%%%%%%%%%%%%%%%%%%%%%%%%%%%%%%%%%%%%%%%%%%%%%%%%%%%%%
\subsection{The $\mathrm{Diff}^+(S^1)$-Action}\label{sec:diff_plus_action}
%%%%%%%%%%%%%%%%%%%%%%%%%%%%%%%%%%%%%%%%%%%%%%%%%%%%%%%%%%%%%%%%%%%

As previously mentioned, the group $\mathrm{Diff}^+(S^1)$ of orientation-preserving diffeomorphisms of $S^1$ acts on $\M$ by reparameterizations. This action is important for shape recognition applications, where one wishes to do computations in the \emph{space of unparameterized shapes} $\mathcal{M}/\mathrm{Diff}^+(S^1)$ (see Section \ref{subsubsec:elastic_shape_analysis}). In this section we study the symplectic geometry of the $\mathrm{Diff}^+(S^1)$-action.

One should immediately notice that this action is not well-defined with respect to our usual conventions; i.e., reparameterization does not preserve basepoints, so representing elements of $\M$ as framed loops based at the origin is no longer a sensible option. When dealing with the $\mathrm{Diff}^+(S^1)$-action we will denote elements of $\M$ by $\llbracket \gamma , V \rrbracket$, where $(\gamma, V)$ is a closed framed loop of length 2, not necessarily based at the origin, and 
$$
\llbracket \gamma, V \rrbracket := [\gamma - \gamma(0), V-V(0)].
$$  
Here $[\cdot, \cdot]$ continues to denote the equivalence class of a based framed loop under the actions of $\mathrm{SO}(3)$ by rotation and $\bbS^1$ by constant frame twisting. Then the action of $\rho \in \mathrm{Diff}^+(S^1)$ on $\llbracket \gamma, V \rrbracket \in \M$ is given by
$$
\rho \cdot \llbracket \gamma, V \rrbracket = \llbracket \gamma \circ \rho, V \circ \rho \rrbracket.
$$

The diffeomorphism from $\M$ to $\mathrm{Gr}_2^\circ(\mathcal{L}\C) \sqcup \mathrm{Gr}_2^\circ(\mathcal{A}\C)$ involves taking a derivative, so the issue with basepoint preservation is not relevant in Grassmannian coordinates. An element $\rho \in \Dif$ acts on $[\Phi] \in \Gr$ by the formula
$$
\rho \cdot [\Phi] = [\sqrt{\rho'} \cdot \Phi \circ \rho].
$$
We leave it to the reader to check that $\widehat{H}$ is equivariant with respect to these actions of $\Dif$; that is, if $[\Phi] \in \Gr$ maps to $[\gamma,V]$ then
$$
[\sqrt{\rho'} \cdot \Phi \circ \rho] = \llbracket \gamma \circ \rho, V \circ \rho \rrbracket.
$$

%%%%%%%%%%%%%%%%%%%%%%%%%%%%%%%%%%%%%%%%%%%%%%%%%%%%%%%%%%%%%%%%%%%
\subsection{Hamiltonian Structure}\label{sec:diff_plus_action}
%%%%%%%%%%%%%%%%%%%%%%%%%%%%%%%%%%%%%%%%%%%%%%%%%%%%%%%%%%%%%%%%%%%

Our next goal is to show that the reparameterization action is Hamiltonian. It will be convenient to work in complex coordinates and we restrict our attention to $\lGr$---the same arguments work for the anti-loop Grassmannian. 

The construction of the momentum map for the action of $\Dif$ is similar to the construction for $\mathcal{L}\bbS^1/\bbS^1$ in Section \ref{sec:hamiltonian_structure} so we will skip some details. We begin by determining a formula for the vector field induced by $\xi \in \mathcal{L}\R$.  Let $\rho_\epsilon$ be a path in $\mathrm{Diff}^+(S^1)$ with $\rho_0$ the identity and let $\left.\frac{d}{d\epsilon}\right|_{u=0} \rho_\epsilon = \xi \in \mathcal{L}\R$. Then
$$
\left.\frac{d}{d\epsilon}\right|_{\epsilon=0} \Phi(\rho_\epsilon) \sqrt{\rho'_\epsilon} = \Phi'(\rho_0) \xi \sqrt{\rho'_0} + \Phi (\rho_0) \frac{1}{2}(\rho'_0)^{-1/2} \xi' = \frac{1}{2}\xi' \Phi + \xi \Phi'.
$$
We conclude that the the vector field induced by $\xi \in \mathrm{Diff}^+(S^1)$ on $\lGr$ is represented by
$$
X_{\xi}|_{[\Phi]} = \mathrm{proj}\left(\frac{1}{2}\xi' \Phi + \xi \Phi'\right),
$$
where $\mathrm{proj}$ denotes orthogonal projection onto the horizontal tangent space of $\lSt$ at $\Phi$---the $\Dif$-orbits of $\lSt$ are not, in general, $L^2$-orthogonal to the $\mathrm{U}(2)$-orbits. Note that (as was the case with the vector fields induced by the $\ls$-action on $\lGr$) this representation depends on the choice of lift $\Phi \in \lSt$ of $[\Phi] \in \lGr$.

We endow $\mathcal{L}\R$ (the Lie algebra of $\Dif$) with the $L^2$ metric
$$
\left<\xi_1,\xi_2\right>_{\mathcal{L}\R} := \frac{1}{2} \int_{S^1} \xi_1 \xi_2 \; \mathrm{d}t
$$
in order to embed it into its dual space. Our proposed momentum map is
\begin{align}
\mu_\dif:\M &\rightarrow \mathcal{L}\R \nonumber \\
[\Phi] &\mapsto  \mathrm{Im}(\phi' \overline{\phi} + \psi' \overline{\psi}) \label{eqn:diff_momentum_map}.
\end{align}
One can show that if $\Phi \in \mathcal{P}\C^2$ maps to $(\gamma,V)$ under $\widehat{\mathrm{H}}$, then
$$
\mathrm{tw}(\gamma,V)=\frac{2\mathrm{Im}\left<\Phi,\Phi'\right>_{\C^2}}{\|\Phi\|_{\C^2}^4}.
$$
It follows from this formula that the map $\mu_\dif$ has a natural interpretation in terms of framed curves: if $[\Phi]$ maps to a framed loop $[\gamma,V ]$ under $\widehat{\mathrm{H}}$, then the image of $\mu_\dif$ is $-\frac{1}{2}\mathrm{tw}(\gamma,V)\|\gamma'\|^2$.

Our goal is to show
\begin{equation}\label{eqn:diff_momentum_map_2}
Df_{\xi}([\Phi])(\delta \Phi)= \omega^{L^2}_{[\Phi]}(\delta \Phi, X_{\xi}|_{[\Phi]})
\end{equation}
for $\delta \Phi \in T_\Phi^{hor} \lSt$, where $f_{\xi}([\Phi]):=\left<\mu_\mathrm{Diff}([\Phi]),X_{\xi}|_{[\Phi]}\right>_{\mathcal{L}\R}$ for fixed $\xi \in \mathcal{L}\R$. A calculation similar to the one in Section \ref{sec:hamiltonian_structure} shows that the left hand side of (\ref{eqn:diff_momentum_map_2}) is
\begin{equation}\label{eqn:momentum_map_LHS}
\frac{1}{2}\int_{S^1} \xi \mathrm{Im}(\phi' \overline{\delta \phi} + \delta \phi' \overline{\phi} + \psi' \overline{\delta \psi} + \delta \psi' \overline{\psi}) \; \mathrm{d}t
\end{equation}
and integration by parts shows that the right hand side is
\begin{align*}
-\mathrm{Im} \int_{S^1} \frac{1}{2} \xi (\delta \phi \overline{\phi}' + \delta \psi \overline{\psi}') + \frac{1}{2}\xi'(\delta \phi \overline{\phi} + \delta \psi \overline{\psi}) \; \mathrm{d}t &= \frac{1}{2} \int_{S^1} \xi \mathrm{Im}(\delta \phi ' \overline{\psi} - \delta \psi \overline{\psi}' + \delta \psi' \overline{\psi} - \delta \psi \overline{\psi}') \; \mathrm{d}t\\
&= \frac{1}{2}\int_{S^1} \xi \mathrm{Im}(\phi' \overline{\delta \phi} + \delta \phi' \overline{\phi} + \psi' \overline{\delta \psi} + \delta \psi' \overline{\psi}) \; \mathrm{d}t.
\end{align*}

%%%%%%%%%%%%%%%%%%%%%%%%%%%%%%%%%%%%%%%%%%%%%%%%%%%%%%%%%%%%%%%%%%%
\subsection{The Basepoint Action and Weighted Total Twist}\label{sec:basepoint_action}
%%%%%%%%%%%%%%%%%%%%%%%%%%%%%%%%%%%%%%%%%%%%%%%%%%%%%%%%%%%%%%%%%%%

We define the \emph{weighted total twist functional} $\widetilde{\mathrm{Tw}}$ on $\mathcal{M}$ by 
$$
\widetilde{\mathrm{Tw}}(\llbracket \gamma,V \rrbracket):=\frac{1}{2\pi} \int_{S^1} \mathrm{tw}(\gamma,V) \|\gamma'\| \; \mathrm{d}s.
$$
This functional is similar to the classical total twist $\mathrm{Tw}$ but has an added weight of $\|\gamma'\|$ in the integrand. The weighting makes $\widetilde{\mathrm{Tw}}$ a more interesting functional on the space of \emph{parameterized} framed loops $\M$ than the unweighted $\mathrm{Tw}$ functional. In this section we characterize its critical points.

Consider the action of the subgroup $S^1 \subset \Dif$ consisting of pure rotations. We will refer to this $S^1$-action as the \emph{basepoint action}, as it can be interpreted as changing the basepoint of a based framed loop. The next lemma shows that the restricted action of $S^1$ on $\M$ is Hamiltonian with momentum map a constant multiple of $\widetilde{\mathrm{Tw}}$. The proof follows by a calculation similar to  the one in the previous section.

\begin{lem}\label{lem:basepoint_moment_map}
The basepoint action of $S^1$ on $\Gr$ is Hamiltonian with momentum map
\begin{align*}
\mu_{S^1}:\Gr &\rightarrow \R \approx \R^\ast \\
[\Phi] &\mapsto \int_{S^1} \mathrm{Im}(\phi' \overline{\phi} + \psi' \overline{\psi}) \; \mathrm{d}t.
\end{align*}
For a framed loop $[\gamma,V]$ this map has the form
$$
\mu_{S^1}(\llbracket \gamma,V \rrbracket)=-\frac{1}{2} \int_{S^1} \mathrm{tw}(\gamma,V)\|\gamma'\|^2 \; \mathrm{d}t.
$$
\end{lem}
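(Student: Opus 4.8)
The plan is to verify the Hamiltonian condition directly, mirroring the two calculations already carried out for the $\ls$-action in Section~\ref{sec:hamiltonian_structure} and for the full $\Dif$-action in the preceding subsection. The basepoint $S^1\subset\Dif$ is generated by the constant vector field $\xi\equiv 1\in\mathcal{L}\R$ (rotation at unit speed), so the whole computation is a specialization of the $\Dif$ calculation to this single Lie algebra element. First I would record the induced vector field: setting $\xi=1$ in the formula $X_\xi|_{[\Phi]}=\mathrm{proj}\bigl(\tfrac12\xi'\Phi+\xi\Phi'\bigr)$ gives $X_1|_{[\Phi]}=\mathrm{proj}(\Phi')$, since $\xi'=0$. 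Then $\mu_{S^1}([\Phi])=\int_{S^1}\mathrm{Im}(\phi'\overline{\phi}+\psi'\overline{\psi})\,\mathrm{d}t$ is just the pairing of the full momentum map $\mu_\dif([\Phi])=\mathrm{Im}(\phi'\overline\phi+\psi'\overline\psi)$ with the constant $\xi=1$ under the $\mathcal{L}\R$ metric (up to the normalizing factor), so the candidate map is forced by consistency with the larger action.

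Next I would check the defining identity $Df_{S^1}([\Phi])(\delta\Phi)=\omega^{L^2}_{[\Phi]}(\delta\Phi,X_1|_{[\Phi]})$ for $\delta\Phi\in T_\Phi^{hor}\lSt$, where $f_{S^1}([\Phi])$ is $\mu_{S^1}([\Phi])$ paired with the constant generator. This is exactly the $\Dif$ computation of equations~(\ref{eqn:momentum_map_LHS}) and the subsequent display with $\xi$ set equal to the constant $1$: the left-hand side becomes $\tfrac12\int_{S^1}\mathrm{Im}(\phi'\overline{\delta\phi}+\delta\phi'\overline{\phi}+\psi'\overline{\delta\psi}+\delta\psi'\overline{\psi})\,\mathrm{d}t$, and the right-hand side, after integration by parts, reduces to the same expression because all the $\xi'$ terms drop out. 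Since the general identity was already established for arbitrary $\xi\in\mathcal{L}\R$, no new integration by parts is strictly needed; one simply invokes the preceding subsection with $\xi\equiv 1$. I would note explicitly that the constant generator lies in $\mathcal{L}\R$ so that the projection and the pairing are well-defined, and that the basepoint action genuinely sits inside $\Dif$ as the rigid rotations.

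Finally I would establish the framed-loop interpretation $\mu_{S^1}(\llbracket\gamma,V\rrbracket)=-\tfrac12\int_{S^1}\mathrm{tw}(\gamma,V)\|\gamma'\|^2\,\mathrm{d}t$. This follows from the twist-rate formula already recorded in the previous subsection, namely $\mathrm{tw}(\gamma,V)=2\,\mathrm{Im}\langle\Phi,\Phi'\rangle_{\C^2}/\|\Phi\|_{\C^2}^4$ together with $\|\Phi\|_{\C^2}^2=\|\gamma'\|$ from the scaling property in Lemma~\ref{lem:frame_hopf_map}(ii) and the length computation~(\ref{eqn:integral_length_calculation}). Substituting gives $\mathrm{Im}(\phi'\overline\phi+\psi'\overline\psi)=\mathrm{Im}\langle\Phi',\Phi\rangle_{\C^2}=-\mathrm{Im}\langle\Phi,\Phi'\rangle_{\C^2}=-\tfrac12\mathrm{tw}(\gamma,V)\|\gamma'\|^2$, and integrating yields the stated form; this is the same identity that already appeared when interpreting $\mu_\dif$. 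I expect the only genuine subtlety to be bookkeeping: confirming that the horizontal projection and the $\mathrm{U}(2)$-verticality argument (the symplectic form vanishes along $\mathrm{U}(2)$-orbit directions, so $\mathrm{proj}$ may be suppressed) carry over verbatim from the $\Dif$ case, and tracking the sign and the factor of $\tfrac12$ consistently between the $\mathcal{L}\R$ inner product normalization and the twist-rate formula. Everything else is a direct specialization of results proved earlier in this section.
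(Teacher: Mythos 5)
Your proposal is correct and follows essentially the same route as the paper, which simply asserts that the lemma "follows by a calculation similar to the one in the previous section": you specialize the $\Dif$ momentum-map verification to the constant generator $\xi\equiv 1$ and recover the framed-loop form from the twist-rate identity $\mathrm{tw}(\gamma,V)=2\,\mathrm{Im}\langle\Phi,\Phi'\rangle_{\C^2}/\|\Phi\|_{\C^2}^4$, exactly as intended. The factor-of-$\tfrac12$ discrepancy you flag between $\langle\mu_\dif([\Phi]),1\rangle_{\mathcal{L}\R}$ and the stated $\mu_{S^1}$ is absorbed into the choice of identification $\R\approx\R^\ast$, so it is harmless bookkeeping rather than a gap.
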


\begin{thm}\label{thm:twist_critical_points}
The critical points of $\widetilde{\mathrm{Tw}}:\mathcal{M}\rightarrow \R$ are equivalence classes $\llbracket \gamma,V \rrbracket$ such that $\gamma$ is an arclength parameterized, length-2, multiply covered round circle and $V$ has constant twist rate. In complex coordinates, the critical points are represented by symmetric torus knots on the Clifford torus in $\bbS^3$.
\end{thm}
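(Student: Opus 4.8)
The plan is to recognize $\widetilde{\mathrm{Tw}}$ as a momentum map and thereby reduce the problem to locating the fixed points of a circle action. Comparing the definition of $\widetilde{\mathrm{Tw}}$ with the formula of Lemma~\ref{lem:basepoint_moment_map} and using $\mathrm{d}s = \|\gamma'\|\,\mathrm{d}t$, one sees at once that $\widetilde{\mathrm{Tw}} = -\tfrac{1}{\pi}\mu_{S^1}$, so the critical points of $\widetilde{\mathrm{Tw}}$ coincide with those of the momentum map $\mu_{S^1}$ of the basepoint $S^1$-action. First I would invoke the defining identity $D\mu_{S^1}([\Phi])(\delta\Phi) = \omega^{L^2}_{[\Phi]}(\delta\Phi, X|_{[\Phi]})$, where $X$ generates the basepoint action: a point $[\Phi]$ is critical precisely when $\omega^{L^2}_{[\Phi]}(\delta\Phi, X|_{[\Phi]}) = 0$ for all $\delta\Phi$, and since $\omega^{L^2}$ is weakly nondegenerate this forces $X|_{[\Phi]} = 0$. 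Hence the critical points are exactly the fixed points of the basepoint action, i.e. the zeros of its generating vector field.

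Next I would compute this vector field in Grassmannian coordinates. The basepoint action is the restriction to $S^1\subset\Dif$ of the pure rotations, whose infinitesimal generator is the constant loop $\xi\equiv 1\in\mathcal{L}\R$. Substituting $\xi = 1$ (so $\xi' = 0$) into the formula $X_\xi|_{[\Phi]} = \mathrm{proj}\!\left(\tfrac12\xi'\Phi + \xi\Phi'\right)$ from Section~\ref{sec:diff_plus_action} gives $X|_{[\Phi]} = \mathrm{proj}(\Phi')$, where $\mathrm{proj}$ is orthogonal projection onto the $\mathrm{U}(2)$-horizontal space. Therefore $[\Phi]$ is critical if and only if $\Phi'$ is $\mathrm{U}(2)$-vertical, that is, $\Phi'(t) = \Phi(t)\zeta$ for some constant anti-Hermitian $\zeta\in\mathfrak{u}(2)$.

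The heart of the argument is to integrate this ODE and extract a normal form. Its solution is $\Phi(t) = \Phi(0)e^{t\zeta}$, and since we compute in the quotient $\lGr = \lSt/\mathrm{U}(2)$ I am free to diagonalize $\zeta$ by a right multiplication, reducing to $\Phi(t) = (a\,e^{i\lambda_1 t}, b\,e^{i\lambda_2 t})$. Requiring that $\Phi$ be a genuine loop (for $\lGr$), respectively an antiloop (for $\mathrm{Gr}_2(\mathcal{A}\C)$), forces $\lambda_j = \pi n_j$ with the $n_j$ integers, respectively half-integers; the $L^2$-orthonormality constraints defining $\St$ then force $|a| = |b| = 1/\sqrt2$ and $n_1\neq n_2$, while the residual diagonal phases are absorbed into $\mathrm{U}(2)$. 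This yields the representative $\Phi(t) = \tfrac{1}{\sqrt2}(e^{i\pi n_1 t}, e^{i\pi n_2 t})$, which traces a symmetric $(n_1,n_2)$ torus knot on the Clifford torus $\{|z| = |w| = 1/\sqrt2\}\subset\bbS^3$, as claimed.

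Finally I would translate back to framed loops through $\widehat{\mathrm{H}}$. Evaluating $\gamma' = \mathrm{Hopf}_1(\Phi)$ shows $\|\gamma'\|\equiv 1$ and $\gamma' = (0,\sin(\pi m t),\cos(\pi m t))$ with $m = n_1 - n_2$, so $\gamma$ is an arclength-parameterized, length-$2$, $|m|$-fold covered round circle; and the identity $\mathrm{tw}(\gamma,V) = 2\,\mathrm{Im}\langle\Phi,\Phi'\rangle_{\C^2}/\|\Phi\|_{\C^2}^4$ gives $\mathrm{tw}\equiv -\pi(n_1+n_2)$, a constant twist rate, with the converse being immediate. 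I expect the main obstacle to be the normal-form extraction of the third paragraph---in particular justifying that the loop/antiloop condition and $L^2$-orthonormality pin the frequencies to distinct (half-)integers and the amplitudes to the Clifford torus---rather than the formal reduction to fixed points, for which the only subtlety is the weak-nondegeneracy remark in the first paragraph.
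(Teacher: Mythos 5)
Your proposal is correct, and its first step (identifying $\widetilde{\mathrm{Tw}}$ with $-\tfrac{1}{\pi}\mu_{S^1}$ and reducing to the fixed points of the basepoint $S^1$-action) is exactly the paper's; but you then diverge in how the fixed points are characterized. The paper stays in framed-curve coordinates: it writes the fixed-point condition as $(\gamma(\cdot+u),V(\cdot+u))$ agreeing with $(\gamma,V)$ up to a rotation $A(u)$ and constant frame twist $e^{i\theta(u)}$, and deduces constant speed, then constant curvature, then constant torsion, then constant twist rate from the invariance of these quantities under rigid motions; the Clifford torus knots $\Phi_{h,k}$ are only exhibited afterwards and verified to map to the critical curves. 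You instead work entirely in the Grassmannian: the fixed-point condition becomes $\mathrm{proj}(\Phi')=0$, i.e.\ $\Phi'=\Phi\zeta$ for a constant $\zeta\in\mathfrak{u}(2)$, which you integrate and diagonalize to \emph{derive} the normal form $\tfrac{1}{\sqrt2}(e^{i\pi n_1 t},e^{i\pi n_2 t})$, with the loop/antiloop and orthonormality constraints pinning $n_1\neq n_2$ to (half-)integers and the amplitudes to the Clifford torus; the circle and constant-twist statements then fall out of $\mathrm{Hopf}_1$ and the twist formula. Your route has the advantage of producing the complex-coordinate classification directly rather than post hoc, and it sidesteps the paper's mildly delicate appeal to torsion being well-defined only where curvature is nonvanishing; the paper's route keeps the geometric content (why the base curve must be a round circle) more visible. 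The one point you should make explicit is the weak nondegeneracy of $\omega^{L^2}$ restricted to the horizontal space $T^{hor}_\Phi\St$ --- this holds because that space is a complex subspace, so $\omega(i w,w)=-\|w\|_{L^2}^2$ detects any nonzero $w$ --- but with that noted, the argument is complete.
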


\begin{proof}
Lemma \ref{lem:basepoint_moment_map} implies that the critical points of $\mu_{S^1}$ (and hence of $\widetilde{\mathrm{Tw}}$) are exactly the fixed points of the basepoint $S^1$-action. For $u \in S^1 \approx [0,2]/(0\sim 2)$, we abuse notation slightly and write the basepoint action as $\llbracket \gamma(t),V(t) \rrbracket \mapsto \llbracket \gamma(t+u),V(t+u) \rrbracket$.  Then $\llbracket \gamma,V \rrbracket$ is fixed under the basepoint action if and only if $\llbracket \gamma(t+u),V(t+u)\rrbracket =\llbracket \gamma(t),V(t)\rrbracket$ for all $t$ and $u$. Unraveling the notation, this means that for each $u \in S^1$ there exists $e^{i\theta(u)} \in \bbS^1$ and $A(u) \in \mathrm{SO}(3)$ such that 
\begin{equation}\label{eqn:total_twist}
(\gamma(t+u)-\gamma(u),V(t+u)-V(u)) =e^{i\theta(u)} \cdot A(u) \cdot \left[(\gamma(t)-\gamma(0),V(t)-V(0)) \right]
\end{equation}
holds for all $t$, where $e^{i\theta(u)}$ acts by a constant frame twist and $A(u)$ acts by a rigid rotation. In particular,
$$
\gamma(t+u) = A(u) \gamma(t) + (\gamma(u) - A(u) \cdot \gamma(0)).
$$
Taking the norm of the $t$-derivative of this expression yields
$$
\left\|\frac{d}{dt} \gamma(t+u)\right\| = \left\| \frac{d}{dt} A(u) \gamma(t)\right\| = \|\gamma'(t)\|
$$
for all $u$ and $t$, and we conclude that $\gamma$ must have constant parameterization speed. Since $\gamma$ has length 2, it must be that $\gamma$ is arclength-parameterized. Similarly, the invariance of curvature under rigid motions implies that $\gamma$ has constant curvature. The fact that $\gamma$ is a closed loop implies that its constant curvature must be positive, so its torsion is well-defined and the same argument can be applied to show that it must be constant as well. We conclude that $\gamma$ is a round, arclength-parameterized circle. The same type of argument shows that $(\gamma,V)$ must have constant twist rate.

For a critical $\llbracket \gamma,V \rrbracket$ we give an explicit complex representation as a knot on the Clifford torus in $\bbS^3 \subset \C^2$. Assume that $\gamma$ is an $h$-times-covered circle and that $\mathrm{Lk}(\gamma,V)=k$; that is, $V$ has constant twist rate $\pi k$. We claim that the Clifford torus knot
$$
\Phi_{h,k}(t):=\frac{1}{\sqrt{2}}\left(\exp\left(\frac{i}{2}(k+h)\pi t\right),\exp\left(\frac{i}{2}(k-h)\pi t\right)\right)
$$
maps to $\llbracket \gamma,V \rrbracket$ under $\widehat{\mathrm{H}}$. Indeed, applying $\widehat{\mathrm{H}}$ to $\Phi_{h,k}$ yields the $h$-times covered circle
$$
\gamma(t)=\frac{1}{\pi h} \left(0,-\cos(h \pi t), \sin (h \pi t)\right).
$$
Using the formula from Section \ref{sec:diff_plus_action}, it is straightforward to check that the twist rate of the framed curve $\widehat{\mathrm{H}}(\Phi_{h,k})$ is $\pi k$.
\end{proof}

%%%%%%%%%%%%%%%%%%%%%%%%%%%%%%%%%%%%%%%%%%%%%%%%%%%%%%%%%%%%%%%%%%%
\section{Riemannian Geometry of Framed Loop Space}\label{sec:riemannian_geometry}
%%%%%%%%%%%%%%%%%%%%%%%%%%%%%%%%%%%%%%%%%%%%%%%%%%%%%%%%%%%%%%%%%%%

%%%%%%%%%%%%%%%%%%%%%%%%%%%%%%%%%%%%%%%%%%%%%%%%%%%%%%%%%%%%%%%%%%%
\subsection{Explicit Geodesics in Framed Loop Space}\label{sec:explicit_geodesics}
%%%%%%%%%%%%%%%%%%%%%%%%%%%%%%%%%%%%%%%%%%%%%%%%%%%%%%%%%%%%%%%%%%%

The identifications of various moduli spaces of framed paths with classical manifolds given in Section \ref{sec:complex_coordinates} allow us to describe the geodesics of these moduli spaces quite concretely. We will focus on the geodesics of $\M$, which have the most interesting description. By Theorem \ref{thm:grassmannian}, the geodesics of the moduli space of framed loops $\mathcal{M}$ are locally the geodesics of $\Gr$ and can therefore be described explicitly. Let $[\Phi_0]$, $[\Phi_1] \in \mathrm{Gr}_2(\mathcal{V})$:
\begin{itemize}
\item[1.] Compute the singular value decomposition of the orthogonal projection map $[\Phi_0] \rightarrow [\Phi_1]$ (considering the points as complex 2-planes). This produces new orthonormal bases $\widetilde{\Phi}_0=\left(\tilde{\phi}_0,\tilde{\psi}_0\right)$ for $[\Phi_0]$ and $\widetilde{\Phi}_1=\left(\tilde{\phi}_1,\tilde{\psi}_1\right)$ for $[\Phi_1]$ such that the orthogonal projection map takes the form $\tilde{\phi}_0 \mapsto \lambda_\phi \tilde{\phi}_1$ and $\tilde{\psi}_0 \mapsto \lambda_\psi \tilde{\psi}_1$, where $0\leq \lambda_\phi, \lambda_\psi \leq 1$ are the singular values.

\item[2.] Let $\theta_\phi= \arccos (\lambda_\phi)$ and $\theta_\psi = \arccos (\lambda_\psi)$. These are the Jordan angles of $[\Phi_0]$ and $[\Phi_1]$.

\item[3.] Assuming generically that $\theta_\phi, \theta_\psi \neq 0$ (the formulas are easy to modify otherwise), the geodesic joining the subspaces is given by $[\Phi_u]$, where $\Phi_u=(\phi_u,\psi_u)$ is described by the formulas
\begin{align*}
\phi_u (t) &= \frac{\sin ((1-u) \theta_\phi) \tilde{\phi}_0(t) + \sin (u \theta_\phi) \tilde{\phi}_1(t)}{\sin \theta_\phi} \\
\psi_u (t) &= \frac{\sin ((1-u) \theta_\psi) \tilde{\psi}_0(t) + \sin (u \theta_\psi) \tilde{\psi}_1(t)}{\sin \theta_\psi}.
\end{align*}
\end{itemize}
The geodesic distance between $[\Phi_0]$ and $[\Phi_1]$ is $\sqrt{\theta_\phi^2 +\theta_\psi^2}$.

In \cite{younes} the authors showed that the space of similarity classes of immersed planar loops is locally isometric to a real infinite-dimensional Grassmannian (see Section \ref{subsubsec:elastic_shape_analysis} and Section \ref{sec:totally_geodesic_submanifolds} below) and they gave a similar description of geodesics in planar loop space. This description of Grassmannian geodesics is based on work of Neretin \cite{neretin}.

%%%%%%%%%%%%%%%%%%%%%%%%%%%%%%%%%%%%%%%%%%%%%%%%%%%%%%%%%%%%%%%%%%%
\subsection{Regularity Issues}
%%%%%%%%%%%%%%%%%%%%%%%%%%%%%%%%%%%%%%%%%%%%%%%%%%%%%%%%%%%%%%%%%%%

We note that the geodesics of $\Gr$ are only \emph{locally} the geodesics of $\M$, since a geodesic in $\Gr$ does not necessarily stay in the open subset $\mathrm{Gr}_2^\circ(\mathcal{V})$. The geodesic completion of $\mathrm{Gr}_2^\circ(\mathcal{V})$ is $\Gr$ and the geodesics in the full Grassmannian correspond to framed curve evolutions which pass through singular framed curves with nonimmersed points and degenerate framings. One benefit of this is that the completion $\mathrm{Gr}_2(\mathcal{L}\C) \cup \mathrm{Gr}_2(\mathcal{A}\C)$ is connected, but there are many technical issues which need to be considered here. Foremost is that one of the main goals that one has when using this framework for shape recognition applications is to compute geodesics in the space of unparameterized shapes $\M/\Dif$ by optimizing geodesic distance over $\Dif$-orbits. Orbits of the induced $\mathrm{Diff}^+(S^1)$-action on the full Grassmannian are not closed, whence the quotient $\Gr/\mathrm{Diff}^+(S^1)$ is not Hausdorff! This suggests that one should further take the metric completion of the Grassmannian and an appropriate completion of $\Dif$. These technical issues are beyond the scope of this paper and will be treated in future work \cite{needham2}. The rest of this paper will only require the use of sufficiently short geodesics so that lack of completeness will not cause any problems.

%%%%%%%%%%%%%%%%%%%%%%%%%%%%%%%%%%%%%%%%%%%%%%%%%%%%%%%%%%%%%%%%%%%
\subsection{Examples}
%%%%%%%%%%%%%%%%%%%%%%%%%%%%%%%%%%%%%%%%%%%%%%%%%%%%%%%%%%%%%%%%%%%

In Figure \ref{fig:geodesics} we give two examples of geodesics in $\M$ using this formula. The geodesic in the top row is between a trefoil with its standard torus knot parameterization and a round, arclength-parameterized circle, each with (the relative framings induced by) their Frenet framings. We denote the these endpoints by $(\gamma_1,V_1)$ and $(\gamma_2,V_2)$, respectively. Framed curves are represented as thickened base curves with a line along their surfaces representing the twisting of the frame. That this geodesic is in the moduli space $\M$ means that the starting and ending positions are optimally aligned over $\mathrm{SO}(3)$ and constant frame twists, and each curve throughout the homotopy is length 2 and based at the origin. Note that this geodesic goes through a singular framed curve with cusps, illustrating the fact that $\M$ is not geodesically closed. The geodesic distance between these framed curves is $\approx 0.71$, where we have normalized the Grassmannian to have diameter 2.

The geodesic in the second row also starts at a standard parameterization of a trefoil with its Frenet framing. It also ends at a circle, but the parameterization of this circle has been chosen to minimize geodesic distance between the fibers of the $\mathrm{Diff}^+(S^1)$-action. That is, the circle has been reparameterized by approximating a realization of
$$
\inf_{\rho \in \diff} \mathrm{dist}((\gamma_1,V_1),(\gamma_2 \circ \rho, V_2 \circ \rho)),
$$
where $\mathrm{dist}$ is geodesic distance in $\M$. This is the standard elastic shape analysis approach, as outlined in Section \ref{subsubsec:elastic_shape_analysis}. The infimum is approximated using a dynamic programming algorithm. The framing of the circle is then chosen to minimize the distance between the $\ls$-fibers of the framed curves. The evolution of the base curve can therefore be seen as a geodesic in the space $\mathcal{B}/\mathrm{Diff}^+(S^1)$ of unparameterized, unframed loops. The evolution is much less symmetric in this case. Also note that the resulting framing of the circle has twist rate zero outside of 3 small regions where all of the twisting is localized. The geodesic distance between these framed curves is $\approx 0.46$.

The details of the algorithms used to lift the curves to the Grassmannian in order to calculate the geodesic, to approximate the optimal parameterization and to find the optimal framing will be discussed in future work \cite{needham2}. We also plan to give more concrete applications of this framework to elastic shape analysis of shapes such as protein backbones, DNA minicircles and level curves on surfaces.

\begin{figure}\label{fig:geodesics}
\includegraphics[scale=0.25]{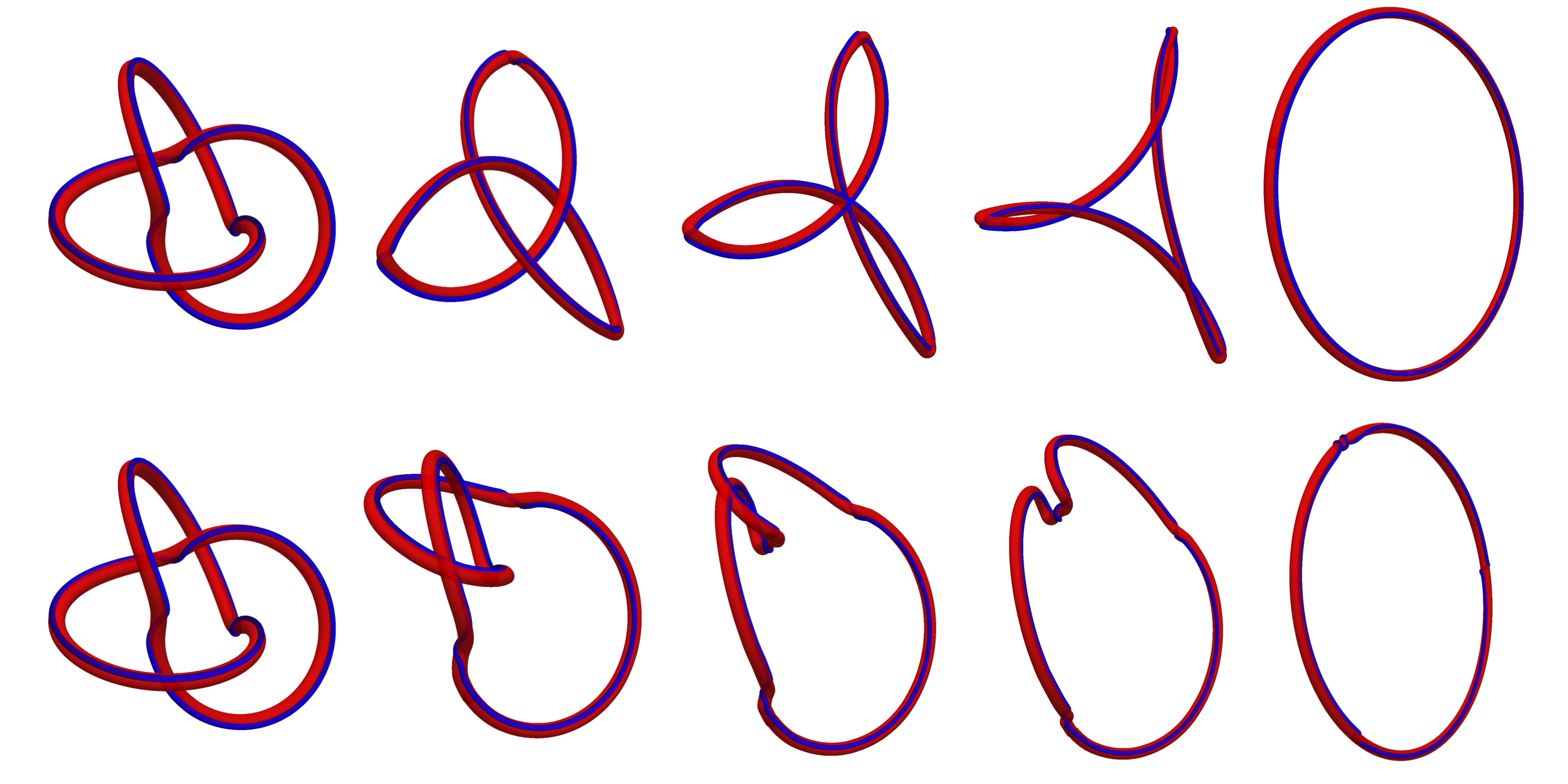}
\caption{Geodesics in $\M$; see text for details.}
\end{figure}

%%%%%%%%%%%%%%%%%%%%%%%%%%%%%%%%%%%%%%%%%%%%%%%%%%%%%%%%%%%%%%%%%%%
\subsection{The Exponential Map}
%%%%%%%%%%%%%%%%%%%%%%%%%%%%%%%%%%%%%%%%%%%%%%%%%%%%%%%%%%%%%%%%%%%

We can also obtain explicit formulas for the exponential maps in $\St$ and $\Gr$. The derivation is essentially the same as the derivation given by Edelman et.\ al.\  in \cite{edelman} for finite-dimensional real Stiefel manifolds. In the following proposition we identify $\Phi=(\phi,\psi) \in \mathcal{V}^2$ with the linear map $\C^{2 \times 2} \rightarrow \mathcal{V}^2$ defined by pointwise left multiplication. We use the notation $\Phi^\ast:\mathcal{V}^2 \rightarrow \C^{2 \times 2}$ for the \emph{formal adjoint} of the map $\Phi$, defined by
$$
\Phi^\ast (\phi_1,\psi_1)=\left(\begin{array}{cc}
\left<\phi_1,\phi \right>_{L^2} & \left<\psi_1,\phi \right>_{L^2} \\
\left<\phi_1,\psi \right>_{L^2} & \left<\psi_1,\psi \right>_{L^2}\end{array}\right).
$$
We are now able to state the geodesic equation and its solution for $\St$. In the proposition, paths in $\St$ are written as $\Phi_u$, where $u$ is the homotopy parameter. 

\begin{prop}\label{prop:geodesic_equation_stiefel}
The geodesic equation for $\St$ with respect to $g^{L^2}$ is
$$
\frac{\partial^2}{\partial u^2} \Phi_u + \Phi_u \left(\frac{\partial}{\partial u} \Phi_u \right) ^\ast \frac{\partial}{\partial u} \Phi_u = 0.
$$
The geodesic $\Phi_u$ with $\Phi_0=\Phi$ and $\left.\frac{\partial}{\partial u}\right|_{u=0} \Phi_u = \delta \Phi$ satisfying $\|\delta \Phi\|_{L^2}=1$ is given by
\begin{equation}\label{eqn:geodesic_solution}
\Phi_u = \left(\Phi,\delta \Phi\right) \exp u\left(\begin{array}{cc}
\Phi^\ast \delta \Phi & -\delta \Phi^\ast \delta \Phi \\
\mathrm{Id}_{2 \times 2} & \Phi^\ast \delta \Phi \end{array}\right) \mathrm{Id}_{4 \times 2} \exp(-u \Phi^\ast \delta \Phi),
\end{equation}
where $(\Phi,\delta\Phi)$ is treated as a map $\C^{4 \times 4} \rightarrow \mathcal{V}^2$.
\end{prop}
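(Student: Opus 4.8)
The plan is to follow the derivation of Edelman, Arias and Smith \cite{edelman} for finite-dimensional Stiefel manifolds, adapting it to the Hermitian $L^2$ setting; the argument is purely algebraic in the frame and so works uniformly for $\mathcal{V}=\mathcal{L}\C$ and $\mathcal{V}=\mathcal{A}\C$. The starting point is to view $\St = \{\Phi \in \mathcal{V}^2 : \Phi^\ast \Phi = \mathrm{Id}_{2\times 2}\}$ as a level set, with $\Phi$ regarded as the right-multiplication map $\C^{2\times 2}\to\mathcal{V}^2$ and $\Phi^\ast$ its formal adjoint as in the statement. Differentiating the constraint $\Phi^\ast\Phi = \mathrm{Id}$ along a curve shows that $T_\Phi\St = \{\delta\Phi : \Phi^\ast\delta\Phi \text{ is skew-Hermitian}\}$. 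An elementary trace computation (the product of a Hermitian and a skew-Hermitian $2\times 2$ matrix has purely imaginary trace) then identifies the $g^{L^2}$-orthogonal complement of $T_\Phi\St$ as the finite-dimensional normal space $N_\Phi\St = \{\Phi S : S\in\C^{2\times 2} \text{ Hermitian}\}$, and since $N_\Phi\St$ is finite-dimensional this gives a genuine splitting $\mathcal{V}^2 = T_\Phi\St \oplus N_\Phi\St$.

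Because $g^{L^2}$ is the metric induced on the submanifold $\St\subset\mathcal{V}^2$ of a flat ambient space, a curve $\Phi_u$ in $\St$ is a geodesic exactly when its ambient acceleration $\partial_u^2\Phi_u$ is everywhere normal, i.e. $\partial_u^2\Phi_u = \Phi_u S_u$ for some Hermitian $S_u$ (the Gauss formula). Differentiating $\Phi_u^\ast\Phi_u = \mathrm{Id}$ twice gives $(\partial_u^2\Phi_u)^\ast\Phi_u + 2(\partial_u\Phi_u)^\ast\partial_u\Phi_u + \Phi_u^\ast\partial_u^2\Phi_u = 0$, and combining this with the normality condition forces $S_u = -(\partial_u\Phi_u)^\ast\partial_u\Phi_u$. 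Substituting yields the stated geodesic equation. This settles the first claim, modulo the standard caveat that one is working formally with a weak $L^2$ metric on a Fréchet manifold; the Gauss-formula characterization is used exactly as in finite dimensions, as in \cite{edelman,younes}.

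To integrate the equation, I would exploit the fact that the acceleration $\partial_u^2\Phi_u = \Phi_u S_u$ is a right-combination of the columns of $\Phi_u$, so the finite-dimensional complex subspace $W\subset\mathcal{V}$ spanned by the four components of $\Phi=\Phi_0$ and $\delta\Phi=\partial_u\Phi_0$ is invariant: every $\Phi_u$ and $\partial_u\Phi_u$ has all columns in $W$. This reduces the problem to a linear ODE for the $4\times 2$ coefficient matrix of $\Phi_u$ relative to the frame $(\Phi,\delta\Phi)$, which is literally Edelman's finite-dimensional computation. The two quantities that linearize the system are $A := \Phi_u^\ast\partial_u\Phi_u$, whose $u$-derivative vanishes on using the geodesic equation and $\Phi_u^\ast\Phi_u=\mathrm{Id}$, so that $A \equiv \Phi^\ast\delta\Phi$, and $S_u := (\partial_u\Phi_u)^\ast\partial_u\Phi_u$, which satisfies $\partial_u S_u = [A,S_u]$ and hence equals $e^{uA}(\delta\Phi^\ast\delta\Phi)e^{-uA}$. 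Conjugating out this rotation (the factor $e^{-uA}$) turns the time-dependent system into a constant-coefficient one and produces the closed form (\ref{eqn:geodesic_solution}) with generator $B = \left(\begin{smallmatrix} A & -S \\ \mathrm{Id} & A\end{smallmatrix}\right)$, where $S = \delta\Phi^\ast\delta\Phi$.

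Finally I would verify the formula directly, which is the rigorous heart of the argument. The initial conditions $\Phi_0=\Phi$ and $\partial_u\Phi_0=\delta\Phi$ follow by evaluating the expression and its $u$-derivative at $u=0$. To confirm that the curve stays in $\St$, observe that $A$ skew-Hermitian makes $e^{-uA}$ unitary, so it suffices to show the $2\times 2$ block $P(u)^\ast G\, P(u)$ is constant, where $P(u)=\exp(uB)\,\mathrm{Id}_{4\times 2}$ and $G = \left(\begin{smallmatrix}\mathrm{Id} & A \\ -A & S\end{smallmatrix}\right)$ is the Gram matrix of $(\Phi,\delta\Phi)$; this reduces to the identity $B^\ast G + G B = 0$, a short block computation using $A^\ast=-A$ and $S^\ast=S$. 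That the curve then solves the geodesic equation follows since the acceleration is of the form $\Phi_u(\text{Hermitian})$, hence normal. The main obstacle is not any single step but the bookkeeping in the finite-dimensional reduction and the correct placement of the $e^{-uA}$ correction; the Fréchet and weak-metric subtleties are genuine but are circumvented, as usual, by treating the ambient $\mathcal{V}^2$ as flat and the frame span $W$ as finite-dimensional.
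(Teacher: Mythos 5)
Your proposal is correct and follows exactly the route the paper takes: the paper's ``proof'' simply defers to the derivation of Edelman--Arias--Smith (and its infinite-dimensional adaptations in Harms--Mennucci and the author's dissertation), and your argument is a faithful, correctly executed transcription of that derivation to the Hermitian $L^2$ setting, including the normal-space characterization, the conservation of $A=\Phi_u^\ast\partial_u\Phi_u$, and the block identity $B^\ast G+GB=0$ verifying the closed form. The reduction to the finite-dimensional span of $\Phi$ and $\delta\Phi$ that you use is likewise the content of the corollary the paper states immediately afterward, so nothing in your approach diverges from the intended proof.
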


We refer the reader to \cite[Section 2.2.2]{edelman} for the derivation in the finite-dimensional real case, to \cite{harms} for a description in the real infinite-dimensional case, and to the author's dissertation \cite[Section 4.5.3]{needham} for the full details of adapting the derivation to the complex infinite-dimensional case.

Proposition \ref{prop:geodesic_equation_stiefel} has an immediate but useful corollary. Let $\Phi \in \St$ and $\delta \Phi \in T_\Phi \St$ and let $\mathcal{W}=\mathrm{span}\{\Phi, \delta \Phi\} \subset \mathcal{V}^2$. Then $\mathcal{W}$ is a subspace of complex dimension at most $4$. Moreover, $\mathcal{W}$ inherits a Hermitian inner product by restricting $\left<\cdot,\cdot\right>_{L^2}$ and this defines an embedded finite-dimensional Stiefel manifold $\mathrm{St}_2(\mathcal{W}) \subset \St$.

\begin{cor}
The geodesic with initial data $\Phi$ and $\delta \Phi$ stays in $\mathrm{St}_2(\mathcal{W})$. 
\end{cor}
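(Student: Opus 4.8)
The plan is to read the conclusion directly off the explicit solution formula (\ref{eqn:geodesic_solution}) of Proposition \ref{prop:geodesic_equation_stiefel}. The essential point is that $\Phi_u$ is produced from the pair $(\Phi,\delta\Phi)$ by right-multiplication by a complex matrix that is \emph{constant in the curve parameter $t$}: the exponentiated $4\times 4$ matrix is built from the blocks $\Phi^\ast\delta\Phi$, $\delta\Phi^\ast\delta\Phi$ and $\mathrm{Id}_{2\times2}$, each of which is a fixed element of $\C^{2\times2}$ assembled from $L^2$ inner products of $\phi,\psi,\delta\phi,\delta\psi$, and the trailing factor $\exp(-u\Phi^\ast\delta\Phi)$ is likewise a fixed $2\times2$ complex matrix for each $u$. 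Collecting all the factors to the right of $(\Phi,\delta\Phi)$ into a single constant complex matrix $B(u)$, the formula reads $\Phi_u=(\Phi,\delta\Phi)\,B(u)$, and matrix multiplication of the $\mathcal{V}$-valued row $(\phi,\psi,\delta\phi,\delta\psi)$ by $B(u)$ expresses each of $\phi_u,\psi_u$ as a $\C$-linear combination of $\phi,\psi,\delta\phi,\delta\psi$.

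First I would make this bookkeeping precise. Since $\mathcal{W}=\mathrm{span}_\C\{\phi,\psi,\delta\phi,\delta\psi\}$ is by definition closed under complex linear combinations, the observation above yields $\phi_u,\psi_u\in\mathcal{W}$, that is $\Phi_u\in\mathcal{W}^2$, for every $u$. (If $\delta\Phi=0$ the geodesic is constant and the claim is trivial; otherwise the normalization $\|\delta\Phi\|_{L^2}=1$ required to invoke the formula is harmless, as rescaling $\delta\Phi$ only reparametrizes the geodesic and does not change $\mathcal{W}$.) On the other hand, being a geodesic of $\St$, the curve $\Phi_u$ remains in $\St$, so each frame $(\phi_u,\psi_u)$ is $L^2$-orthonormal. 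Combining the two facts, $\Phi_u$ is an $L^2$-orthonormal $2$-frame with entries in $\mathcal{W}$, which is exactly membership in $\mathrm{St}_2(\mathcal{W})$, because $\mathrm{St}_2(\mathcal{W})=\St\cap\mathcal{W}^2$ (the orthonormality conditions coincide, as the Hermitian product on $\mathcal{W}$ is the restriction of that on $\mathcal{V}$).

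A second, more conceptual route, which I would include as a remark since it exhibits $\mathrm{St}_2(\mathcal{W})$ as a \emph{totally geodesic} submanifold, proceeds via uniqueness of solutions to the geodesic equation. Because $\mathcal{W}\hookrightarrow\mathcal{V}$ is isometric for the restricted Hermitian product, the formal adjoint $\Phi^\ast$ computed in $\mathcal{W}$ agrees with the one computed in $\mathcal{V}$ whenever the arguments lie in $\mathcal{W}^2$; hence the nonlinear term $\Phi_u(\partial_u\Phi_u)^\ast\partial_u\Phi_u$ of the geodesic equation sends $\mathcal{W}^2$-valued data back into $\mathcal{W}^2$. Thus the finite-dimensional geodesic of $\mathrm{St}_2(\mathcal{W})$ with initial data $(\Phi,\delta\Phi)$ also solves the geodesic equation of $\St$ with the same data, so by uniqueness the two coincide and the $\St$-geodesic never leaves $\mathrm{St}_2(\mathcal{W})$.

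There is no genuine analytic obstacle here; this is an immediate corollary of the explicit formula. The only point demanding care is the interpretation of the matrix products: one must confirm that every factor standing to the right of $(\Phi,\delta\Phi)$ is a constant complex matrix, so that it acts by forming $\C$-linear combinations of $\phi,\psi,\delta\phi,\delta\psi$ rather than by any pointwise (in $t$) operation that might enlarge their span. Once that is pinned down the argument is complete, so I would present the explicit-formula computation as the main proof and mention the uniqueness argument only in passing.
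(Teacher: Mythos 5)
Your argument is correct and is exactly the reasoning the paper intends: the corollary is stated as ``immediate'' from formula (\ref{eqn:geodesic_solution}) precisely because every factor to the right of $(\Phi,\delta\Phi)$ is a constant complex matrix, so $\phi_u,\psi_u$ are $\C$-linear combinations of $\phi,\psi,\delta\phi,\delta\psi$ and hence lie in $\mathcal{W}$, while orthonormality is preserved since $\Phi_u$ remains in $\St$. The additional uniqueness/totally-geodesic remark is a nice complement but not needed.
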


Our next goal is to show that the exponential map for $\Gr$ is well-defined. The following lemma is proved in the same way as Lemma 7 of \cite{harms}, which treats the real Hilbert space case.

\begin{lem}\label{lem:horizontal_paths}
For any path $\Phi_u$, $u \in [0,1]$, in $\St$, there exists a path $A_u$ in $\mathrm{U}(2)$ so that $\Phi_u  A_u$ is horizontal with respect to the $\mathrm{U}(2)$-action.
\end{lem}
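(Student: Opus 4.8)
The plan is to reduce the construction of a horizontal lift to a linear ordinary differential equation in the \emph{finite}-dimensional structure group $\mathrm{U}(2)$, exploiting the fact that the fibre dimension of the bundle $\St \to \Gr$ does not depend on the dimension of $\mathcal{V}$. First I identify the connection form. For $\Phi \in \St$ one has $\Phi^\ast \Phi = \mathrm{Id}_{2\times 2}$, and differentiating the three constraints cutting out $\St$ shows that $\Phi^\ast X$ is skew-Hermitian, i.e.\ $\Phi^\ast X \in \mathfrak{u}(2)$, for every $X \in T_\Phi \St$; moreover the horizontal distribution is exactly $T^{hor}_\Phi \St = \ker(\Phi^\ast)$, and $\Phi^\ast(\Phi \xi) = \xi$ for $\xi \in \mathfrak{u}(2)$. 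Thus $\omega_\Phi := \Phi^\ast|_{T_\Phi \St}$ serves as an $\mathfrak{u}(2)$-valued connection one-form whose kernel is the horizontal space.

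Next I write the lift as $\Psi_u = \Phi_u A_u$ for an unknown path $A_u$ in $\mathrm{U}(2)$ with $A_0 = \mathrm{Id}$, and impose horizontality $\Psi_u^\ast \, \partial_u \Psi_u = 0$. Using $\Psi_u^\ast = A_u^\ast \Phi_u^\ast$ (since $\mathrm{U}(2)$ acts by pointwise right multiplication), $\Phi_u^\ast \Phi_u = \mathrm{Id}$ and $\partial_u \Psi_u = (\partial_u \Phi_u) A_u + \Phi_u \, \partial_u A_u$, a short computation collapses the horizontality condition to
$$
\partial_u A_u = -\Omega_u A_u, \qquad \Omega_u := \Phi_u^\ast \, \partial_u \Phi_u \in \mathfrak{u}(2).
$$
The coefficient $\Omega_u$ is a curve in the finite-dimensional space $\mathfrak{u}(2)$, and it is smooth in $u$ because its entries are $L^2$-pairings of the smooth path $\Phi_u$ and its derivative against $\phi_u,\psi_u$.

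This is a finite-dimensional linear ODE with smooth coefficients on $[0,1]$, so Picard--Lindel\"of yields a unique global solution $A_u$; crucially, no infinite-dimensional existence theory is needed, which is exactly why the argument of \cite[Lemma 7]{harms} transfers without change. Finally, since $\Omega_u$ is skew-Hermitian,
$$
\partial_u (A_u^\ast A_u) = A_u^\ast \Omega_u A_u - A_u^\ast \Omega_u A_u = 0,
$$
so $A_u^\ast A_u \equiv \mathrm{Id}$ and the solution remains in $\mathrm{U}(2)$, giving the asserted horizontal reparameterization $\Phi_u A_u$.

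I expect the only genuine obstacle to be bookkeeping rather than analysis: one must verify carefully that $\Phi^\ast$ really lands in $\mathfrak{u}(2)$ on tangent vectors (this is precisely where the orthonormality constraints defining $\St$ enter) and track the adjoint identity $\Psi_u^\ast = A_u^\ast \Phi_u^\ast$ against the chosen right-action convention, so that the signs and orders in the ODE come out correctly. Once the connection form is pinned down, the infinite-dimensionality of $\mathcal{V}$ never intervenes, because the entire lift is governed by the compact group $\mathrm{U}(2)$.
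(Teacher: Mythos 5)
Your proof is correct and is essentially the argument the paper invokes: the paper proves this lemma by citing Lemma 7 of Harms--Mennucci, whose proof is exactly the reduction of horizontality to the linear ODE $\partial_u A_u = -\Omega_u A_u$ in $\mathrm{U}(2)$ with $\Omega_u = \Phi_u^\ast\,\partial_u\Phi_u \in \mathfrak{u}(2)$, which you have carried out (and correctly adapted to the complex/Hermitian setting, including the verification that the solution stays unitary).
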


Now we note that if a geodesic in $\St$ has initial data $(\Phi,\delta \Phi)$ with $\delta \Phi \in T_\Phi^{hor}\St$, then the geodesic stays horizontal. Otherwise it could be shortened by applying the projection of Lemma \ref{lem:horizontal_paths}. We conclude that a geodesic in $\St$ with horizontal initial data represents a geodesic in $\Gr$. The next proposition follows immediately.

\begin{prop}\label{prop:exponential_map}
The exponential map $\mathrm{exp}:T_{[\Phi]} \Gr \rightarrow \Gr$ is well-defined.
\end{prop}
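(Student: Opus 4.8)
The plan is to show that the exponential map on the Grassmannian is well-defined by reducing it to the already-established exponential map on the Stiefel manifold (Proposition \ref{prop:geodesic_equation_stiefel}), using horizontality to pass between the two. First I would fix $[\Phi] \in \Gr$ and a horizontal tangent vector; that is, I represent a tangent vector to $[\Phi]$ by an element $\delta \Phi \in T_\Phi^{hor} \St$ via the identification $T_{[\Phi]}\Gr \approx T_\Phi^{hor}\St$ established earlier. The candidate for $\exp_{[\Phi]}(\delta\Phi)$ is then $[\Psi_1]$, where $\Psi_u$ is the Stiefel geodesic with initial data $(\Phi, \delta\Phi)$ given explicitly by the closed form \eqref{eqn:geodesic_solution}. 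The content of the proposition is precisely that this assignment is well-defined, which requires two checks: that the Stiefel geodesic stays horizontal (so that it legitimately descends to a path in $\Gr$), and that the resulting point $[\Psi_1]$ depends only on $[\Phi]$ and the tangent vector, not on the chosen lift $\Phi$ of $[\Phi]$.

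The first check is the one the excerpt has already set up: the preceding paragraph observes that a Stiefel geodesic with horizontal initial data stays horizontal, since otherwise applying the $\mathrm{U}(2)$-valued projection of Lemma \ref{lem:horizontal_paths} would produce a strictly shorter path with the same endpoints, contradicting the geodesic (locally length-minimizing) property. I would record this as the key lemma: horizontal initial velocity implies a globally horizontal geodesic, so $\Psi_u$ projects to a genuine geodesic $[\Psi_u]$ in $\Gr$, and the projected path has $[\Psi_0] = [\Phi]$ with initial velocity the given horizontal vector. For the second check, any other lift of $[\Phi]$ has the form $\Phi A$ for some $A \in \mathrm{U}(2)$, and the corresponding horizontal lift of the same tangent vector is $\delta\Phi \, A$; the $\mathrm{U}(2)$-equivariance of the explicit solution \eqref{eqn:geodesic_solution} — which follows because the metric $g^{L^2}$ is $\mathrm{U}(2)$-invariant and the geodesic is determined by its initial data — shows that the new Stiefel geodesic is $\Psi_u A$, which has the same image $[\Psi_u]$ in the quotient. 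Hence the projected endpoint is independent of the lift.

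The main obstacle, and the part requiring the most care, is the horizontality-preservation step rather than the equivariance bookkeeping. In the infinite-dimensional tame Fréchet setting one must be sure that the variational ("shortening") argument is valid: this uses that the $\mathrm{U}(2)$-orbits are smooth submanifolds with $\Phi_u A_u$ an admissible competitor path (Lemma \ref{lem:horizontal_paths}) and that geodesics are locally minimizing for the weak $L^2$ metric. Since the metric is only a weak Riemannian metric, I would lean on the fact — guaranteed by the Corollary to Proposition \ref{prop:geodesic_equation_stiefel} — that the whole geodesic with data $(\Phi,\delta\Phi)$ lies in the finite-dimensional embedded Stiefel manifold $\mathrm{St}_2(\mathcal{W})$ with $\mathcal{W} = \mathrm{span}\{\Phi,\delta\Phi\}$. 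This reduces every assertion (minimization, horizontality, equivariance) to a statement in a finite-dimensional Riemannian manifold, where the classical theory applies directly, thereby sidestepping the analytic subtleties of the weak metric. With that reduction in hand the proposition follows immediately, matching the excerpt's assertion that it "follows immediately."
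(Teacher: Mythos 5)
Your proposal is correct and follows essentially the same route as the paper: the paper's proof is precisely the observation that a Stiefel geodesic with horizontal initial data stays horizontal (else it could be shortened via Lemma \ref{lem:horizontal_paths}) and hence descends, together with the explicit solution of Proposition \ref{prop:geodesic_equation_stiefel}. Your additional checks---independence of the lift via $\mathrm{U}(2)$-equivariance of the geodesic equation, and the reduction to the finite-dimensional $\mathrm{St}_2(\mathcal{W})$ to justify the minimization argument for the weak $L^2$ metric---are sensible elaborations of details the paper leaves implicit, not a different approach.
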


%%%%%%%%%%%%%%%%%%%%%%%%%%%%%%%%%%%%%%%%%%%%%%%%%%%%%%%%%%%%%%%%%%%
\subsection{Planar Loop Space}\label{sec:totally_geodesic_submanifolds}
%%%%%%%%%%%%%%%%%%%%%%%%%%%%%%%%%%%%%%%%%%%%%%%%%%%%%%%%%%%%%%%%%%%

Consider the subset $\mathcal{L}\R \subset \mathcal{L}\C$. Let $\mathrm{Gr}_2(\mathcal{L}\R)$ denote the Grassmann manifold of \emph{real} 2-planes in $\mathcal{L}\R$. The Neretin geodesics of $\mathrm{Gr}_2(\mathcal{L}\R) \subset \mathrm{Gr}_2(\mathcal{L}\C)$ (with respect to its induced $L^2$ metric) are also of the form presented in Section \ref{sec:explicit_geodesics} and it follows that $\mathrm{Gr}_2(\mathcal{L}\R)$ is a totally geodesic submanifold of $\mathrm{Gr}_2(\mathcal{L}\C)$. Moreover, the submanifold is Lagrangian---this follows by exactly the same argument as for the finite-dimensional embedding $\mathrm{Gr}_2(\R^n) \subset \mathrm{Gr}_2(\C^n)$.

The Grassmannian $\mathrm{Gr}_2(\mathcal{L}\R)$ has a curve-theoretic interpretation. Consider the \emph{moduli space of planar loops}
$$
\mathcal{N}:=\mathrm{Imm}(S^1,\R^2)/(\R^2 \times \mathrm{SO}(2) \times \R^+)=\mathrm{Imm}(S^1,\R^2)/\mathrm{Sim}(\R^2).
$$
This space was studied by Younes, Michor, Shah and Mumford in \cite{younes} for its applications to shape recognition, where it was shown that $\mathrm{Gr}_2(\mathcal{L}\R)$ with its natural $L^2$ metric is locally isometric to $\mathcal{N}$  with the elastic metric $g^{1,1}$ (see Section \ref{subsubsec:elastic_shape_analysis} for the definition).

Putting these ideas together, we have the following proposition. 

\begin{prop}\label{prop:tot_geod_lagrangian}
The moduli space of planar loops $\mathcal{N}$ embeds as a totally geodesic, Lagrangian submanifold of the moduli space of framed loops $\M$.
\end{prop}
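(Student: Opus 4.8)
The plan is to combine the three facts assembled just above the statement---that $\mathcal{N}$ is (locally) isometric to $\mathrm{Gr}_2^\circ(\mathcal{L}\R)$ with its natural $L^2$ metric, that $\mathrm{Gr}_2(\mathcal{L}\R)$ is totally geodesic in $\mathrm{Gr}_2(\mathcal{L}\C)$, and that it is Lagrangian---and to transport the resulting submanifold into $\M$ through the diffeomorphism $\mathrm{Gr}_2^\circ(\mathcal{L}\C) \approx \M_{od}$ of Theorem \ref{thm:grassmannian}. (The antiloop version $\mathrm{Gr}_2^\circ(\mathcal{A}\R) \subset \mathrm{Gr}_2^\circ(\mathcal{A}\C) \approx \M_{ev}$ is handled identically and accounts for the other component.) First I would fix the embedding as the inclusion $\mathrm{Gr}_2^\circ(\mathcal{L}\R) \hookrightarrow \mathrm{Gr}_2^\circ(\mathcal{L}\C)$ sending a real $2$-plane $\mathrm{span}_\R\{\phi,\psi\}$ (with $\phi,\psi \in \mathcal{L}\R$ an $L^2$-orthonormal pair) to the complex span $\mathrm{span}_\C\{\phi,\psi\}$, and then compose with $\widehat{\mathrm{H}}$.

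Next I would record the curve-theoretic meaning of this inclusion, which both clarifies the statement and will be reused. Feeding a representative $\Phi=(\phi,\psi)$ with $\phi,\psi$ real into the Hopf-map formula (\ref{eqn:hopf_map_complex_coords}) collapses the off-diagonal terms: one computes $\mathrm{Hopf}_1(\Phi) = (\phi^2-\psi^2,\,0,\,2\phi\psi)$ and $\mathrm{Hopf}_2(\Phi) = (0,\,\phi^2+\psi^2,\,0)$. Hence $\widehat{\mathrm{H}}(\Phi)$ is a loop whose tangent stays in the $e_1e_3$-plane, framed by the constant out-of-plane normal $V \equiv e_2$; that is, $\mathrm{Gr}_2^\circ(\mathcal{L}\R)$ corresponds under $\widehat{\mathrm{H}}$ to planar loops carrying their constant normal framing. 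I would then check the metric compatibility needed for the word ``embeds'': the metric $\M$ inherits is the reduction of $g^{L^2}$, and for real horizontal tangents $X,Y$ one has $g^{L^2}(X,Y)=\mathrm{Re}\langle X,Y\rangle_{L^2}=\langle X,Y\rangle_{L^2}$, so its restriction to $\mathrm{Gr}_2(\mathcal{L}\R)$ is exactly the natural $L^2$ Grassmannian metric, which by \cite{younes} is the one locally isometric to $(\mathcal{N},g^{1,1})$.

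With the embedding fixed it remains to verify the two geometric properties, and here I would lean on the arguments indicated before the statement. For \emph{totally geodesic}: given two real $2$-planes, the projection map between them is represented by a real matrix, so its singular value decomposition may be taken with real singular vectors; the aligned bases $\widetilde{\Phi}_0,\widetilde{\Phi}_1$ in the Neretin description of Section \ref{sec:explicit_geodesics} are therefore real, and the geodesic $\phi_u,\psi_u$ is a real linear combination of real functions at every $u$. Thus a geodesic with initial data tangent to $\mathrm{Gr}_2(\mathcal{L}\R)$ never leaves it, which is the totally-geodesic condition. For \emph{Lagrangian}: at a real point the horizontal space splits as $T^{hor}_\Phi\mathrm{St}_2(\mathcal{L}\C) = T^{\R}\oplus iT^{\R}$, where $T^{\R}$ is the subspace of real variations, and these summands are $g^{L^2}$-orthogonal because $\langle X,Y\rangle_{L^2}\in\R$ for real $X,Y$ forces $g^{L^2}(X,iY)=\mathrm{Im}\langle X,Y\rangle_{L^2}=0$. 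Since $\omega^{L^2}(X,Y)=g^{L^2}(iX,Y)$, the subspace $T^{\R}$ is isotropic and its symplectic orthogonal is $(iT^{\R})^{\perp}=T^{\R}$, so $T^{\R}$ is Lagrangian---exactly the argument for $\mathrm{Gr}_2(\R^n)\subset\mathrm{Gr}_2(\C^n)$.

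I expect the only genuinely delicate point to be the first step: confirming that the identification of \cite{younes} selects precisely the real Grassmannian structure picked out above by $\widehat{\mathrm{H}}$ (i.e.\ that their square-root-type coordinate on planar loops agrees, up to the double cover, with the constant-normal framing realization), and that the two $L^2$ metrics match on the nose rather than up to a constant that would weaken ``isometric embedding.'' Once these identifications are aligned---together with the routine checks that the open condition $\mathrm{St}_2^\circ$ matches the immersion condition defining $\mathcal{N}$ and that the loop/antiloop distinction sorts planar loops into the two components as in Proposition \ref{prop:stiefel_manifold}---the totally-geodesic and Lagrangian computations are formal, being word-for-word the finite-dimensional ones.
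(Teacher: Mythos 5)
Your proposal is correct and follows essentially the same route as the paper: the paper's proof also takes the embedding to be the inclusion $\mathrm{Gr}_2(\mathcal{L}\R) \hookrightarrow \mathrm{Gr}_2(\mathcal{L}\C)$, identified on the curve side with $[\gamma] \mapsto [(\gamma^1,0,\gamma^2),V]$ where $V$ is the constant out-of-plane framing, and then invokes the Neretin geodesic description for the totally geodesic property and the finite-dimensional $\mathrm{Gr}_2(\R^n)\subset\mathrm{Gr}_2(\C^n)$ argument for the Lagrangian property, together with the Younes--Michor--Shah--Mumford identification of $\mathcal{N}$ with the real Grassmannian. You simply supply more of the details (the explicit Hopf computation, the real SVD, the isotropy calculation) that the paper leaves implicit.
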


\begin{proof}
We choose a particular embedding $\mathcal{N} \hookrightarrow \mathcal{M}$ as follows. Let $[\gamma] \in \mathcal{N}$ with $\gamma=(\gamma^1,\gamma^2)$. We map $[\gamma]$ to $[(\gamma^1,0,\gamma^2),V] \in \mathcal{M}$, where $V$ is the \emph{planar framing} constantly pointing in the $y$-direction. In complex coordinates, this is exactly the embedding  $\mathrm{Gr}_2(\mathcal{L}\R) \hookrightarrow \mathrm{Gr}_2(\mathcal{L}\C)$ by inclusion.
\end{proof}

%%%%%%%%%%%%%%%%%%%%%%%%%%%%%%%%%%%%%%%%%%%%%%%%%%%%%%%%%%%%%%%%%%%
\subsection{Sectional Curvatures}\label{sec:sectional_curvatures}
%%%%%%%%%%%%%%%%%%%%%%%%%%%%%%%%%%%%%%%%%%%%%%%%%%%%%%%%%%%%%%%%%%%

In this section we show that $\mathcal{M}$ and its quotient by $\Dif$ are nonnegatively curved with respect to $g^\op$, echoing similar results for spaces of planar curves in \cite{bauer,younes}. It was shown in Section \ref{sec:basepoint_action} that the action of $\Dif$ on $\M$ is not free, whence $\M/\Dif$ has a discrete collection of singular points. It will therefore be convenient to use the decomposition $\Dif = \mathrm{Diff}_0^+(S^1) \times S^1$, where $\mathrm{Diff}_0^+(S^1)$ is the subgroup of diffeomorphisms of $S^1=[0,2]/(0\sim 2)$ which fix $0$. We will then consider the open submanifold $\M^f \subset \M$ containing the points on which $\Dif$ acts freely and the quotient manifolds $\M/\mathrm{Diff}_0^+(S^1)$ and $\M^f/\mathrm{Diff}^+(S^1)$.

\begin{prop}
The spaces $\M/\mathrm{Diff}_0^+(S^1)$ and $\M^f/\Dif$ are manifolds.
\end{prop}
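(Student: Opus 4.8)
The plan is to derive both statements from the infinite-dimensional quotient manifold theorem: if a tame Fréchet Lie group acts smoothly, freely and properly on a tame Fréchet manifold and admits local slices, then the orbit space is a manifold and the orbit projection is a locally trivial bundle. The smoothness of the reparameterization action $\rho\cdot[\Phi]=[\sqrt{\rho'}\,\Phi\circ\rho]$ is already in hand. The new input I would supply by hand is freeness, which is exactly where the splitting $\Dif=\mathrm{Diff}_0^+(S^1)\times S^1$ does the work; properness and the slice construction I would import from the elastic shape analysis literature. Throughout I treat the component $\mathrm{Gr}_2^\circ(\mathcal{L}\C)\approx\M_{od}$, the antiperiodic component being identical.

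First I would establish properness and use it to get freeness. Properness of the reparameterization action on spaces of immersions with $L^2$-type metrics is proved in \cite{bauer, michor, younes}; transported through the identification of Theorem \ref{thm:grassmannian}, the induced $\Dif$-action on $\M$ is proper (the action here is the framed-curve analogue of theirs). Since a proper action has compact isotropy groups, $\mathrm{Stab}_\Dif(p)$ is a compact subgroup of $\Dif$ for every $p\in\M$. Now every compact subgroup of $\mathrm{Diff}^+(S^1)$ is conjugate into the rotation group $S^1$: averaging a Riemannian metric on $S^1$ over the Haar measure of the subgroup makes it act by orientation-preserving isometries, i.e. rotations, of the averaged metric. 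Writing $\mathrm{Stab}_\Dif(p)=\eta^{-1}K\eta$ with $K\subseteq S^1$, a conjugate $\eta^{-1}\tau\eta$ of a rotation $\tau$ fixes the basepoint $0\in S^1$ if and only if $\tau$ fixes $\eta(0)$; as a nontrivial rotation of $S^1$ is fixed-point free, this forces $\tau=\mathrm{id}$. Hence $\mathrm{Stab}_\Dif(p)\cap\mathrm{Diff}_0^+(S^1)$ is trivial for all $p$, so $\mathrm{Diff}_0^+(S^1)$ acts freely on all of $\M$. Freeness for $\M^f/\Dif$ is immediate, since $\M^f$ is by definition the locus of trivial $\Dif$-isotropy. (Consistently, the fixed-point analysis of Theorem \ref{thm:twist_critical_points} identifies the most degenerate isotropy, the full rotation $S^1$, with the multiply covered circles.) Properness then descends to the closed subgroup $\mathrm{Diff}_0^+(S^1)$ and restricts to the open invariant submanifold $\M^f$, so both actions are free and proper.

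It remains to produce local slices, which is the main obstacle, since $\Dif$ is only a tame Fréchet Lie group and $g^{L^2}$ is a weak metric, so the Palais slice theorem does not apply verbatim. I would handle this by passing to Sobolev completions: for $s$ large the Grassmannian of $H^s$-frames is a Hilbert manifold, $\mathrm{Diff}^{s+1}(S^1)$ acts smoothly, and the metric becomes strong, so a Palais-type slice is obtained by exponentiating, via the exponential map of Proposition \ref{prop:exponential_map}, a closed complement to the orbit tangent space $\{X_\xi=\mathrm{proj}(\tfrac{1}{2}\xi'\Phi+\xi\Phi'):\xi\in\mathcal{L}\R\}$ inside $T_{[\Phi]}\mathrm{Gr}_2^\circ(\mathcal{L}\C)$. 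This yields Hilbert-manifold quotients at each level $s$; taking the inverse limit over $s$ in the Nash--Moser/ILH formalism recovers the smooth tame Fréchet manifold structures on $\M/\mathrm{Diff}_0^+(S^1)$ and $\M^f/\Dif$. I expect the genuinely delicate point to be precisely this slice step: in the weak-metric setting the orthogonal complement of the orbit tangent need not be closed or complemented, so the complement used to build the slice must be chosen at finite Sobolev regularity before passing to the limit.
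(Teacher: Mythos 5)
Your overall strategy (free $+$ proper $+$ slices $\Rightarrow$ quotient manifold) could in principle be pushed through, but as written it has genuine gaps at precisely the load-bearing steps. First, you import properness of the $\Dif$-action from \cite{bauer, michor, younes} and then lean on it twice: to get compact isotropy (hence freeness of $\mathrm{Diff}_0^+(S^1)$) and as a hypothesis of the quotient theorem. Properness is not established in those references for this framed setting, and it is genuinely delicate here: the paper itself observes that $\Dif$-orbits in the full Grassmannian are not closed and that $\mathrm{Gr}_2(\mathcal{V})/\Dif$ fails to be Hausdorff, so properness certainly fails on the completion and would require a real argument on $\mathrm{Gr}_2^\circ(\mathcal{V})$. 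Freeness of $\mathrm{Diff}_0^+(S^1)$, at least, can be had unconditionally and much more cheaply: if $\rho(0)=0$ and $\llbracket\gamma\circ\rho, V\circ\rho\rrbracket = \llbracket\gamma,V\rrbracket$, then since translations, rotations and constant frame twists all preserve the speed $\|\gamma'\|$, the arclength function $s(t)=\int_0^t\|\gamma'(\tilde t)\|\,\mathrm{d}\tilde t$ satisfies $s\circ\rho=s$, and strict monotonicity of $s$ forces $\rho=\mathrm{id}$; you should not route this through properness. Second, the slice construction is only sketched: you yourself flag that in the weak-$L^2$, tame Fr\'{e}chet setting the orthogonal complement of the orbit tangent space need not be closed or complemented, and the Sobolev-completion/inverse-limit program you outline is a substantial piece of analysis that is never carried out. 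As it stands the argument is conditional on both of these unproven inputs.

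The paper avoids all of this machinery. The key observation is that constant-speed reparameterization based at $0$ provides a \emph{global} cross-section: every class in $\M$ has a unique arclength-parameterized representative modulo the remaining symmetries, so $\M/\mathrm{Diff}_0^+(S^1)$ is identified with the space $\M_1$ of arclength-parameterized framed loops, already shown to be a manifold in Proposition \ref{prop:arclength_parameterized_submanifolds}. The second space is then $\left(\M^f/\mathrm{Diff}_0^+(S^1)\right)/S^1$, the quotient of a manifold by a free action of the compact group $S^1$, which is standard. No properness, no slices, no Nash--Moser. If you want to salvage your approach, note that the arclength normalization \emph{is} the slice -- globally -- so the honest fix is to let that normalization do the work directly rather than appealing to a general slice theorem.
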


\begin{proof}
The space of arclength-parameterized framed loops $\M_1$ is a global cross-section to the free action of $\mathrm{Diff}^+_0(S^1)$ on $\M$ and it was shown in Proposition \ref{prop:arclength_parameterized_submanifolds} that $\M_1$ is a manifold. It is straightforward to show that $\M^f/\Dif$ is a manifold by identifying it with $\left(\M^f/\mathrm{Diff}_0^+(S^1)\right)/S^1$.
\end{proof}

To show that these manifolds are nonnegatively curved, we will use an immediate corollary of O'Neill's formula \cite{oneill}: if a Riemannian manifold $(M,g)$ has nonnegative sectional curvature and  $(M,g) \rightarrow (M',g')$ is a Riemannian submersion, then $(M',g')$ is also nonnegatively curved. We need to show that certain projection operators are well-defined in order to apply O'Neill's formula to this infinite-dimensional setting. In the following lemmas, the terms \emph{vertical} and \emph{horizontal} are used with respect to $\mathrm{Diff}^+_0(S^1)$-orbits. We use $\mathcal{L}_0\R$ to denote the space of real-valued loops based at $0$; this is the Lie algebra of $\mathrm{Diff}_0^+(S^1)$.

We define the \emph{curvatures of a framed curve} $(\gamma,V)$ by the formulas
$$
\kappa_1= \left<\frac{d}{ds} T, V \right> \;\; \mbox{ and } \;\; \kappa_2 = \left<\frac{d}{ds} T, T \times V \right>,
$$
where $T=\gamma'/\|\gamma'\|$. These are related to the curvature $\kappa$ of the base curve by $\kappa^2 = \kappa_1^2 + \kappa_2^2$. 

\begin{lem}\label{lem:horizontal_space}
The vertical tangent spaces of $\cl$ contain tangent vectors of the form
$$
(\delta \gamma, \delta V) = \left(\xi T, \xi (-\kappa_1 T + \mathrm{tw} W)\right),
$$
where $\xi \in \mathcal{L}_0\R$, and the horizontal space contains vectors $(\delta \gamma, \delta V)$ satisfying 
$$
\left<\frac{d^2}{ds^2} \delta \gamma, T\right> - \left<\delta V , W \right> \mathrm{tw} = 0.
$$
\end{lem}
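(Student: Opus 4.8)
The plan is to compute the infinitesimal generators of the $\mathrm{Diff}_0^+(S^1)$-action explicitly and then impose $g^\op$-orthogonality against them. First I would differentiate the reparameterization action: given $\xi \in \mathcal{L}_0\R$, choose a path $\rho_\epsilon$ in $\mathrm{Diff}_0^+(S^1)$ with $\rho_0 = \mathrm{id}$ and $\left.\frac{d}{d\epsilon}\right|_{\epsilon=0}\rho_\epsilon = \xi$; the chain rule applied to $(\gamma \circ \rho_\epsilon, V\circ\rho_\epsilon)$ produces the induced variation $(\delta\gamma, \delta V) = (\xi\gamma', \xi V')$, which is automatically tangent to $\cl$ since the action preserves $\cl$. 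To put this in the stated form I would rewrite $\gamma'$ and $V'$ in the moving frame $\{T,V,W\}$. One has $\gamma' = \|\gamma'\|\,T$, and using $\langle V,V\rangle\equiv 1$, $\langle V,T\rangle\equiv 0$ together with the definitions of $\kappa_1$ and $\mathrm{tw}$, the structure equation $\frac{d}{ds}V = -\kappa_1 T + \mathrm{tw}\,W$ follows, so $V' = \|\gamma'\|(-\kappa_1 T + \mathrm{tw}\,W)$. Absorbing the positive factor $\|\gamma'\|$ into $\xi$ (which remains in $\mathcal{L}_0\R$, since $\xi(0)=0$ is preserved) yields exactly the displayed vertical vectors.

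For the horizontal space I would pair a general tangent vector $(\delta\gamma, \delta V)$ against the vertical vector $(\delta\gamma_2, \delta V_2) = (\xi T,\, \xi(-\kappa_1 T + \mathrm{tw}\,W))$ under $g^\op$ and require the result to vanish for every $\xi \in \mathcal{L}_0\R$. Writing $\frac{d}{ds}\delta\gamma = pT + qV + rW$ with $p = \langle \frac{d}{ds}\delta\gamma, T\rangle$ and so on, the two contributions to the integrand are $\langle \frac{d}{ds}\delta\gamma, \frac{d}{ds}\delta\gamma_2\rangle$ and $\langle \delta V, W\rangle\langle \delta V_2, W\rangle$. Here I would expand $\frac{d}{ds}\delta\gamma_2 = \frac{d}{ds}(\xi T) = \left(\frac{d}{ds}\xi\right)T + \xi(\kappa_1 V + \kappa_2 W)$, using $\frac{d}{ds}T = \kappa_1 V + \kappa_2 W$, and note $\langle \delta V_2, W\rangle = \xi\,\mathrm{tw}$. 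Pairing gives the integrand $p\,\frac{d}{ds}\xi + \kappa_1 q\,\xi + \kappa_2 r\,\xi + \langle \delta V, W\rangle\,\mathrm{tw}\,\xi$.

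The crucial step is integration by parts in arclength on the $p\,\frac{d}{ds}\xi$ term. Since $\int_0^2 p\,\frac{d}{ds}\xi\,ds = -\int_0^2 \left(\frac{d}{ds}p\right)\xi\,ds$ with no boundary contribution (because $\xi(0)=\xi(2)=0$), and $\frac{d}{ds}p = \langle \frac{d^2}{ds^2}\delta\gamma, T\rangle + \kappa_1 q + \kappa_2 r$, the curvature cross-terms $\kappa_1 q$ and $\kappa_2 r$ exactly cancel against the terms coming from the $V$- and $W$-components of $\frac{d}{ds}\delta\gamma_2$. This collapses the whole integral to $\frac{1}{4}\int_0^2 \left[-\langle \frac{d^2}{ds^2}\delta\gamma, T\rangle + \langle \delta V, W\rangle\,\mathrm{tw}\right]\xi\,ds$. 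Invoking the fundamental lemma of the calculus of variations — valid because $\mathcal{L}_0\R$ contains bump functions supported away from the basepoint and the bracketed integrand is continuous — forces the bracket to vanish, which is precisely the asserted horizontal condition $\langle \frac{d^2}{ds^2}\delta\gamma, T\rangle - \langle \delta V, W\rangle\,\mathrm{tw} = 0$. I expect this cancellation of the curvature terms, together with the careful bookkeeping distinguishing arclength from parameter derivatives during the integration by parts, to be the main technical point; the remaining manipulations are direct computations.
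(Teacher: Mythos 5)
Your proposal is correct and follows essentially the same route as the paper's proof: differentiate the reparameterization action to obtain the vertical generators $(\xi\gamma',\xi V')=(\xi\|\gamma'\|T,\xi\|\gamma'\|(-\kappa_1 T+\mathrm{tw}\,W))$ and absorb $\|\gamma'\|$ into $\xi$, then pair a general tangent vector against these, integrate by parts in arclength, and invoke the fundamental lemma of the calculus of variations. Your explicit frame expansion making the cancellation of the $\kappa_1 q+\kappa_2 r$ terms visible is just a more detailed rendering of the paper's appeal to the orthonormality of $(T,V,W)$.
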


\begin{proof}
Let $\rho_\epsilon$ be a path in $\mathrm{Diff}_0^+(S^1)$ with $\rho_0$ the identity and $\left.\frac{d}{d\epsilon}\right|_{\epsilon = 0} \rho_\epsilon = \xi \in \mathcal{L}_0 \R$. Then
$$
\left. \frac{d}{d \epsilon} \right|_{\epsilon = 0} (\gamma(\rho_\epsilon), V(\rho_{\epsilon})) = (\xi \gamma', \xi V') = (\xi \|\gamma'\| T, \xi \|\gamma'\|(-\kappa_1 T + \mathrm{tw} W)),
$$
where the second equality follows by the fact that $\frac{d}{ds} V = -\kappa_1 T + \mathrm{tw} W$. Absorbing $\|\gamma'\|$ into $\xi$ gives the characterization of the vertical space. 

A tangent vector $(\delta \gamma, \delta V)$ is horizontal if and only if
$$
g^{\open}_{(\gamma,V)}((\delta \gamma, \delta V),(\xi T, \xi(-\kappa_1 T + \mathrm{tw} W))) = 0
$$
for all $\xi \in \mathcal{L}_0\R$. Then for all $\xi$,
\begin{align}
0 &= \frac{1}{4} \int_{S^1} \left<\frac{d}{ds}\delta \gamma, \frac{d}{ds} \xi T\right> + \left<\delta V, W\right> \left<\xi (-\kappa_1 T + \mathrm{tw} W),W\right> \; \mathrm{d}s  \nonumber \\
&= \frac{1}{4} \int_{S^1} \xi \left\{-\left<\frac{d^2}{ds^2} \delta \gamma, T\right> + \left<\delta V, W\right>\mathrm{tw}\right\} \; \mathrm{d}s, \label{eqn:horizontal_space_proof}
\end{align}
where we have integrated by parts and used the orthonormality of $(T,V,W)$ to obtain the last line. The integral vanishes for every $\xi  \in \mathcal{L}_0\R$ if and only if the bracketed term in (\ref{eqn:horizontal_space_proof}) is identically zero.
\end{proof}

\begin{prop}\label{prop:tangent_space_splits}
There exist orthogonal projections from the tangent space $T_{(\gamma,V)} \cl$ to its vertical and horizontal subspaces. 
\end{prop}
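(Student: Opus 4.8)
The plan is to build the orthogonal projection $P_{\mathrm{vert}}$ onto the vertical subspace and then set $P_{\mathrm{hor}} := \mathrm{id} - P_{\mathrm{vert}}$. Write $Z_\xi := (\xi T, \xi(-\kappa_1 T + \mathrm{tw}\, W))$ for the vertical vector attached to $\xi \in \mathcal{L}_0\R$, as in Lemma \ref{lem:horizontal_space}. Given a tangent vector $X = (\delta\gamma, \delta V) \in T_{(\gamma,V)}\cl$, I would look for $\xi \in \mathcal{L}_0\R$ making $X - Z_\xi$ orthogonal to every vertical vector, i.e. solving
$$g^\op(Z_\xi, Z_\eta) = g^\op(X, Z_\eta) \qquad \text{for all } \eta \in \mathcal{L}_0\R.$$
Thus the entire problem collapses to solving this one linear equation for $\xi$, and the real content is showing it has a unique smooth solution depending smoothly on $X$.

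First I would identify the bilinear form on the left. Using $\frac{d}{ds}T = \kappa_1 V + \kappa_2 W$, $\frac{d}{ds}V = -\kappa_1 T + \mathrm{tw}\, W$, the orthonormality of $(T,V,W)$, and $\kappa^2 = \kappa_1^2 + \kappa_2^2$, a short calculation gives
$$g^\op(Z_\xi, Z_\eta) = \frac{1}{4}\int_{S^1} \xi'\eta' + (\kappa^2 + \mathrm{tw}^2)\,\xi\eta \; \mathrm{d}s,$$
which is an $H^1$-type inner product on $\mathcal{L}_0\R$ with nonnegative potential $\kappa^2 + \mathrm{tw}^2$. For the right-hand side, expanding $\frac{d}{ds}(\eta T) = \eta' T + \eta(\kappa_1 V + \kappa_2 W)$ in the definition of $g^\op$ and integrating the $\eta'$-term by parts (no boundary contribution on $S^1$) rewrites the functional $\eta \mapsto g^\op(X, Z_\eta)$ in the form $\eta \mapsto \int_{S^1} f_X\, \eta \; \mathrm{d}s$ for an explicit smooth function $f_X$ assembled from $\delta\gamma$, $\delta V$ and the curvatures.

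The defining equation is therefore the weak form of the Sturm--Liouville problem
$$L\xi := -\tfrac{1}{4}\xi'' + \tfrac{1}{4}(\kappa^2 + \mathrm{tw}^2)\,\xi = f_X$$
for $\xi \in \mathcal{L}_0\R$. Since the potential is nonnegative and $\xi(0) = 0$ forbids nonzero constants, the form $g^\op(Z_\xi, Z_\xi)$ is positive definite, so $L$ is injective; being a second-order elliptic operator with smooth coefficients on the compact manifold $S^1$, it is Fredholm of index $0$, hence invertible, and elliptic regularity makes $L^{-1}f_X$ smooth. This yields a unique $\xi$, whence $P_{\mathrm{vert}} X := Z_\xi$, and one checks that $P_{\mathrm{vert}}$ is idempotent, $g^\op$-self-adjoint, and has the horizontal space of Lemma \ref{lem:horizontal_space} as its kernel.

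The main obstacle is not formal solvability but the analytic bookkeeping required to remain in the tame Fr\'echet category: one must verify that $X \mapsto \xi = L^{-1}f_X$ is a smooth tame map of the relevant Fr\'echet spaces, which is precisely the kind of statement controlled by the inverse-function machinery of \cite{hamilton} and by the argument of \cite[Lemma~7]{harms} already invoked for Lemma \ref{lem:horizontal_paths}. A secondary subtlety is the basepoint condition defining $\mathcal{L}_0\R$: testing only against $\eta$ with $\eta(0) = 0$ determines $L\xi - f_X$ only up to a Dirac mass concentrated at the basepoint, and it is the pointwise constraint $\xi(0) = 0$ that fixes this remaining degree of freedom. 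With these points established, the orthogonal splitting holds and O'Neill's formula applies to give the nonnegative curvature conclusions of this section.
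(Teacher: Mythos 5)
Your reduction is the same as the paper's: you identify the vertical space with $\{Z_\xi : \xi \in \mathcal{L}_0\R\}$, compute $g^\op(Z_\xi,Z_\eta)=\tfrac14\int \xi'\eta'+(\kappa^2+\mathrm{tw}^2)\xi\eta\,\mathrm{d}s$ (correct), integrate by parts, and reduce the existence of the projection to solving the Sturm--Liouville equation $\xi''-(\kappa^2+\mathrm{tw}^2)\xi = \bigl\langle \tfrac{d^2}{ds^2}\delta\gamma, T\bigr\rangle - \langle \delta V, W\rangle\,\mathrm{tw}$, which is exactly the equation the paper arrives at. The only structural difference is that the paper outsources the solvability of this equation to Lemma \ref{lem:invertible_operator} (adapted from \cite[Lemma 4.5]{michor}), whereas you argue invertibility directly via positivity of the quadratic form, Fredholm index zero, and elliptic regularity.

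The one step I would push back on is your handling of the basepoint constraint, which is precisely where your route departs from the paper's. Your Fredholm argument (injective self-adjoint elliptic operator of index $0$ on a compact manifold) establishes invertibility of $L$ as a map $\mathcal{L}\R \to \mathcal{L}\R$; it does not by itself give invertibility of the restriction $L:\mathcal{L}_0\R \to \mathcal{L}\R$, which is what the splitting requires and what Lemma \ref{lem:invertible_operator} asserts. Your proposed remedy --- solving $L\xi = f_X + c\,\delta_0$ and choosing $c$ so that $\xi(0)=0$ --- does not stay in the category you need: a solution of $L\xi = f_X + c\,\delta_0$ with $c \neq 0$ has a jump in $\xi'$ at the basepoint and so does not lie in $\mathcal{L}_0\R \subset C^\infty(S^1)$; and if you instead insist on smooth $\xi$, then testing against the dense set $\{\eta : \eta(0)=0\}$ already forces $L\xi = f_X$ pointwise, so no Dirac degree of freedom is actually available. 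The paper avoids this by taking the invertibility of $L:\mathcal{L}_0\R\to\mathcal{L}\R$ as a black box; if you want a self-contained argument you should prove that statement (or the solvability of $L\xi = f_X$ within $\mathcal{L}_0\R$ for the specific right-hand sides $f_X$ arising from $X \in T_{(\gamma,V)}\cl$) rather than the invertibility of $L$ on all of $\mathcal{L}\R$. Your remaining remarks --- idempotence and self-adjointness of $P_{\mathrm{vert}}$, tameness of $X \mapsto L^{-1}f_X$ --- are consistent with what the paper leaves implicit.
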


The proof of the proposition requires a technical lemma, which is proved by mildly adapting the proof of \cite[Lemma 4.5]{michor}. 

\begin{lem}\label{lem:invertible_operator}
Let $(\gamma,V)$ be a framed loop. The operator $L:\mathcal{L}_0\R \rightarrow \mathcal{L} \R$ defined by
$$
L:\xi \mapsto \frac{d^2}{ds^2} \xi - (\kappa^2+\mathrm{tw}^2) \cdot \xi
$$
is invertible.
\end{lem}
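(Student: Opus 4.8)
The plan is to treat $L$ as a second-order, formally self-adjoint elliptic operator on the circle and to combine a positivity (energy) identity with Fredholm theory, exactly as in the adaptation of \cite[Lemma 4.5]{michor} that is advertised. Write $q := \kappa^2 + \mathrm{tw}^2$, so that $L = \tfrac{d^2}{ds^2} - q$ with $q \ge 0$. Since $\tfrac{d}{ds} = \|\gamma'\|^{-1}\tfrac{d}{dt}$ and $ds = \|\gamma'\|\,dt$, a one-line computation shows that $\tfrac{d}{ds}$ is skew-adjoint for the pairing $\int_{S^1} fg\,ds$, hence $L$ is formally self-adjoint for it; and because $\|\gamma'\|>0$ its leading symbol never vanishes, so $L$ is elliptic. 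First I would extend $L$ to a bounded operator between the arclength Sobolev completions $H^2(S^1)\to L^2(S^1)$, where the standard machinery applies: as an elliptic operator equal to its own formal adjoint on the compact manifold $S^1$, it is Fredholm of index $0$.

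The injectivity step is where the geometry of closed loops is essential, and I would carry it out first. If $L\xi=0$, pairing against $\xi$ and integrating by parts over the closed curve (so that the boundary term drops) gives
\[
0 \;=\; \int_{S^1}\!\Big(\xi\,\tfrac{d^2}{ds^2}\xi - q\,\xi^2\Big)\,ds \;=\; -\int_{S^1}\!\Big(\big(\tfrac{d}{ds}\xi\big)^2 + q\,\xi^2\Big)\,ds .
\]
Both integrands are nonnegative, so $\tfrac{d}{ds}\xi\equiv 0$ and $q\,\xi^2\equiv 0$; thus $\xi$ is constant. The decisive point is that $q\not\equiv 0$: a closed immersed loop cannot be a straight segment, so its curvature $\kappa$, and hence $q\ge\kappa^2$, is strictly positive on a nonempty open set, where the constant $\xi$ must therefore vanish. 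Hence $\xi\equiv 0$ and the kernel is trivial.

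Trivial kernel together with Fredholm index $0$ forces the cokernel to vanish, so $L\colon H^2\to L^2$ is a bijection; elliptic regularity then shows that if $\eta$ is smooth the solution of $L\xi=\eta$ is smooth, giving invertibility at the level of the tame Fr\'echet space $\mathcal{L}\R$. To legitimize this in Hamilton's category \cite{hamilton} I would record that the inverse (the Green's operator of $L$) satisfies tame Sobolev estimates and is therefore a smooth tame map, which is what ultimately makes the orthogonal projections of Proposition \ref{prop:tangent_space_splits} smooth; the based normalization cutting out $\mathcal{L}_0\R$ is then imposed as in \cite{michor}, matching the parametrization of the vertical space in Lemma \ref{lem:horizontal_space}.

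I expect the main obstacle to be the surjectivity together with the regularity and tameness, rather than the algebra: in the Fr\'echet setting one cannot invert $L$ directly on smooth sections, so the argument must descend to the Sobolev completions where Fredholm theory is available, solve there, and then climb back to smooth sections via elliptic regularity, all while controlling the inverse by tame estimates so that the construction is valid for Fr\'echet manifolds. By comparison, the injectivity is elementary once the geometric input $q\not\equiv 0$ — a consequence of $\gamma$ being a closed loop — has been isolated.
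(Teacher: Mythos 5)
Your strategy is exactly the one the paper intends: the paper's own ``proof'' is a one-line deferral to \cite[Lemma 4.5]{michor}, and your writeup---formal self-adjointness of $L$ for the arclength pairing, ellipticity, Fredholm index zero on the Sobolev completions, kernel killed by the energy identity together with the observation that a closed immersed loop has $\kappa\not\equiv 0$, then elliptic regularity and tame estimates to climb back to smooth sections---is precisely the ``mild adaptation'' being invoked. Each of those steps is correct as you state it.

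The one place your argument does not close is the step you postpone to the last clause: the lemma's domain is $\mathcal{L}_0\R=\{\xi\in\mathcal{L}\R \mid \xi(0)=0\}$, not $\mathcal{L}\R$, and ``imposing the based normalization as in \cite{michor}'' is not a harmless afterthought. What your Fredholm argument actually establishes is that $L\colon\mathcal{L}\R\to\mathcal{L}\R$ is a bijection (trivial kernel plus index zero). Restricting an injective linear bijection to the codimension-one subspace $\mathcal{L}_0\R$ produces a map whose image has codimension one in $\mathcal{L}\R$, so the restricted operator is injective but \emph{cannot} be surjective; no amount of elliptic regularity or tame estimates will repair this. So either the lemma is to be read with domain $\mathcal{L}\R$---in which case your proof is essentially complete, and the question of whether $L^{-1}(f)$ vanishes at $0$ must be dealt with separately where the inverse is used in Proposition \ref{prop:tangent_space_splits}---or the statement as written asks for something your own index computation rules out. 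The based condition is a feature of this paper's $\mathrm{Diff}_0^+(S^1)$ setup with no counterpart in the planar-curve argument you are adapting, so it has to be confronted explicitly rather than outsourced to \cite{michor}.
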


\begin{proof}[Proof of Proposition \ref{prop:tangent_space_splits}]
If the projections exist then we can express an arbitrary tangent vector as
\begin{equation}\label{eqn:splitting_lemma}
(\delta \gamma, \delta V) = (\xi T, \xi(-\kappa_1 V + \mathrm{tw} W)) + (\delta \gamma^{hor} ,\delta V^{hor}),
\end{equation}
where $(\delta \gamma^{hor} ,\delta V^{hor})$ is horizontal and $\xi \in \mathcal{L}_0\R$. From (\ref{eqn:splitting_lemma}) we conclude
\begin{equation}\label{eqn:splitting_lemma_2}
\left<\frac{d^2}{ds^2} \delta \gamma, T\right> = \frac{d^2}{ds^2} \xi - \xi \kappa^2 + \left<\frac{d^2}{ds^2} \delta \gamma^{hor},T\right>.
\end{equation}
On the other hand $\left<\delta V, W\right> = \xi \mathrm{tw} + \left<\delta V^{hor}, W\right>$. Multiplying this expression by $\mathrm{tw}$ and subtracting the result from (\ref{eqn:splitting_lemma_2}) yields
$$
\left<\frac{d^2}{ds^2} \delta \gamma, T\right> - \left<\delta V, W\right>\mathrm{tw} = \frac{d^2}{ds^2} \xi - \xi \kappa^2 + \left<\frac{d^2}{ds^2} \delta \gamma^{hor},T\right> -\xi\mathrm{tw}^2 - \left<\delta V^{hor}, W\right>\mathrm{tw}.
$$
The horizontality characterization of Lemma \ref{lem:horizontal_space} simplifies this to 
$$
\left<\frac{d^2}{ds^2} \delta \gamma, T\right> - \left<\delta V, W\right>\mathrm{tw} = \frac{d^2}{ds^2}\xi - (\kappa^2 + \mathrm{tw}^2) \xi = L(\xi),
$$
where $L$ is the invertible linear operator from Lemma \ref{lem:invertible_operator}. Therefore we define
$$
\xi := L^{-1} \left(\left<\frac{d^2}{ds^2} \delta \gamma, T\right> - \left<\delta V, W\right>\mathrm{tw} \right).
$$
This gives us a well-defined projection onto the vertical space of $\cl$ and this suffices to prove the proposition.
\end{proof}

\begin{cor}\label{cor:projection_diff_M}
There exist well-defined orthogonal projections onto the vertical and horizontal tangent spaces of $\M$. 
\end{cor}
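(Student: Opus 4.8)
The plan is to descend the orthogonal splitting of Proposition~\ref{prop:tangent_space_splits} along the Riemannian submersion $\cl \to \M$. Fixing the length-$2$ normalization, the residual group $G := \mathrm{SO}(3) \times \bbS^1$ acts on $\cl$ by isometries of $g^\op$, since rotations and constant frame twists preserve arc length and the integrand defining $g^\op$, and this action commutes with the reparameterization action of $\mathrm{Diff}_0^+(S^1)$. Thus $\cl \to \M$ is a Riemannian submersion, and I would identify $T_{[\gamma,V]}\M$ with the $G$-horizontal space $H := (T^G)^{\perp} \subset T_{(\gamma,V)}\cl$, where $T^G$ is the (at most $4$-dimensional) tangent space to the $G$-orbit. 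Since $\mathrm{Diff}_0^+(S^1)$ commutes with $G$ and preserves $g^\op$, it descends to an isometric action on $\M$ whose vertical space at $[\gamma,V]$ corresponds, under the isometry $d\pi|_H$, to $P_H(\mathfrak{v})$. Here $\mathfrak{v} := T^{vert}_{(\gamma,V)}\cl$ is the $\mathrm{Diff}_0^+(S^1)$-vertical space of Lemma~\ref{lem:horizontal_space} and $P_H$ is orthogonal projection onto $H$.

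The key reduction is the identity
$$
(P_H\mathfrak{v})^{\perp}\cap H \;=\; H \cap \mathfrak{v}^{\perp} \;=\; (T^G + \mathfrak{v})^{\perp},
$$
which holds because every $w \in H$ satisfies $g^\op(w, P_H x) = g^\op(w, x)$ for all $x \in \mathfrak{v}$. Producing the desired projections on $\M$ therefore reduces to producing the orthogonal projection of $T_{(\gamma,V)}\cl$ onto $T^G + \mathfrak{v}$. For this I would combine the orthogonal projection $Q$ onto $\mathfrak{v}$ supplied by Proposition~\ref{prop:tangent_space_splits} with the finite-dimensionality of $T^G$: because the $\mathrm{Diff}_0^+(S^1)$-action on $\M$ is free (the arclength slice $\M_1$ being a global cross-section, as established above), one has the transversality $\mathfrak{v} \cap T^G = \{\vec{0}\}$, so $(\mathrm{Id}-Q)(T^G)$ is a finite-dimensional subspace of $\mathfrak{v}^{\perp}$ and $T^G + \mathfrak{v} = \mathfrak{v} \oplus (\mathrm{Id}-Q)(T^G)$ is an orthogonal direct sum. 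A finite-rank orthogonal projection $R$ onto $(\mathrm{Id}-Q)(T^G)$ exists by Gram--Schmidt, and $Q + R$ is then the orthogonal projection onto $T^G + \mathfrak{v}$.

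Assembling the pieces, $w \mapsto (\mathrm{Id}-(Q+R))\,w$ restricts on $H$ to the orthogonal projection onto $(T^G+\mathfrak{v})^{\perp}$; pushing it forward by $d\pi|_H$ yields the orthogonal projection of $T_{[\gamma,V]}\M$ onto the $\mathrm{Diff}_0^+(S^1)$-horizontal space, while $w \mapsto (Q+R)\,w$ gives the complementary projection onto the vertical space $P_H(\mathfrak{v})$. These operators are well defined on $\M$ because $Q$ is built solely from the $G$-invariant data $T$, $W$, $\kappa$, $\mathrm{tw}$ and arc-length derivatives appearing in Proposition~\ref{prop:tangent_space_splits}, hence is $G$-equivariant, and $R$ is the (unique, therefore equivariant) orthogonal projection onto the $G$-equivariant distribution $(\mathrm{Id}-Q)(T^G)$.

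I expect the main obstacle to be precisely that $\mathfrak{v}$ is \emph{not} $G$-horizontal: it carries a nonzero component along the frame-twisting orbit direction $(0,2W)$, since
$$
g^\op\!\left((\xi T,\,\xi(-\kappa_1 T + \mathrm{tw}\,W)),\,(0,2W)\right) = \tfrac12\int_{S^1}\xi\,\mathrm{tw}\;\mathrm{d}s
$$
need not vanish. Consequently the projection of Proposition~\ref{prop:tangent_space_splits} cannot simply be restricted to $\M$, and the real content is the orthogonal recombination of $Q$ with the finite-dimensional orbit directions $T^G$. Finite-dimensionality of $G$ together with transversality of the two commuting actions are exactly what guarantee that $T^G + \mathfrak{v}$ is closed and admits an orthogonal projection in this tame Fréchet setting.
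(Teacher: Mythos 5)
Your proof is correct and follows the same basic strategy as the paper---descend the orthogonal splitting of Proposition~\ref{prop:tangent_space_splits} through the finite-dimensional quotient $\cl \rightarrow \M$---but it is considerably more careful, and in fact it repairs a real gap in the paper's own one-line argument. The paper simply identifies $T_{[\gamma,V]}\M$ with the finite-codimension subspace $H=(T^G)^{\perp}$ of $T_{(\gamma,V)}\cl$ and then composes the projection of Proposition~\ref{prop:tangent_space_splits} with the projection onto $H$. Read literally, the composite $P_H\circ Q$ restricted to $H$ is self-adjoint but need not be idempotent, precisely because of the obstruction you isolate at the end: the $\mathrm{Diff}_0^+(S^1)$-vertical directions $\left(\xi T,\ \xi(-\kappa_1 T+\mathrm{tw}\,W)\right)$ of Lemma~\ref{lem:horizontal_space} pair nontrivially with the constant-frame-twist orbit direction $(0,2W)$ (the pairing being $\tfrac12\int_{S^1}\xi\,\mathrm{tw}\;\mathrm{d}s$), so the two orthogonal projections do not commute and their composite is not itself an orthogonal projection. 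Your recombination---writing $T^G+\mathfrak{v}=\mathfrak{v}\oplus(\mathrm{Id}-Q)(T^G)$ as an orthogonal sum, taking $Q+R$ with $R$ the finite-rank projection onto $(\mathrm{Id}-Q)(T^G)$, and then passing to complements inside $H$ via the identity $(P_H\mathfrak{v})^{\perp}\cap H=(T^G+\mathfrak{v})^{\perp}$---is exactly what is needed to make the existence claim rigorous, and your equivariance remark correctly handles well-definedness on $\M$. The only inessential step is the appeal to freeness to obtain $\mathfrak{v}\cap T^G=\{\vec{0}\}$: the orthogonal decomposition $T^G+\mathfrak{v}=\mathfrak{v}\oplus(\mathrm{Id}-Q)(T^G)$ and the idempotence of $Q+R$ hold regardless of transversality, so that hypothesis can be dropped.
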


\begin{proof}
Since $\M = \cl /(\mathrm{Sim}_0 \times \bbS^1)$ is the image of a submersion with finite-dimensional fibers, we can identify $T_{[\gamma,V]}\M$ with a finite codimension subspace of $T_{(\gamma,V)}\cl$. Thus we can first project onto the vertical or horizontal space of $\cl$ using Proposition \ref{prop:tangent_space_splits}, then project onto the finite codimension subspace.
\end{proof}

Finally, we will need the following lemma. 

\begin{lem}\label{lem:geodesic_n_gons}
Let $[\Phi_1],\ldots,[\Phi_k]$ be distinct elements of $\Gr$. There exists an embedded finite-dimensional totally geodesic Grassmannian which contains every $[\Phi_j]$.
\end{lem}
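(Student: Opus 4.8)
The plan is to take $\mathcal{W}$ to be the complex span of the given $2$-planes and to exhibit $\mathrm{Gr}_2(\mathcal{W})$ as the required totally geodesic Grassmannian. Set
$$
\mathcal{W} := [\Phi_1] + [\Phi_2] + \cdots + [\Phi_k] \subseteq \mathcal{V},
$$
where each $[\Phi_j]$ is viewed as a complex $2$-dimensional subspace of $\mathcal{V}$. Since each summand is $2$-dimensional, $\mathcal{W}$ is a complex subspace of dimension at most $2k$, and restricting $\left<\cdot,\cdot\right>_{L^2}$ endows it with a Hermitian inner product, making $\mathrm{Gr}_2(\mathcal{W})$ a finite-dimensional complex Grassmannian. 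The inclusion $\mathcal{W} \hookrightarrow \mathcal{V}$ induces a natural isometric embedding $\mathrm{Gr}_2(\mathcal{W}) \hookrightarrow \Gr$, exactly as in the finite-dimensional setting, and by construction each $[\Phi_j]$ is a $2$-plane contained in $\mathcal{W}$, hence $[\Phi_j] \in \mathrm{Gr}_2(\mathcal{W})$. It therefore remains only to prove that $\mathrm{Gr}_2(\mathcal{W})$ is totally geodesic in $\Gr$.

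For this I would show that any geodesic of $\Gr$ whose initial data is tangent to $\mathrm{Gr}_2(\mathcal{W})$ stays in $\mathrm{Gr}_2(\mathcal{W})$. Fix $[\Phi] \in \mathrm{Gr}_2(\mathcal{W})$ together with a tangent vector to $\mathrm{Gr}_2(\mathcal{W})$ there. Lift $[\Phi]$ to $\Phi=(\phi,\psi) \in \mathrm{St}_2(\mathcal{W}) \subseteq \St$ and represent the tangent vector by its horizontal lift $\delta\Phi=(\delta\phi,\delta\psi) \in T_\Phi^{hor}\St$; because the vector is tangent to $\mathrm{Gr}_2(\mathcal{W})$, both components of $\delta\Phi$ lie in $\mathcal{W}$, i.e. $\delta\Phi \in \mathcal{W}^2$. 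Consequently the subspace $\mathrm{span}\{\phi, \psi, \delta\phi, \delta\psi\}$ appearing in the corollary to Proposition \ref{prop:geodesic_equation_stiefel} is contained in $\mathcal{W}$, so that corollary guarantees that the $\St$-geodesic with initial data $(\Phi, \delta\Phi)$ remains in $\mathrm{St}_2(\mathcal{W})$. Moreover, a geodesic of $\St$ with horizontal initial velocity stays horizontal (as observed in the discussion preceding Proposition \ref{prop:exponential_map}), so this geodesic descends to a geodesic of $\Gr$ lying entirely in $\mathrm{Gr}_2(\mathcal{W})$. This is precisely total geodesy.

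The main obstacle is the verification underlying the second paragraph: one must confirm both that the horizontal lift of a $\mathrm{Gr}_2(\mathcal{W})$-tangent vector genuinely lands in $\mathcal{W}^2$ and that horizontality persists along the whole geodesic rather than only at the initial time. The first point follows from the identification $T_\Phi^{hor}\mathrm{St}_2(\mathcal{W}) = T_\Phi^{hor}\St \cap \mathcal{W}^2$, which holds because the $\mathrm{U}(2)$-orbit of $\Phi$ lies in $\mathrm{St}_2(\mathcal{W})$. The second is already supplied by the minimization argument recorded before Proposition \ref{prop:exponential_map}, and can alternatively be read off the explicit solution (\ref{eqn:geodesic_solution}), whose matrix exponentials merely recombine the columns $\Phi$ and $\delta\Phi$ and hence keep the geodesic valued in $\mathcal{W}$. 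With these checks in place the lemma follows at once, since finite-dimensionality of $\mathrm{Gr}_2(\mathcal{W})$ and the containment of every $[\Phi_j]$ are immediate from the definition of $\mathcal{W}$.
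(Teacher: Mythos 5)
Your proof is correct and follows essentially the same strategy as the paper: choose a finite-dimensional subspace $\mathcal{W}$ containing all the planes and observe that the explicit geodesics of $\Gr$ never leave $\mathrm{Gr}_2(\mathcal{W})$. The only real difference is cosmetic --- the paper reads total geodesy off the two-point (Jordan-angle) geodesic formula of Section \ref{sec:explicit_geodesics}, while you read it off the initial-value description via Proposition \ref{prop:geodesic_equation_stiefel} and its corollary, supplying more careful detail on the horizontal-lift bookkeeping along the way.
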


\begin{proof}
Let $\mathcal{W}$ be a finite-dimensional linear subspace of $\mathcal{V}$ which contains all planes $[\Phi_j]$. By the explicit formula for the geodesics of $\Gr$ in Section \ref{sec:explicit_geodesics}, we see that the embedded finite-dimensional Grassmannian $\mathrm{Gr}_2(\mathcal{W}) \subset \Gr$ is totally geodesic.
\end{proof}

\begin{thm}\label{thm:sectional_curvature}
The spaces $\M$, $\M/\mathrm{Diff}^+_0(S^1)$ and $\M^f/\Dif$ have nonnegative sectional curvatures with respect to the metrics induced by $g^\op$.
\end{thm}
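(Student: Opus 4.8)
The plan is to reduce everything to the nonnegative curvature of finite-dimensional complex Grassmannians and then propagate that property through two Riemannian submersions by means of O'Neill's formula. By Theorem \ref{thm:grassmannian} each component of $\M$ is isometric to an open submanifold of the Grassmannian $\Gr$ (with $\mathcal{V}=\mathcal{L}\C$ or $\mathcal{A}\C$), and sectional curvature is a pointwise, $2$-plane invariant, so it suffices to show that $\Gr$ is nonnegatively curved and then that this descends under the quotient maps $\M \to \M/\mathrm{Diff}_0^+(S^1)$ and $\M^f \to \M^f/\Dif$.

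First I would treat $\Gr$ itself. Fix $[\Phi]\in\Gr$ and a $2$-plane $\sigma\subset T_{[\Phi]}\Gr$, represented by horizontal vectors $\delta\Phi_1=(\delta\phi_1,\delta\psi_1)$ and $\delta\Phi_2=(\delta\phi_2,\delta\psi_2)$ in $T^{hor}_\Phi\St$. Set $\mathcal{W}:=\mathrm{span}_\C\{\phi,\psi,\delta\phi_1,\delta\psi_1,\delta\phi_2,\delta\psi_2\}$, a subspace of $\mathcal{V}$ of complex dimension at most $6$. As in the proof of Lemma \ref{lem:geodesic_n_gons}, the explicit Neretin geodesic formulas of Section \ref{sec:explicit_geodesics} show that the finite-dimensional Grassmannian $\mathrm{Gr}_2(\mathcal{W})\subset\Gr$ is totally geodesic, and by construction it contains $[\Phi]$ with $\sigma\subset T_{[\Phi]}\mathrm{Gr}_2(\mathcal{W})$. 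Since the embedding is totally geodesic, the Gauss equation forces $K_\Gr(\sigma)$ to equal the intrinsic sectional curvature of $\mathrm{Gr}_2(\mathcal{W})$ along $\sigma$. Endowed with the restriction of $\langle\cdot,\cdot\rangle_{L^2}$, the space $\mathcal{W}\cong\C^m$ is an ordinary finite-dimensional Hermitian vector space, so $\mathrm{Gr}_2(\mathcal{W})$ is a standard complex Grassmannian with its $\mathrm{U}(m)$-invariant Kähler metric, a compact Hermitian symmetric space of compact type, hence nonnegatively curved; therefore $K_\Gr(\sigma)\ge 0$. As $\sigma$ and $[\Phi]$ were arbitrary, $\Gr$, and hence $\M$, is nonnegatively curved.

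Next I would pass to the quotients using O'Neill's formula \cite{oneill}, in the stated form that nonnegative curvature is preserved by Riemannian submersions. The essential point is that the quotient maps are genuine Riemannian submersions in this Fréchet setting, which is exactly what Corollary \ref{cor:projection_diff_M} (built on Proposition \ref{prop:tangent_space_splits}) supplies, since it produces well-defined orthogonal projections onto the vertical and horizontal tangent spaces of the $\mathrm{Diff}_0^+(S^1)$-action. Because $g^\op$ is $\Dif$-invariant it descends to the base, the horizontal lift is an isometry onto the horizontal subspace, and thus $\M\to\M/\mathrm{Diff}_0^+(S^1)$ is a Riemannian submersion whose base is nonnegatively curved. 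Finally, $\M^f/\Dif$ is obtained from the nonnegatively curved $\M^f/\mathrm{Diff}_0^+(S^1)$ (an open submanifold of $\M/\mathrm{Diff}_0^+(S^1)$) as the quotient by the residual free, isometric $S^1\cong\Dif/\mathrm{Diff}_0^+(S^1)$ action; since these orbits are one-dimensional the horizontal projection is automatic, the quotient map is a Riemannian submersion, and O'Neill's corollary yields nonnegative curvature once more.

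The main obstacle is not the curvature computation, which becomes classical once localized on finite-dimensional totally geodesic Grassmannians, but rather justifying that O'Neill's machinery applies in infinite dimensions. In a Fréchet manifold the orthogonal complement of the vertical distribution need not exist or be complemented, so the crux is the existence of horizontal projections for the $\mathrm{Diff}_0^+(S^1)$-action; this is precisely why the invertibility of the operator $L$ in Lemma \ref{lem:invertible_operator} and the splitting of Proposition \ref{prop:tangent_space_splits} were established, and with those in hand the submersion steps are routine. A secondary point to check is that the restricted metric on $\mathrm{Gr}_2(\mathcal{W})$ is indeed a scalar multiple of the symmetric metric, which follows from irreducibility of the isotropy representation of the complex Grassmannian.
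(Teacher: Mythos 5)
Your proposal is correct and follows essentially the same route as the paper: localize the curvature computation to a totally geodesic, isometrically embedded finite-dimensional complex Grassmannian (which is nonnegatively curved as a compact-type symmetric space), then push nonnegative curvature through the quotient maps via O'Neill's formula, with Corollary \ref{cor:projection_diff_M} supplying the horizontal projections needed to make those maps Riemannian submersions. The only difference is cosmetic: you obtain the finite-dimensional Grassmannian directly as $\mathrm{Gr}_2(\mathcal{W})$ with $\mathcal{W}$ the span of $\Phi$ and the two tangent vectors, whereas the paper first produces nearby points via the exponential map and then invokes Lemma \ref{lem:geodesic_n_gons}; your shortcut is, if anything, slightly cleaner.
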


\begin{proof}
By Theorem \ref{thm:grassmannian}, $\M$ is locally isometric to $\mathrm{Gr}_2(\mathcal{V})$, and it suffices to bound the curvature of the Grassmannian to prove that $\M$ is nonnegatively curved. This can be done using classical methods (cf.\ \cite{younes}), but we will give a proof which emphasizes the role of the explicit geodesics of $\M$. Assuming that $\M$ has non-negative sectional curvature, it follows by O'Neill's formula that $\M/\mathrm{Diff}^+_0(S^1)$ does as well. Indeed, the metric $g^\op$ is reparameterization-invariant by construction, so the quotient map $\M \rightarrow \M/\mathrm{Diff}^+_0(S^1)$ is a Riemannian submersion and Corollary \ref{cor:projection_diff_M} shows that O'Neill's formula can be applied. O'Neill's formula also implies that $\M^f/\Dif$ is nonegatively curved as it is identified with the quotient $\left(\M^f/\mathrm{Diff}_0^+(S^1)\right)/S^1$, which is the image of a submersion since we are restricting to the free part $\M^f$.

Let $[\Phi] \in \Gr$  and let $\delta \Phi_1$, $\delta \Phi_2 \in T_{[\Phi]} \Gr$ be linearly independent and assume without loss of generality that each vector is $L^2$-normalized. Using Proposition \ref{prop:exponential_map}, we can choose a point $[\Phi_j]$ along the geodesic through $[\Phi]$ with velocity vector $\delta \Phi_j$ for $j=1,2$. These points can be chosen to be distinct, by the explicit description of the exponential map given in Proposition \ref{prop:geodesic_equation_stiefel}.  Next we use Corollary \ref{lem:geodesic_n_gons} to choose a totally geodesic isometrically embedded finite-dimensional Grassmannian containing $[\Phi]$, $[\Phi_1]$ and $[\Phi_2]$. In particular, this Grassmannian contains $[\Phi]$ and contains each $\delta\Phi_j$ in its tangent space. Thus the sectional curvatures of $\Gr$ and the finite dimensional Grassmannian agree at that point and plane. Finite-dimensional Grassmannians are well known to be nonnegatively curved as they are symmetric spaces of compact type, so this proves that the curvature of $\M$ is nonnegative.

\end{proof}

%%%%%%%%%%%%%%%%%%%%%%%%%%%%%%%%%%%%%%%%%%%%%%%%%%%%%%%%%%%%%%%%%%%
\section*{Acknowledgment}
%%%%%%%%%%%%%%%%%%%%%%%%%%%%%%%%%%%%%%%%%%%%%%%%%%%%%%%%%%%%%%%%%%%

Most of the work in this paper was part of my Phd.\ dissertation. I am extremely grateful to my advisor Jason Cantarella for his guidance in developing these ideas. This work would not have been possible without his help.

\end{document}